\newcommand\Ccancel[2][black]{\renewcommand\CancelColor{\color{#1}}\cancel{#2}}
\newtheorem{theorem}{Theorem}[section]
\newtheorem{lemma}{Lemma}[section]
\newtheorem{corollary}{Corollary}[section]
\newtheorem{conjecture}{Conjecture}[section]
\newtheorem{proposition}{Proposition}[section]
\theoremstyle{definition}
\newtheorem{definition}[theorem]{Definition}
\newtheorem{remark}{Remark}[section]
\newtheorem{thmx}{Theorem} 
\newtheorem{corox}{Corollary}[thmx] 
\title{Partially Hyperbolic Geodesic flow via conformal deformation}
\author{Ygor de Jesus\footnote{Ygor de Jesus was financially supported by Fundação de Amparo à Pesquisa do Estado de São Paulo (FAPESP - Brazil) through process number 2021/02913-0 and 2023/05100-5.}, Luis Pedro Piñeyrúa\footnote{Luis Pedro Piñeyrúa was supported by Instituto Serrapilheira, grant F0265 “Jangada Dinâmica: Impulsionando Sistemas Dinâmicos na Região Nordeste”.}, Sergio Romaña\footnote{ Sergio Roma\~na was supported by  “Bolsa Jovem Cientista do Nosso Estado No. E-26/201.432/2022, Brazil, NNSFC 12071202, and NNSFC 12161141002 from China.}}
\date{}
\newcommand{\til}{\widetilde}
\newcommand{\wh}{\widehat}
\newcommand{\cPH}{{\mathcal{PH}}}
\newcommand{\R}{\mathbb{R}}
\newcommand{\mc}{\mathcal}
\newcommand{\mrm}{\mathrm}
\newcommand{\norm}[1]{\left\lVert#1\right\rVert}
\newcommand{\ve}{\varepsilon}
\newcommand{\rarrow}{\rightarrow}
\newcommand*{\Scale}[2][4]{\scalebox{#1}{$#2$}}%
\begin{document}
\maketitle
\begin{abstract}
   This paper presents a new construction of non-Anosov Partially Hyperbolic Geodesic flows. Our construction is closely related to the construction made in \cite{CARNEIRO&PUJALS14}, the novelty is the use of conformal deformations to produce the examples. Some of the necessary conditions appear more naturally and are easier to check. Besides that, we could enumerate the conditions required to produce partially hyperbolic geodesic flow examples. We show how to produce examples with metrics that are non-positively curved and with a finner analysis we can prove ergodicity for the Liouville measure and uniqueness of the measure of maximal entropy. These examples lie on the boundary of Anosov metrics, allowing us to also produce metrics with partially hyperbolic geodesic flows and conjugate points.
 
\end{abstract}
\tableofcontents

\section{Introduction}
Given a closed and connected Riemannian manifold $(M,g)$, one of the oldest open problems in the area of dynamical systems and ergodic theory is determining conditions that guarantee ergodicity of the geodesic flow with respect to the \textit{Liouville measure}. A classical result states that the geodesic flow is of Anosov type when all the sectional curvatures are strictly negative. This connection is indeed the first model of Anosov systems and the origin of the well-known technique called \textit{Hopf's Argument}, which is very useful for obtaining ergodicity. It is still not a completely solved question whether the geodesic flow for a surface of genus $g\geq 2$ and non-positive curvature is ergodic with respect to the \textit{Liouville measure}. However, Pesin obtained great advances in \cite{PESIN1977geodesic}. In this work, Pesin proves that if the action of the geodesic flows is restricted to vectors for which the curvature is negative along its underlying geodesic, then this flow is conjugated to a Bernoulli flow, and in particular, it is ergodic. 

The analogous question in higher dimensions is determining the ergodicity for non-positively curved Riemannian metrics that are rank one. The rank of a tangent vector $v\in T_xM$ is the dimension of the space of parallel orthogonal \textit{Jacobi Fields} along the geodesic $\gamma_{(x,v)}(t)$ and the rank of the manifold $(M,g)$ is defined as the infimum of the rank of the unitary tangent vector. Celebrated works from the 80's such as \cite{BALMANN&BRIN1982ergodicity} and \cite{BURNS1983hyperbolic} explored the relation between the ergodicity of the geodesic flow and the rank of the metric. They concluded that the restriction of the geodesic flow to the set of rank one vectors is also Bernoulli. 

It is natural to question if non-positivity of the curvature is essential to obtain ergodicity for the geodesic flow. In \cite{BURNS1989S2} and \cite{DONNAY2006ERGODICITY} a negative answer to this question is given. They obtain examples of ergodic geodesic flows in surfaces with some positive curvature. It is worth mentioning that in their case the presence of negative curvature is still essential to obtaining ergodicity. It is also not true that the Anosov property for the geodesic flow must imply negative curvature, indeed in \cite{EBERLEIN73} Eberlein states several equivalent conditions to the Anosov property. Essentially negative curvature must appear along any geodesic, but it does not prevent positive curvature from appearing. In fact, in \cite{GULLIVER1975}, Gulliver constructs examples of Anosov geodesic flows such that the underlying metric has some positive sectional curvatures.

The time$-1$ map of an Anosov geodesic flow is one of the most classical examples of partially hyperbolic systems. By partial hyperbolicity we mean that there is a continuous splitting of the tangent bundle $TM=E^s\oplus E^c\oplus E^u$ into three non-trivial invariant subbundles, where $E^s$ is uniformly contracted for the future, $E^u$ is uniformly contract for the past and $E^c$ has an intermediate behavior, i.e. it does not contracts nor expands as much as the other two. It is a famous conjecture by Pugh and Shub the genericity of ergodic systems inside this class (check \cite{RHTU2011CRITERIA}, \cite{BURNS&RH2008} and \cite{BURNS&WILKINSON2010}). According to the Pugh and Shub program, this main conjecture may be a consequence of another two conjectures: accessibility is dense among partially hyperbolic systems and accessibility implies ergodicity.

It is well known that geodesic flows are examples of contact flows, i.e. the geodesic vector field is the Reeb flow of the contact form induced by the Riemannian metric. In \cite{CARNEIRO&PUJALS14} Carneiro and Pujals constructed the first class of examples of partially hyperbolic geodesic flows and in \cite{FISHER&HASSELBLATT2022accessibility} Fisher and Hasselblatt proved that is possible to perturb any contact form for which the Reeb flow is partially hyperbolic to make it accessible. However, there is a lack of examples in this class of systems and one of the consequences of the present work is to present new examples and the study of new techniques to produce them. Besides the genericity of the ergodic property in some classes inside the partially hyperbolic setting, it is a delicate problem to produce examples of ergodic systems that are not of Anosov type. It is even more delicate to produce such examples in the class of geodesic flows. In the two-dimensional setting, there are simple constructions one can think of, for example, the surface of revolution obtained by rotating the graph of $f(x)=x^4+1$, defined in some interval $[-a,a]$, around the $x-$axis and then gluing two negatively curved surfaces to this neck. The obtained surface is non-positively curved, and moreover it has negative Gaussian curvature besides a central closed geodesic at $x=0$ for which the Gaussian curvature is identically zero. Such an example has a Bernoulli geodesic flow that is not of Anosov type by the previously mentioned works of Pesin and Eberlein. This example also presents more interesting dynamical properties, for example the decay of correlations studied in \cite{LIMA&MATHEUS2024}. One realizes very fast that this construction of examples by rotating the graph of a function cannot be generalized to higher dimensions to obtain non-positively curved manifolds with a single geodesic with vanishing sectional curvatures, hence different techniques and approaches are needed.

In \cite{RUGGIERO1991creation} Ruggiero proved that the $C^2-$interior of the set of Riemannian metrics without conjugate points, say $\mathcal{NC}(M)$, coincides with the set of Riemannian metrics for which the geodesic flow is of Anosov type, say $\mathcal{A}(M)$. It is still not well understood the set of metrics on the boundary of $\mathcal{NC}(M)$ nor the relation with the expansivity of the geodesic flow. We say that a continuous flow is expansive if two different orbits eventually grow apart. It is known in the two-dimensional setting that expansivity of the geodesic flow implies the absence of conjugate points (see \cite{PATERNAIN1993expansive}), however, it is a \textit{Conjecture} attributed to Ricardo Mañe that this result should be true for higher dimension as well. Notice that the converse is not true, since we can consider a piece of flat cylinder and glue two negatively curved surfaces to the boundary of this flat neck. For this construction we have that for every $\varepsilon>0$, there exist two closed geodesics $\gamma_1(t)$ and $\gamma_2(t)$ in the cylinder that are $\varepsilon-$close for every $t\in \mathbb{R}$. This phenomenon is detected by the \textit{Flat Strip Theorem} which states that if the geodesic flow is not expansive, then for every $\varepsilon>0$ there exist two distinct geodesics $\gamma_1$ and $\gamma_2$ such that $d(\gamma_1(t),\gamma_2(t))<\ve$, for all $t\in \R$, thus we can find a a flat strip on the universal cover, i.e. an isometric copy of an $[-\delta,\delta]\times \R$. Therefore, to produce expansive geodesic flows that are not of Anosov type it is necessary to have some control on the amount of zero curvature in the non-positively curved setting. It is interesting to remark that in \cite{RUGGIERO1997expansive} Ruggiero proves that expansive geodesic flows with no conjugated points have density of periodic points and a local product structure, which implies they are topologically transitive, i.e. they admit a dense forward orbit. Hence, examples of such geodesic flows are rich from the geometric and dynamical point of view.

Just like the ergodicity in the non-Anosov case, there is a shortage of higher dimensional examples of expansive geodesic flows admitting some zero sectional curvatures. A natural attempt would be starting with two negatively curved manifolds, say $(M_1,g_1)$ and $(M_2,g_2)$, and then consider the Riemannian product manifold $(M_1\times M_2,g_1+g_2)$. In this case, the product metric admits several vanishing sectional curvatures and it is not even partially hyperbolic as proved in \cite{CARNEIRO&PUJALS14}. Therefore it is crucial to develop techniques of metric deformation. By this, we mean the construction of examples by changing the curvature of a negatively curved metric in a controllable way. 
In this article, we explore the use of conformal deformation in order to produce the above-mentioned examples. We say that two Riemannian metrics $g$ and $g^*$ are $C^k-$conformally related if there exists a positive function $\phi:M\to \R_{>0}$ of class $C^k$ such that $g^*=\phi g$. It is clear that the conformality of Riemannian metrics is an equivalence relation, thus we can consider the conformal class of a metric. In \cite{KATOK1982ENTROPY}, \cite{KATOK1988CONFORMAL} and \cite{BARTHELME&ERCHENKO2021}, Katok, Barthelmé, and Erchenko study rigidity and flexibility on the dynamics inside the conformal class of a Riemannian metric, namely they study the variation of the entropy and length spectra for some conformally related metrics. This technique of deformation was also used by Ruggiero in the previously mentioned paper \cite{RUGGIERO1991creation}. We use the analysis of conformal related Riemannian metrics to produce the following result:

\begin{thmx}
\label{there exist conformal partially hyperbolic}
Let $(M,g)$ be a compact Kähler manifold of holomorphic curvature $-1$ or a compact locally symmetric quaternionic Kähler manifold of negative curvature. Then there exists a conformal metric $\til g=\phi g$ such that the geodesic flow $\til g_t$ is partially hyperbolic and not Anosov.
\end{thmx}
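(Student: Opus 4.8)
The plan is to build the central bundle out of the distinguished, flow-invariant direction supplied by the symmetric structure, and to use the conformal factor only to switch off the curvature in that direction along a single closed geodesic. First I would recall that for a negatively curved metric the geodesic flow is Anosov and that its invariant splitting is governed by the normal Jacobi equation $J'' + R(J,\gamma')\gamma' = 0$: the stable and unstable rates along $\gamma$ are the exponentials of the eigenvalues of the symmetric operator $X \mapsto R(X,\gamma')\gamma'$ on $\gamma'^{\perp}$. For $(M,g)$ of holomorphic curvature $-1$ this operator has, along every unit geodesic, exactly two eigenvalues: $-1$ on the line $\R J\gamma'$ and $-\tfrac14$ on the $(2n-2)$-dimensional totally real complement; in the quaternionic case the maximally curved eigenspace is the three-dimensional span of $I\gamma', J\gamma', K\gamma'$. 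Because $J$ (resp. $I,J,K$) is parallel and $\gamma'$ is parallel, the distinguished eigendistribution is parallel along every geodesic, so the corresponding Jacobi fields are especially rigid --- this is the structural feature I want to exploit.

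Next I would perform the deformation. Using the classical transformation law of the curvature tensor under $\til g = \phi g$, which writes the new sectional curvature of a plane as $\phi^{-1}$ times the old curvature corrected by the Hessian and gradient of $\log\phi$ in the directions spanning the plane, I would select $\phi$ so that three conditions hold: the curvature of the distinguished holomorphic (resp. quaternionic) planes is driven to zero along one fixed closed geodesic $\gamma_0$ and stays weakly negative elsewhere; the totally real planes keep uniformly strictly negative curvature everywhere; and $\til g$ remains globally non-positively curved. These are exactly the conditions enumerated earlier in the paper, and the homogeneity of the symmetric metric reduces the search to a single radial profile supported in a thin tube around $\gamma_0$, which makes the curvature bookkeeping explicit. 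With such a $\phi$ the distinguished Jacobi fields become neutral, defining a continuous center bundle $E^{c}$ that contains the flow direction, while the totally real Jacobi fields retain their exponential behavior and furnish $E^{ss}$ and $E^{uu}$; domination is then immediate, since the (at most polynomial) central growth is strictly dominated by the uniform exponential rates on $E^{ss}\oplus E^{uu}$.

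Finally, non-Anosovness is read off from $\gamma_0$: there the vanishing curvature makes the parallel field $J\gamma_0'$ a bounded normal Jacobi field, which by Eberlein's criterion \cite{EBERLEIN73} is incompatible with the Anosov property; equivalently, the neutral direction destroys the uniform hyperbolic estimate on $E^{c}$. The main obstacle, I expect, is the tension between the conformal change and the geodesics it moves: since the geodesics of $\til g = \phi g$ are not those of $g$, I must ensure both that $\gamma_0$ is still a $\til g$-geodesic and that the distinguished eigendirection of the new curvature operator stays aligned with $J\gamma'$ (where the conformal metric is no longer Kähler). I would control this by demanding that $\phi$ be constant to high order along $\gamma_0$, so that the geodesic, its parallel transport, and the splitting of the curvature operator are preserved on $\gamma_0$, while the genuinely delicate verification is that the resulting three-way splitting is continuous and dominated on all of the unit tangent bundle, not merely along $\gamma_0$.
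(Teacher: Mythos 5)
Your setup (conformal factor supported in a tube around a closed geodesic, constancy of $\phi$ to high order along $\gamma_0$ so that $\gamma_0$ stays a $\til g$-geodesic, and Eberlein's criterion to read off non-Anosovness from the resulting parallel Jacobi field) matches the paper. But the direction you choose to deform is the wrong one, and this breaks the argument at the domination step. You propose to drive to zero the curvature of the \emph{distinguished} planes, i.e.\ the eigenspace $A(x,v)=\mathrm{span}\{Jv\}$ (resp.\ $\mathrm{span}\{Iv,Jv,Kv\}$) where $K=-1$, and to let the totally real directions, where $K=-\frac14$, furnish $E^{ss}$ and $E^{uu}$. The bundles of a partially hyperbolic splitting are global objects over $T^1M$, and outside the deformation tube the metric is untouched: there the $A$-directions produce Jacobi fields growing/decaying at rate $e^{\pm t}$, while the totally real directions give $e^{\pm t/2}$. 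So along any orbit that avoids the tube (for instance, any closed geodesic disjoint from it), your proposed center bundle contains vectors expanding at rate $1$ while your proposed $E^{uu}$ expands only at rate $\frac12$; the required inequality $\|d\til g_t|_{E^c}\|\le Ce^{-\mu t}$ with $\lambda<\mu\le 0$ against $\|d\til g_{-t}|_{E^{uu}}\|\le Ce^{\lambda t}$ is violated, domination fails, and correspondingly no cone family around the totally real directions can be invariant (vectors get attracted toward the more expanding $A$-directions and leave the cone). The claim that ``domination is then immediate'' is only checked along $\gamma_0$, where your center is indeed neutral; it is false everywhere else.

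The paper avoids this by doing the opposite: the deformation kills the curvature in \emph{one} direction $e_s$ chosen inside the weak space $B$ (so only $-\frac14\mapsto 0$, via $-\frac12\partial_{ss}^2h(0)=\frac14$), leaving the strongly curved space $A$ untouched. Then $E^{ss}\oplus E^{uu}$ is built from the $A$-directions (rates $\pm1$ everywhere), and the center collects the flow direction, the neutral $s$-direction, and the remaining $B$-directions (rates in $[-\frac12,\frac12]$), so domination holds with the gap between $\frac12$ and $1$; the cone-invariance computation is carried out around $P_A^{u,s}$ and the symplectic Lemma \ref{dominated splitting in symplectic} upgrades the dominated splitting to partial hyperbolicity. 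A secondary point: your third requirement, that $\til g$ stay globally non-positively curved, is not achievable by this conformal construction and is not claimed in Theorem \ref{there exist conformal partially hyperbolic} — the paper only controls the possible positive curvature to be $O(\ve)$; non-positivity is obtained in Theorem \ref{ergodic partially hyperbolic geodesic flow} by a different, non-conformal deformation.
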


The Riemannian manifolds considered in this result are contained in a larger class of interesting Riemannian manifolds called \textit{Locally Symmetric Manifolds} (it is also common to just say locally symmetric metric since it is as metric property). By definition, a metric is said to be \textit{Locally Symmetric} if its curvature tensor is parallel with respect to the \textit{Levi-Civita connection}, i.e. $\nabla R\equiv 0$. One of the interesting properties of this class of metrics, from the geometry point of view, is the fact that the sectional curvatures $K$ are not affected by parallel transport. This means that given two tangent vectors $X,Y$, then if $\tilde{X}(t)$ and $\tilde{Y}(t)$ denote their parallel transport along a geodesic, then $K(\tilde{X},\tilde{Y})(t)=K(\tilde{X},\tilde{Y})(0)=K(X,Y)$. From the dynamical point of view, since we are considering negatively curved metrics, the correspondent geodesic flow is of Anosov type. However, it also has a partially hyperbolic structure on the contact structure, i.e. the splitting occurs not only into two invariant subbundles but in four invariant subbundles in a way that we have stronger and weaker behavior (see Lemma \ref{four invariant bundles}). Besides this splitting property, the class of locally symmetric Riemannian metrics also has an interesting behavior with respect to the topological entropy of its geodesic flow. In \cite{KATOK1982ENTROPY}, Katok proved that for surfaces of genus bigger or equal than two we have a fascinating comparison between the measure entropy for the Liouville measure and topological entropy of different Riemannian metrics. More precisely, let $g$ be any negatively curved Riemannian metric and $g^*$ be a Riemannian metric with constant negative curvature. Supose in addition that $g$ and $g^*$ determine the same area, then $h_{\mathrm{Liou}}(g)\leq h_{\mathrm{Liou}}(g^*)$ and $h_{\mathrm{top}}(g)\geq h_{\mathrm{top}}(g^*)$. The inequalities are strict when $g$ has non-constant negative curvature. This result led Katok to state the following conjecture for negatively curved Riemannian metrics

\begin{conjecture}[\cite{KATOK1982ENTROPY}]
\label{conjecture katok 1}
    One has $h_{\mathrm{top}}(g)=h_{\mathrm{Liou}}(g)$ on a compact manifold if, and only if, $g$ is a locally symmetric Riemannian metric.
\end{conjecture}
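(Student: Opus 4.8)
The plan is to recast the equality $h_{\mathrm{top}}(g)=h_{\mathrm{Liou}}(g)$ as a coincidence of invariant measures and then attack the two implications separately. By the variational principle, $h_{\mathrm{top}}(g)=\sup_\mu h_\mu$, where the supremum runs over all flow-invariant Borel probability measures on the unit tangent bundle; since the normalized Liouville measure $\mu_{\mathrm{Liou}}$ is one such measure, one always has $h_{\mathrm{Liou}}(g)\leq h_{\mathrm{top}}(g)$. Hence equality holds if and only if $\mu_{\mathrm{Liou}}$ is a measure of maximal entropy. Restricting, as Katok does, to the negatively curved setting, the geodesic flow is Anosov and therefore carries a unique measure of maximal entropy, the Bowen--Margulis measure $\mu_{\mathrm{BM}}$. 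The problem thus reduces to proving the equivalence: $\mu_{\mathrm{Liou}}=\mu_{\mathrm{BM}}$ if and only if $g$ is locally symmetric.

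For the ``if'' direction I would exploit homogeneity. When $g$ is locally symmetric of negative curvature, the unit tangent bundle is (locally) a homogeneous space and the geodesic flow is the action of a one-parameter diagonal subgroup; because $\nabla R\equiv 0$, the stable and unstable Jacobians are governed by the root-space decomposition and are constant along orbits. Under this constancy the logarithm of the unstable Jacobian is cohomologous to a constant, and a criterion in the spirit of Ledrappier's work then forces $\mu_{\mathrm{Liou}}=\mu_{\mathrm{BM}}$. Equivalently, both measures are invariant under the full isometry group of the symmetric space, and by uniqueness within each class they must agree. This direction is essentially classical.

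The ``only if'' direction is where the genuine difficulty, and in fact the main obstacle, resides: one must show that the coincidence of the SRB and maximal-entropy measures is rigid enough to force $\nabla R\equiv 0$. The natural strategy is to write the gap $h_{\mathrm{top}}-h_{\mathrm{Liou}}$ as a nonnegative quantity, using the Pesin formula $h_{\mathrm{Liou}}=\int \log J^u\, d\mu_{\mathrm{Liou}}$ (with $J^u$ the unstable Jacobian of the time-one map) together with a pressure estimate for $\mu_{\mathrm{BM}}$, and then to analyze the equality case. On surfaces this is exactly how Katok concludes, combining the variational inequalities with the Gauss--Bonnet constraint to pin down constant curvature, so I would first reproduce that argument to recover the genus $\geq 2$ case as a consistency check. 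In higher dimensions, however, the equality case does not yield $\nabla R\equiv 0$ by any elementary computation, and here I would attempt to import the minimal-entropy rigidity machinery of Besson--Courtois--Gallot, which characterizes locally symmetric metrics as the unique minimizers of volume entropy in a fixed homotopy class. The hardest step, and the reason the statement remains a conjecture beyond the surface case and certain homogeneous examples, is bridging the gap between the dynamically weak hypothesis ``$\mu_{\mathrm{Liou}}=\mu_{\mathrm{BM}}$'' and the stronger entropy-extremality assumptions those rigidity theorems actually require.
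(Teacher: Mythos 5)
This statement is not a theorem of the paper: it is Katok's entropy conjecture, recorded by the authors precisely as an open \emph{conjecture}. The paper offers no proof of it and explicitly notes that the only known cases are Katok's own result for metrics conformal to a locally symmetric one (hence, via uniformization, for surfaces of genus at least two). So there is no proof in the paper to compare yours against, and any complete argument here would be a solution to a well-known open problem rather than a reconstruction of the authors' reasoning.

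Your attempt, judged on its own terms, contains correct reductions but a genuine and fatal gap, one you yourself flag. The reductions are fine: by the variational principle $h_{\mathrm{Liou}}(g)\leq h_{\mathrm{top}}(g)$ always, with equality iff the Liouville measure is a measure of maximal entropy; in the negatively curved (Anosov) setting the measure of maximal entropy is unique (Bowen--Margulis), so the conjecture is equivalent to: $\mu_{\mathrm{Liou}}=\mu_{\mathrm{BM}}$ iff $g$ is locally symmetric. The ``if'' direction is indeed classical. The gap is the ``only if'' direction, and your proposed bridge does not exist: Besson--Courtois--Gallot rigidity characterizes locally symmetric metrics as \emph{minimizers of volume entropy among all metrics of fixed volume in a given homotopy class}, i.e.\ it is a statement comparing $g$ against a family of competitor metrics, whereas the hypothesis $\mu_{\mathrm{Liou}}=\mu_{\mathrm{BM}}$ is an intrinsic property of the single metric $g$ and provides no comparison with any other metric. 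No known argument converts coincidence of these two measures into the entropy-extremality input that BCG requires; Katok's surface proof circumvents this by using the fact that every metric on a surface is conformal to a constant-curvature one, a tool with no higher-dimensional analogue outside the conformal class of a locally symmetric metric. This missing step is exactly why the statement remains a conjecture, so your text should be presented as a strategy discussion, not as a proof.
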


Conjecture \ref{conjecture katok 1} was proved by Katok himself in \cite{KATOK1982ENTROPY} for metrics that are conformal to a locally symmetric one. In particular, it implies that the result is true for surfaces. It is known by \cite{KKM91} that locally symmetric metrics are critical points for both $h_{\mathrm{top}}$ and $h_{\mathrm{Liou}}$ when seen as functions from the space of Riemannian metrics. More about this conjecture can be found in \cite{BESSON95}. We are unable to relate the above conjecture to the conformal deformation construction in Theorem \ref{there exist conformal partially hyperbolic} since we do not know if the resulting metric is even non-positively curved, the best we can state is that if some positive curvature is created, then it can be made as small as we want. However, working in a slightly different and less rigid kind of deformation, we are able to produce an example with non-positive curvature. More specifically, 
\begin{thmx}
\label{ergodic partially hyperbolic geodesic flow}
    Let $(M,g)$ be a compact Kähler manifold of holomorphic curvature $-1$ or a compact locally symmetric quaternionic Kähler manifold of negative curvature, then there exists a $C^2-$deformation $\til g$ of the metric $g$ with the following properties
    \begin{enumerate}
        \item $\til g_t$ is partially hyperbolic.
        \item There exists a closed geodesic $\gamma$ with a parallel Jacobi field along it.
        \item The sectional curvatures $\til K$ are all negative outside $\gamma$.
    \end{enumerate}
\end{thmx}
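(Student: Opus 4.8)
The plan is to exploit the rich invariant structure that the locally symmetric hypothesis already supplies, and to destroy the Anosov property by a single, carefully localized deformation while keeping the coarser dominated structure intact. Recall from Lemma \ref{four invariant bundles} that for the locally symmetric metric $g$ the geodesic flow is Anosov and its splitting refines into strong and weak stable/unstable subbundles dictated by the curvature pinching: in the complex (resp.\ quaternionic) Kähler case the holomorphic (resp.\ quaternionic) two-planes carry the most negative curvature $-1$, while the complex-orthogonal directions carry strictly larger, still negative, curvature. Grouping the strong stable and unstable bundles as $E^s$ and $E^u$, and putting the weak stable/unstable bundles together with the flow direction into $E^c$, already exhibits a dominated, hence partially hyperbolic, splitting for $g$ itself; the gap is uniform because it is governed by the fixed ratio between the rate $e^{-t}$ of the $-1$ directions and the slower rates of the weakly curved directions. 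My strategy is therefore not to create the center (it is present from the outset) but to deform $g$ into $\til g$ so that the flow ceases to be Anosov while this dominated splitting persists.

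Second, I would fix a closed geodesic $\gamma$ of $(M,g)$ and work in Fermi coordinates along it, with a parallel orthonormal frame $\{\dot\gamma, V, E_3,\dots\}$ in which $V$ is the distinguished parallel complex (resp.\ quaternionic) direction and $E_3$ is one parallel complex-orthogonal, weakly curved direction. I would choose $\til g$ to equal $g$ outside a thin tube around $\gamma$, to preserve $\gamma$ as a closed geodesic (arranging the perturbation to vanish to first order tangentially along $\gamma$), and to push the sectional curvature $\til K(E_3,\dot\gamma)$ of the weak plane up to $0$ exactly on $\gamma$, keeping it strictly negative in the punctured tube. Since $\til g$ remains non-positively curved on $\gamma$, the vanishing of this sectional curvature forces the full term $\til R(E_3,\dot\gamma)\dot\gamma$ to vanish there (a non-positive symmetric operator with a zero diagonal entry annihilates the corresponding vector); as $E_3$ is parallel, it is then a parallel Jacobi field along $\gamma$, giving item 2. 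By the criterion recalled in the introduction (after Eberlein), a bounded nondecaying Jacobi field rules out the Anosov property, so $\til g_t$ is not Anosov.

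Third, for partial hyperbolicity I would argue that the dominated splitting $E^s\oplus E^c\oplus E^u$ survives the deformation. The key point is that I flatten only a weak direction and leave the strong directions with curvature near $-1$ everywhere, including over $\gamma$; thus $E^s=E^{ss}$ and $E^u=E^{su}$ remain uniformly contracted and expanded, while the flattening merely slows, to neutral over $\gamma$, the behavior inside $E^c=E^{ws}\oplus\langle\text{flow}\rangle\oplus E^{wu}$. Since the strong rate $\sim e^{-t}$ dominates the weak rates, which now lie between $e^{-t/2}$ and the neutral rate $1$, the domination inequalities hold with a uniform gap, giving item 1. Item 3 is built into the construction, the only curvature allowed to reach $0$ being $\til K(E_3,\dot\gamma)$ along $\gamma$.

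Finally, the main obstacle is the curvature bookkeeping that makes items 1 and 3 compatible with item 2. Flattening $\til K(E_3,\dot\gamma)$ from a negative value up to $0$ requires a deformation of definite size, so it is not $C^2$-small and the persistence of domination cannot be invoked by soft robustness alone; I expect instead to need direct cone-field estimates for the perturbed Jacobi equation, controlled by the transformation law of the curvature tensor under the deformation. The hardest point is to guarantee that every two-plane off $\gamma$ — not merely the planes adapted to the frame, but also the oblique planes mixing $E_3$ with the strong and flow directions — retains strictly negative sectional curvature, and that the vanishing locus of curvature is exactly $\gamma$, so that no flat strip appears and the construction stays expansive. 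Handling these off-diagonal curvature terms, using the locally symmetric identities for the unperturbed tensor to absorb the oblique contributions, is where the technical heart of the argument lies.
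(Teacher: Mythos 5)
Your outline follows the same road map as the paper: localize a deformation in a Fermi tube around a closed geodesic $\gamma$, keep $\gamma$ a geodesic, raise the curvature of one weakly curved plane to zero exactly along $\gamma$ so that a parallel Jacobi field appears (killing the Anosov property via Proposition \ref{Eberleins criterion}), and recover partial hyperbolicity by cone-field estimates rather than soft robustness --- which you correctly note is unavailable since the deformation is not $C^2$-small. Your derivation of item 2 from item 3 (a negative semi-definite operator $\til R(\cdot,\gamma')\gamma'$ with a vanishing diagonal entry must annihilate the corresponding vector) is a clean alternative to the paper's direct computation, which exhibits the curvature matrix along $\gamma$ as diagonal with a single zero entry.

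However, there is a genuine gap exactly where you locate ``the technical heart'': the proposal never constructs the deformation, and the existence of a deformation satisfying item 3 is the actual content of the theorem --- asserting that one can ``push $\til K(E_3,\dot\gamma)$ up to $0$ on $\gamma$ while keeping every oblique plane strictly negative off $\gamma$'' assumes what must be proved. The paper's key idea, absent from your plan, is to abandon the genuinely conformal deformation of Theorem \ref{there exist conformal partially hyperbolic} (which can create positive curvature of size $O(\ve)$) and instead multiply only the single component in the geodesic direction, $\til g_{00}=e^h g_{00}$, leaving $\til g_{ij}=g_{ij}$ for $(i,j)\neq(0,0)$, with $h$ the explicit profile of Section \ref{Breaking the Anosov Property} ($t$-independent, $\partial_{ss}^2h(0)=-\frac{1}{2}$). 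With this choice one gets $\til R_{i00i}=R_{i00i}-\frac{1}{2}\partial_{ii}^2h$ along $\gamma$, while off $\gamma$ the correction terms are $-\frac{1}{2}\partial_{ss}^2h\,e^hg_{00}-\frac{1}{2}(\partial_sh)^2e^hg_{00}-e^h\partial_sh\,\partial_sg_{00}$, and the sign bookkeeping reduces, via the Fermi-coordinate Taylor expansion of $g_{00}$, to checking that the one-variable function $p(x_s)=f''(x_s)+x_sf'(x_s)$ attains its global maximum $\frac{1}{4}$ precisely at $x_s=0$. That computation is what guarantees your hardest point --- strict negativity of all planes off $\gamma$, including the oblique ones mixing $e_s$ with the flow direction. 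Likewise, the cone-field estimates you defer are actually carried out in Section \ref{The new geodesic flow is Partially Hyperbolic}: invariance of cones around $P_A^{u,s}$ is proved separately for parallel, almost parallel, and $s$-transversal geodesics, and the resulting dominated splitting is upgraded to partial hyperbolicity through the symplectic structure (Lemma \ref{dominated splitting in symplectic}); the paper then observes these estimates depend only on bounds for the deformed curvature tensor, which hold equally for the modified ($g_{00}$-only) deformation. Without the specific deformation and these two computations, the proposal is a correct plan but not a proof.
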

As a consequence of the construction of Theorem \ref{ergodic partially hyperbolic geodesic flow} we get the following corollary:
\begin{corox}
\label{new geodesic flow has all the good properties}
    There exists a Riemannian manifold $(M,g)$ with no conjugate points such that its geodesic flow is partially hyperbolic, non-Anosov, ergodic for Liouville, mixing, expansive and has a unique measure of maximal entropy.
\end{corox}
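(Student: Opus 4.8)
The plan is to take the metric $\til g$ furnished by Theorem \ref{ergodic partially hyperbolic geodesic flow} and verify the listed properties one at a time, feeding its three structural conclusions into the appropriate geometric and ergodic-theoretic machinery. Two of the properties are essentially immediate. Partial hyperbolicity is exactly item (1) of Theorem \ref{ergodic partially hyperbolic geodesic flow}. For the non-Anosov conclusion I would use item (2): the parallel orthogonal Jacobi field $J$ along the closed geodesic $\gamma$ has constant norm, so it neither grows nor decays along $\gamma$; hence the linearized geodesic flow over the periodic orbit $\dot\gamma$ has a central, non-hyperbolic direction. Since an Anosov flow has every periodic orbit hyperbolic, $\til g_t$ cannot be Anosov.

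Next I would settle the absence of conjugate points and expansivity. The parallel Jacobi field forces $\til K(\dot\gamma,J)\equiv 0$ along $\gamma$, and together with item (3) this shows $\til g$ is non-positively curved, with $\til K<0$ off the single closed geodesic $\gamma$. Non-positivity makes $t\mapsto \|J(t)\|$ convex for every Jacobi field $J$, so no nontrivial Jacobi field can vanish twice and $\til g$ has no conjugate points. For expansivity I would argue by contradiction using the Flat Strip Theorem quoted in the introduction: were $\til g_t$ non-expansive, the universal cover would contain a flat strip, i.e. a totally geodesic isometric copy of $[-\delta,\delta]\times\R$, which is a two-dimensional region on which the relevant sectional curvature vanishes identically. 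But $\til K<0$ everywhere except along the one-dimensional orbit $\gamma$, which cannot contain such a strip; hence no flat strip exists and $\til g_t$ is expansive.

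The remaining assertions are the dynamical ones, and here the real work sits. The decisive observation is that the only unit vectors whose geodesics never enter the region of strictly negative curvature are precisely the tangent vectors to $\gamma$, i.e. the single periodic orbit $\dot\gamma$, a set of zero Liouville measure. Consequently Liouville-almost every vector generates a geodesic that spends time where $\til K<0$, so the regular set of vectors with nonzero Lyapunov exponents has full measure and the flow is non-uniformly hyperbolic there. Invoking Pesin theory and the Hopf argument for the absolutely continuous stable and unstable foliations, as in \cite{PESIN1977geodesic}, \cite{BALMANN&BRIN1982ergodicity} and \cite{BURNS1983hyperbolic}, yields that $\til g_t$ is ergodic and in fact Bernoulli for Liouville, whence mixing follows at once. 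Finally, a vector whose geodesic meets negative curvature in every transverse direction admits no nontrivial orthogonal parallel Jacobi field, so it has rank one; thus $(M,\til g)$ is a rank one non-positively curved manifold and Knieper's uniqueness theorem for the measure of maximal entropy on such manifolds supplies the last property.

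I expect the main obstacle to be the passage from non-uniform hyperbolicity on a full-measure set to genuine ergodicity of the \emph{whole} Liouville measure. One must verify not merely that the regular set has full measure, but that the invariant foliations are absolutely continuous and that the local product structure produces a single ergodic component, so that the degenerate orbit $\gamma$ cannot disconnect the dynamics. Confirming these hypotheses in our construction amounts to checking that the negative curvature off $\gamma$ is quantitatively strong enough to drive the Pesin machinery, and this is precisely where the finer curvature estimates from the proof of Theorem \ref{ergodic partially hyperbolic geodesic flow} must be brought to bear.
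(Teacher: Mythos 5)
Your proposal matches the paper's proof essentially step for step: no conjugate points from non-positive curvature, expansivity via the Flat Strip Theorem (a flat strip cannot fit in a manifold whose curvature vanishes only on one plane along a single closed geodesic), ergodicity and mixing from the rank-one theory of \cite{BALMANN&BRIN1982ergodicity} and \cite{BURNS1983hyperbolic} after observing that the higher-rank vectors form the measure-zero set tangent to $\gamma$, and uniqueness of the measure of maximal entropy from rank one together with Knieper's theorem \cite{KNIEPER1998uniqueness}. The only cosmetic differences are that you exclude the Anosov property via non-hyperbolicity of the periodic orbit $\dot\gamma$ rather than through Eberlein's criterion (Proposition \ref{Eberleins criterion}), and you derive the rank-one property directly instead of citing Theorem 7.2 of \cite{CARNEIRO&PUJALS14}.
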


We highlight the ergodicity for the Liouville measure, which was a question stated in \cite{CARNEIRO&PUJALS14}. It is somehow possible to relate the previous result to the mentioned Katok's conjecture in the sense that we obtain a unique measure of maximal entropy. However, the metrics are non-positively curved and they are not negatively curved. Besides that, we are unable to say if the measure of maximal entropy is the Liouville measure or not. 

In \cite{CARNEIRO&PUJALS14} the authors do not guarantee the existence nor absence of conjugate points in their examples, indeed they use a curve (a straight line) of metrics to prove the existence of a metric with partially hyperbolic geodesic flows and non-conjugate points. We were unable to find a reference that guarantees that the set of metrics for which the geodesic flow is partially hyperbolic is convex or at least path-connected. Thus the previous corollary guarantees by other arguments the absence of conjugate points for the resulting metric. Furthermore, we also can prove the existence of examples with conjugate points in the following way:  recall that Ruggiero proved in \cite{RUGGIERO1991creation} that the $C^2-$interior of the set of Riemannian metrics without conjugate points coincides with the set of Riemannian metrics for which the geodesic flow is of Anosov type. The metrics obtained via Theorem \ref{ergodic partially hyperbolic geodesic flow} are on the boundary of the set of Anosov Riemannian metrics and their geodesic flows are partially hyperbolic. Since partial hyperbolicity is an open condition, it implies that there exists a $C^2-$open set of Riemannian metrics with conjugate points and partially hyperbolic, non-Anosov, geodesic flow. In other words, we obtain
\begin{corox}
There exists a metric $C^2$-close to $\tilde{g}$ that has conjugate points and a partially hyperbolic geodesic flow. Moreover, there exists a $C^2-$open set of Riemannian metrics with conjugate points for which the geodesic flow is partially hyperbolic.
\end{corox}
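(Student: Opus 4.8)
The plan is to combine Ruggiero's description of the $C^2$-interior of the space of metrics without conjugate points with the openness of partial hyperbolicity. First I would record that, by Corollary~\ref{new geodesic flow has all the good properties}, the metric $\tilde{g}$ produced in Theorem~\ref{ergodic partially hyperbolic geodesic flow} has no conjugate points while its geodesic flow $\tilde{g}_t$ is partially hyperbolic and \emph{not} Anosov. Writing $\mathcal{NC}(M)$ for the set of $C^2$ metrics without conjugate points and $\mathcal{A}(M)$ for those with Anosov geodesic flow, Ruggiero's theorem \cite{RUGGIERO1991creation} gives $\mathrm{int}_{C^2}\,\mathcal{NC}(M)=\mathcal{A}(M)$. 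Since $\tilde{g}\in\mathcal{NC}(M)$ but $\tilde{g}\notin\mathcal{A}(M)$, the metric $\tilde{g}$ lies in $\mathcal{NC}(M)\setminus\mathrm{int}_{C^2}\,\mathcal{NC}(M)$, i.e. on the $C^2$-boundary of $\mathcal{NC}(M)$. Consequently every $C^2$-neighborhood of $\tilde{g}$ meets the complement of $\mathcal{NC}(M)$; that is, it contains metrics possessing conjugate points.

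Next I would invoke the openness of partial hyperbolicity. A $C^2$-small perturbation of $g$ alters the Christoffel symbols only slightly, hence changes the geodesic vector field in a $C^1$-small way and moves the geodesic flow to one $C^1$-close to $\tilde{g}_t$; since partial hyperbolicity is a $C^1$-open condition among flows, there is a $C^2$-neighborhood $\mathcal{U}$ of $\tilde{g}$ all of whose metrics have partially hyperbolic geodesic flow. Combining this with the previous paragraph, $\mathcal{U}$ contains a metric $g'$ with conjugate points. Because an Anosov geodesic flow has no conjugate points, such a $g'$ is automatically non-Anosov while being partially hyperbolic, which already yields the first assertion.

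For the second assertion I would show that the set $\mathcal{CP}(M)$ of metrics with conjugate points is itself $C^2$-open, so that $\mathcal{CP}(M)\cap\mathcal{U}$ is the required open set (nonempty by the first part). To prove openness, take $g'\in\mathcal{CP}(M)$ with conjugate points $\gamma(a),\gamma(b)$ along a geodesic $\gamma$; extending to $[a,b+\ve]$, the Morse index theorem makes the index of $\gamma|_{[a,b+\ve]}$ at least one, so the index form is strictly negative on some field $W$ vanishing at the endpoints. Strict negativity of the index form on a fixed $W$ persists under $C^2$-small changes of the metric, because its curvature term depends continuously on $g$; hence every metric $C^2$-close to $g'$ carries a nearby geodesic of positive index and therefore has a conjugate point. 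The only genuinely delicate point is this persistence statement: phrasing it through the strict negativity of the index form (equivalently, the robustness of a positive Morse index) is precisely what avoids the degeneracy that occurs on $\partial\mathcal{NC}(M)$, where a conjugate point can be created tangentially. The openness of partial hyperbolicity I would treat as standard, via structural stability together with the $C^1$-dependence of the geodesic flow on the $2$-jet of the metric.
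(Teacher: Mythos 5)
Your proposal follows essentially the same route as the paper: Corollary~\ref{new geodesic flow has all the good properties} places $\tilde{g}$ in $\mathcal{NC}(M)\setminus\mathcal{A}(M)$, Ruggiero's theorem then forces $\tilde{g}$ onto the $C^2$-boundary of the set of metrics without conjugate points, and the $C^1$-openness of partial hyperbolicity (together with the $C^1$-dependence of the geodesic flow on the $2$-jet of the metric) produces a $C^2$-neighborhood of partially hyperbolic metrics that must meet the metrics with conjugate points. The only difference is that you explicitly verify, via strict negativity of the Morse index form on a fixed field $W$, that possessing conjugate points is a $C^2$-open condition --- a standard fact the paper uses implicitly in passing from ``such metrics exist near $\tilde{g}$'' to ``there is a $C^2$-open set of such metrics'' --- so your write-up is, if anything, more complete on that point.
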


The paper is structured as follows: in Section \ref{Preliminaries} we present the necessary background from Riemannian geometry, partially hyperbolic dynamics, and the dynamics for the geodesic flow. For some elementary computations within Riemannian geometry, we were unable to find a reference, so we provided short proofs for completeness. In Section \ref{Breaking the Anosov Property} we show how to use conformal deformations in order to destroy the Anosov property of the geodesic flow. The construction consists of deforming the initial metric $g$ by multiplying it by a conformal factor supported in a tubular neighborhood of closed geodesic, which we will mention as \textit{central geodesic}. We investigate the necessary conditions on the multiplying factor and show how to construct it in general. The construction can be made with a multiplying function as regular as needed, at least of class $C^4$. It is also possible to make it smooth, however, with this technique, we can not control how large is the positive curvature that appears. In section \ref{The new geodesic flow is Partially Hyperbolic} we show that the resulting flow is partially hyperbolic by using the \textit{cones criteria} (check Section \ref{cone criteria}). We begin by explicitly exhibiting the invariant splitting along the \textit{central geodesic}, which gives us the candidates for the invariant cone families. We compute the variation of the cones' openness along a general orbit of the flow, then the proof splits into some cases: for geodesics that are parallel to the central geodesic or almost parallel, we show that openness variation is approximated by a positive quantity and this implies cone invariance. We then show that we can shrink the deformed region in order to avoid transversal geodesics losing hyperbolicity, and then we also obtain invariance of the cone family. The proof is completed by using the symplectic structure of the geodesic flow (check Lemma \ref{dominated splitting in symplectic}). This strategy is also used in \cite{CARNEIRO&PUJALS14}, however the computations that appear are treated differently. Finally, in Section \ref{Proof of Theorem B} we prove Theorem \ref{ergodic partially hyperbolic geodesic flow} and Corollary \ref{new geodesic flow has all the good properties}. The proof of partial hyperbolicity with a different deformation works in the same way, thus we are left to analyze the curvature. We show that every sectional curvature can be bounded by a non-positive function, which vanishes just along the \textit{central geodesic}.
\section{Preliminaries}
\label{Preliminaries}
Throughout the rest of this paper, $(M,g)$ will denote a compact Riemannian manifold without boundary and dimension $n> 2$.  We denote by $TM$ the tangent bundle and $T^1M$ its unit tangent bundle. For simplicity of notation, we are going to use the Einstein summation convention to avoid writing several summation symbols, i.e. we are going to write $x^iE_i$ to mean $\sum_{i}x^iE_i$ whenever the index $i$ appears once as an upper index and once as a lower index and the summation is considered among all possible values of $i$. We are going to use the summation sign whenever any confusion with indices is possible.
\subsection{Riemannian Geometry}
Great references for this section are \cite{LEE18riemannian} and \cite{WALSCHAP04}. Given a smooth connection $\nabla: \mathcal{X}(M)\times \mathcal{X}(M)\rightarrow \mathcal{X}(M)$ and a local frame $\{E_i\}$, the map $\nabla$ is determined by its action on this local frame as $\nabla_{E_i}E_j=\Gamma_{ij}^kE_k$, where $\{\Gamma_{ij}^k\}$ is a family of smooth functions defined wherever the local frame is defined. The functions $\Gamma_{ij}^k$ are called \textit{Christoffel Symbols}. A classical result of Riemannian geometry is the existence of a unique smooth connection that is compatible with $g$ and symmetric called \textit{Levi-Civita connection} and its \textit{Christoffel Symbols} are given locally in terms of the metric components $g_{ij}$ and its inverse $g^{ij}$ by:
\[
\Gamma_{ij}^k=\frac{g^{kl}}{2}(\partial_ig_{lj}+\partial_jg_{li}-\partial_lg_{ij}).
\]
In this paper, we will consider the following convention for the curvature tensor
\[
R(X,Y)Z=\nabla_X\nabla_YZ-\nabla_Y\nabla_XZ-\nabla_{[X,Y]}Z.
\]
So, the sectional curvatures are given by
\[
K(X,Y)=\frac{g(R(X,Y)Y,X)}{|X\wedge Y|^2},
\]
where $|X\wedge Y|=\sqrt{g(X,X)g(Y,Y)-g(X,Y)}$.
For a conformal metric given by $\til{g}=\phi g$, where $\phi=\mrm{e}^h$, we have the following relations (check \cite{WALSCHAP04}) for the Christoffel Symbols
\begin{equation}
    \label{conformal christoffel symbols}
    \til{\Gamma}_{ij}^k=\Gamma_{ij}^l+\frac{1}{2}(\partial_ih\delta_j^k+\partial_jh\delta_i^k-\partial_lhg^{lk}g_{ij}),
\end{equation}
for the covariant derivative given by the Levi-Civita connection
\begin{equation}
   \label{conformal connection}
    \til{\nabla}_XY=\nabla_XY+\frac{1}{2}(X(h)Y+Y(h)X-g(X,Y)\nabla h),
\end{equation}
and for curvature tensor
\begin{align}
\label{conformal curvature}
    \til{R}(X,Y)Z&=R(X,Y)Z+\frac{1}{2}\{g(\nabla_X\nabla h,Z)Y-g(\nabla_Y\nabla h,Z)X\nonumber\\
    &+g(X,Z)\nabla_Y\nabla h -g(Y,Z)\nabla_X\nabla h\}\nonumber\\
    &+\frac{1}{4}\{((Yh)(Zh)-g(Y,Z)|\nabla h|^2)X-((Xh)(Zh)-g(X,Z)|\nabla h|^2)Y\nonumber\\
    &+((Xh)g(Y,Z)-(Yh)g(X,Z))\nabla h\}.
\end{align}
Observe that $\til{|X\wedge Y|}^2=\til{g}(X,X)\til{g}(Y,Y)-\til{g}(X,Y)^2=\phi^2 |X\wedge Y|^2$. Then, for any pair of linearly independent vectors $X, Y$ the sectional curvature will be given by
\begin{align*}
    \phi\til{K}(X,Y)&=K(X,Y)+\frac{1}{2|X\wedge Y|}\{g(\nabla_X\nabla h,Y)g(Y,X)-g(\nabla_Y\nabla h,Y )g(X,X)\\
    &+g(X, Y)g(\nabla_Y\nabla h,X) 
    -g(Y,Y)g(\nabla_X\nabla h,X)\}\\
    &+\frac{1}{4}\{((Yh)(Yh)-g(Y,Y)|\nabla h|^2)g(X,X)\\
    &-((Xh)(Yh)-g(X,Y)|\nabla h|^2)g(Y,X)\\
    &+((Xh)g(Y,Y)-(Yh)g(X,Y))g(\nabla h,X)\}.
\end{align*}
The sectional curvature for an orthonormal basis $\{X,Y\}$ of a plane $P$ is given by
\begin{equation}
\label{conformal sectional curvature}
\phi \til{K}(X,Y)=K(X,Y)-\frac{1}{2}(g(\nabla_X\nabla h,X)+g(\nabla_Y\nabla h,Y))-\frac{1}{4}(|\nabla h|^2-(Xh)^2-(Yh)^2).  
\end{equation}
Here, $g(\nabla_X\nabla h,Y)$ is the so called \textit{Riemannian Hessian of the function $h$} and is also typically denoted by $\text{Hess}(h)(X,Y)$.
We are going to use the following elementary Lemmas:
\begin{lemma}
\label{derivative of product metric by  inverse}
    If $g_{ij}$ are the components of a Riemannian metric in coordinates and $g^{ij}$ of its inverse, then the following expressions hold
    \begin{enumerate}
        \item \begin{equation}
            \label{derivative inverse}
            \partial_s g^{ik}g_{kj}=-g^{ik}\partial_sg_{kj}.
        \end{equation}
        \item \begin{equation}
            \label{derivative with christoffel}
            \partial_p g_{ms}=g_{ns}\Gamma_{pm}^n+g_{nm}\Gamma_{ps}^n.
        \end{equation}
    \end{enumerate}
\end{lemma}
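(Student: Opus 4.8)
```latex
\textbf{Proof proposal for Lemma \ref{derivative of product metric by inverse}.}

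The plan is to prove both identities by differentiating known algebraic and differential relations satisfied by the metric components and then reading off the consequences, taking care with the Einstein summation convention.

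For part (1), I would start from the defining relation of the inverse matrix, namely $g^{ik}g_{kj}=\delta^i_j$, where the sum over $k$ is understood. Since $\delta^i_j$ is a constant for each fixed pair $(i,j)$, applying the coordinate vector field $\partial_s$ to both sides and using the product (Leibniz) rule gives
\[
(\partial_s g^{ik})\, g_{kj} + g^{ik}\,(\partial_s g_{kj}) = 0.
\]
Rearranging this yields $(\partial_s g^{ik})\, g_{kj} = -\, g^{ik}\,(\partial_s g_{kj})$, which is exactly \eqref{derivative inverse}. This is completely elementary; the only point requiring attention is that the repeated index $k$ on each side is summed, so the identity is an equality of tensorial expressions in the free indices $i,j,s$.

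For part (2), I would use the compatibility of the Levi-Civita connection with the metric, which in coordinates states that the covariant derivative of the metric tensor vanishes, i.e. $\nabla_{\partial_p} g = 0$. Writing out the component expression for the covariant derivative of a $(0,2)$-tensor gives
\[
\partial_p g_{ms} = g_{ns}\,\Gamma^n_{pm} + g_{nm}\,\Gamma^n_{ps},
\]
where each Christoffel symbol accounts for the connection acting on one of the two lower indices of $g$, and the index $n$ is summed. This is precisely \eqref{derivative with christoffel}. Alternatively, one can verify it directly by substituting the explicit formula for $\Gamma^k_{ij}$ in terms of the metric and its derivatives and checking that the right-hand side collapses to $\partial_p g_{ms}$; the two middle terms cancel and the surviving terms reassemble the left-hand side.

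Neither part presents a genuine obstacle, since both are standard consequences of the defining properties of the inverse metric and of metric compatibility; the main thing to be careful about is bookkeeping of the summed versus free indices so that the stated identities are interpreted correctly. I would present part (1) via the Leibniz rule applied to $g^{ik}g_{kj}=\delta^i_j$ and part (2) via metric compatibility $\nabla g = 0$, as these are the cleanest routes and match the level of the elementary lemmas the paper collects for later use.
```
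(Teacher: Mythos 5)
Your proof is correct on both counts: part (1) is exactly the Leibniz rule applied to $g^{ik}g_{kj}=\delta^i_j$, and part (2) is the component form of metric compatibility $\nabla g=0$ (equivalently, the direct substitution of $\Gamma_{ij}^k=\frac{g^{kl}}{2}(\partial_ig_{lj}+\partial_jg_{li}-\partial_lg_{ij})$, under which the mixed-derivative terms cancel in pairs and leave $\partial_p g_{ms}$). The paper actually states this lemma without any proof, so there is no argument to compare against; your write-up is the standard one and supplies precisely what the paper takes for granted before using both identities in its proof of Lemma \ref{curvature in coordinates}.
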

\begin{lemma}
 \label{curvature in coordinates}
In any coordinates, the curvature tensor can be expressed as
    \begin{equation}
    R_{ijks}=R_{ijk}^lg_{ls}=-\frac{1}{2}(\partial_{js}^2g_{ik}+\partial_{ik}^2g_{js}-\partial_{is}^2g_{jk}-\partial_{jk}^2g_{is})-g_{mn}(\Gamma_{ki}^m\Gamma_{js}^n-\Gamma_{si}^m\Gamma_{jk}^n).
\end{equation}
\end{lemma}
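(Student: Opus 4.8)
The plan is to compute everything in a local coordinate frame $\{\partial_i\}$, where the vanishing of the brackets $[\partial_i,\partial_j]=0$ collapses the curvature operator to $R(\partial_i,\partial_j)\partial_k=\nabla_{\partial_i}\nabla_{\partial_j}\partial_k-\nabla_{\partial_j}\nabla_{\partial_i}\partial_k$. Expanding each covariant derivative through $\nabla_{\partial_i}\partial_j=\Gamma_{ij}^k\partial_k$ and collecting the coefficient of $\partial_l$ gives the familiar expression $R^l_{ijk}=\partial_i\Gamma_{jk}^l-\partial_j\Gamma_{ik}^l+\Gamma_{jk}^m\Gamma_{im}^l-\Gamma_{ik}^m\Gamma_{jm}^l$; contracting with $g_{ls}$ then produces $R_{ijks}$. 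From here the whole content of the statement is to trade the derivatives of Christoffel symbols for second derivatives of the metric, and to show that the leftover products of first derivatives collapse to the single quadratic expression on the right-hand side. One should fix the ordering of the indices in $R^l_{ijk}$ consistently with the convention $R(X,Y)Z=\nabla_X\nabla_Y Z-\cdots$ stated earlier, since the overall sign of the formula is sensitive to it.

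For the second-derivative block I would use the Christoffel symbol of the first kind, $g_{ls}\Gamma^l_{jk}=\tfrac12(\partial_j g_{sk}+\partial_k g_{sj}-\partial_s g_{jk})$, which follows from the definition of $\Gamma^l_{jk}$ together with $g_{ls}g^{lp}=\delta^p_s$, combined with the product rule in the form $g_{ls}\partial_i\Gamma^l_{jk}=\partial_i\bigl(g_{ls}\Gamma^l_{jk}\bigr)-(\partial_i g_{ls})\Gamma^l_{jk}$. Differentiating the first-kind symbol yields exactly the second partial derivatives of the metric, and after forming the antisymmetric combination in $i$ and $j$ the symmetric mixed derivative $\partial^2_{ij}g_{sk}$ cancels, leaving precisely the four-term block of second derivatives of $g$ appearing in the statement. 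This is the easy half of the computation.

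The substantive step is the bookkeeping for the remaining terms. The product rule has generated two extra pieces $-(\partial_i g_{ls})\Gamma^l_{jk}+(\partial_j g_{ls})\Gamma^l_{ik}$, while the curvature operator itself contributed the genuine quadratic terms $g_{ls}\bigl(\Gamma^m_{jk}\Gamma^l_{im}-\Gamma^m_{ik}\Gamma^l_{jm}\bigr)$. I would rewrite the metric derivatives $\partial_i g_{ls}$ and $\partial_j g_{ls}$ using \eqref{derivative with christoffel}, namely $\partial_p g_{ms}=g_{ns}\Gamma^n_{pm}+g_{nm}\Gamma^n_{ps}$, which turns every surviving term into the metric contracted against a product of two Christoffel symbols. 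Relabelling dummy indices and using the symmetry $\Gamma^k_{ij}=\Gamma^k_{ji}$, the four terms in which the metric carries the index $s$ cancel in pairs, and what remains assembles into the quadratic Christoffel expression $-g_{mn}\bigl(\Gamma^m_{ki}\Gamma^n_{js}-\Gamma^m_{si}\Gamma^n_{jk}\bigr)$ of the statement, with the sign fixed by the convention chosen above.

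The only genuine obstacle is this final cancellation: there is no conceptual difficulty, but one must track carefully which of the summed indices is sitting on the metric factor and perform the relabelings so that the four ``$g_{\cdot s}$'' terms annihilate while the two ``$g_{\cdot l}$'' terms combine correctly. A convenient sanity check, which simultaneously pins down the overall sign, is to specialize to the round two-sphere and verify that the resulting $R_{1221}$ reproduces the expected sectional curvature $+1$; this confirms that the index and sign conventions have been selected consistently throughout.
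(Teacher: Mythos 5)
Your plan is essentially the paper's own proof: both arguments trade the derivatives $\partial\Gamma$ for second derivatives of the metric and then use identity \eqref{derivative with christoffel}, $\partial_p g_{ms}=g_{ns}\Gamma_{pm}^n+g_{nm}\Gamma_{ps}^n$, to turn the surviving first-derivative terms into quadratic Christoffel terms, after which the terms carrying $g_{\cdot s}$ cancel exactly as you describe. The only difference is cosmetic: you reach the intermediate identity
\begin{equation*}
g_{ls}\partial_j\Gamma^l_{ik}=\tfrac{1}{2}\left(\partial^2_{jk}g_{si}+\partial^2_{ji}g_{sk}-\partial^2_{js}g_{ik}\right)-(\partial_j g_{ls})\Gamma^l_{ik}
\end{equation*}
by lowering the index first (Christoffel symbol of the first kind) and using the product rule, whereas the paper expands $\Gamma^l_{ik}=\tfrac{g^{ml}}{2}(\cdots)$ and invokes \eqref{derivative inverse}; both routes give the same line and the same endgame.

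There is, however, one concrete point you must resolve before this counts as a proof of the Lemma as printed. With the convention you fix --- $R^l_{ijk}$ the coefficient of $\partial_l$ in $R(\partial_i,\partial_j)\partial_k$, with the paper's $R(X,Y)Z=\nabla_X\nabla_YZ-\nabla_Y\nabla_XZ-\nabla_{[X,Y]}Z$ --- the computation you outline lands on
\begin{equation*}
R_{ijks}=+\tfrac{1}{2}\left(\partial_{js}^2g_{ik}+\partial_{ik}^2g_{js}-\partial_{is}^2g_{jk}-\partial_{jk}^2g_{is}\right)+g_{mn}\left(\Gamma_{ki}^m\Gamma_{js}^n-\Gamma_{si}^m\Gamma_{jk}^n\right),
\end{equation*}
i.e.\ the negative of the stated right-hand side. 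The paper's proof gets the stated sign because it starts from $R_{ijks}=g_{ls}\left(\partial_j\Gamma_{ik}^l-\partial_i\Gamma_{jk}^l+\Gamma_{ik}^r\Gamma_{jr}^l-\Gamma_{jk}^r\Gamma_{ir}^l\right)$, which attaches the components to $R(\partial_j,\partial_i)\partial_k$ rather than $R(\partial_i,\partial_j)\partial_k$. Your sphere sanity check would expose exactly this: with the stated operator convention, $R_{1221}=g(R(\partial_1,\partial_2)\partial_2,\partial_1)=\sin^2\theta>0$ on the round sphere, which confirms the plus version rather than the printed formula (the printed formula evaluates to $-\sin^2\theta$ there). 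Note that the paper is itself inconsistent on this point: when the Lemma is reused in \eqref{NEW curvature in coordinates}, the right-hand side appears with your sign, and that is the sign needed for the curvature estimates that follow. So your computation is sound, but as planned it proves the sign-reversed identity; to obtain the Lemma verbatim you would have to adopt the swapped-index component convention that the paper's proof implicitly uses.
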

\begin{proof}
    To be careful, we will analyze each part of the sum we are interested in. By using the previous Lemma,
    \begin{align*}
        g_{ls}\partial_j\Gamma_{ik}^l&=\frac{1}{2}(g_{ls}\partial_j g^{ml}(\partial_kg_{mi}+\partial_ig_{mk}-\partial_mg_{ik})+g_{ls}g^{ml}(\partial_{jk}^2g_{mi}+\partial_{ji}^2g_{mk}-\partial_{jm}^2g_{ik}))\\
        &=\frac{1}{2}(-g^{ml}\partial_j g_{ls}(\partial_kg_{mi}+\partial_ig_{mk}-\partial_mg_{ik})+\delta^m_s(\partial_{jk}^2g_{mi}+\partial_{ji}^2g_{mk}-\partial_{jm}^2g_{ik}))\\
        &=-\partial_jg_{ls}\Gamma_{ik}^l+\frac{1}{2}(\partial_{jk}^2g_{si}+\partial_{ji}^2g_{sk}-\partial_{js}^2g_{ik})\\
        &=-(g_{ns}\Gamma_{jl}^n+g_{nl}\Gamma_{js}^n)\Gamma_{ik}^l+\frac{1}{2}(\partial_{jk}^2g_{si}+\partial_{ji}^2g_{sk}-\partial_{js}^2g_{ik})\\
        &=\frac{1}{2}(\partial_{jk}^2g_{si}+\partial_{ji}^2g_{sk}-\partial_{js}^2g_{ik})-g_{ns}\Gamma_{jl}^n\Gamma_{ik}^l-g_{nl}\Gamma_{js}^n\Gamma_{ik}^l
    \end{align*}
    By an analogous calculation, we have
    \[
    g_{ls}\partial_i\Gamma_{jk}^l=\frac{1}{2}(\partial_{ik}^2g_{sj}+\partial_{ij}^2g_{sk}-\partial_{is}^2g_{jk})-g_{ns}\Gamma_{il}^n\Gamma_{jk}^l-g_{nl}\Gamma_{is}^n\Gamma_{jk}^l.
    \]
        Subtracting the last two equations we get,
    \begin{align*}
        g_{ls}\partial_j\Gamma_{ik}^l-g_{ls}\partial_i\Gamma_{jk}^l=&\frac{1}{2}(\partial_{jk}^2g_{si}+\partial_{is}^2g_{jk}-\partial_{js}^2g_{ik}-\partial_{ik}^2g_{sj})\\
        &+g_{ns}\Gamma_{il}^n\Gamma_{jk}^l+g_{nl}\Gamma_{is}^n\Gamma_{jk}^l-g_{ns}\Gamma_{jl}^n\Gamma_{ik}^l-g_{nl}\Gamma_{js}^n\Gamma_{ik}^l
    \end{align*}
    Finally, $R_{ijks}$ will be given by
    \begin{align*}
        R_{ijks}&=g_{ls}\partial_j\Gamma_{ik}^l-g_{ls}\partial_i\Gamma_{jk}^l+g_{ls}\Gamma_{ik}^r\Gamma_{jr}^l-g_{ls}\Gamma_{jk}^r\Gamma_{ir}^l\\
        &=\frac{1}{2}(\partial_{jk}^2g_{si}+\partial_{is}^2g_{jk}-\partial_{js}^2g_{ik}-\partial_{ik}^2g_{sj})\\
        &\Ccancel[blue]{+g_{ns}\Gamma_{il}^n\Gamma_{jk}^l}+g_{nl}\Gamma_{is}^n\Gamma_{jk}^l\Ccancel[red]{-g_{ns}\Gamma_{jl}^n\Gamma_{ik}^l}-g_{nl}\Gamma_{js}^n\Gamma_{ik}^l\\
        &\Ccancel[red]{+g_{ls}\Gamma_{ik}^r\Gamma_{jr}^l}-\Ccancel[blue]{g_{ls}\Gamma_{jk}^r\Gamma_{ir}^l}
    \end{align*}
\end{proof}
\noindent We will work with the same manifolds considered in \cite{CARNEIRO&PUJALS14}:
\begin{enumerate}
    \item Compact Kähler manifolds of holomorphic curvature $-1$ (cf. \cite{GOLDMAN1999complex}).
    \item Compact locally symmetric quaternionic Kähler manifolds of negative curvature (cf. \cite{BESSE2007einstein}).
\end{enumerate}
 For the above manifolds, we consider the normalization of the metric such that their sectional curvatures are $\frac{1}{4}-$pinched, i.e. $-1\leq K\leq -\frac{1}{4}$. For every $(x,v)\in TM$, let us define the following spaces
\[
A(x,v):=\{w\in T_xM: K(v,w)=-1\}
\]
\[
B(x,v):=\left\{w\in T_xM: K(v,w)=-\frac{1}{4}\right\}
\]
 We point out that Kähler manifolds are examples of locally symmetric metrics, i.e. its  curvature tensor is parallel ($\nabla R\equiv 0$), and for the locally symmetric manifolds (cf. \cite{JOST08}) we get that
\[
\{v\}^\perp = A(x,v)\oplus B(x,v).
\]
Denote by $Pr_C$ the projection to the space $C(x,v)$, with $C=A,B$. For any vector field $X$ we will use the following notations: if $\alpha$ is a geodesic and $\dfrac{D}{dt}=\nabla_{\alpha'}$ is the covariant derivative along $\alpha$, then
    \[
    \begin{cases}
        X_C:=&Pr_CX\\
        X_C':=&Pr_C\left(\dfrac{D X}{dt}\right)\\
        (X_C)':=&\dfrac{D}{dt}(Pr_C X)\\
        X_{C'}:=&\left(\dfrac{D}{dt}Pr_C\right)X\\
    \end{cases}
    \]
The above locally symmetric manifolds satisfy the property that the subbundles $C(x,v)$ are parallel, i.e. the parallel transport of vectors in $C(x,v)$ along a smooth curve $\alpha$ remain in the correspondent space $C(\alpha,\alpha')$. Then for any vector field $X$ along a geodesic $\alpha$ we have
    \[
    \dfrac{D}{dt}(Pr_C X)=Pr_C\left(\dfrac{D X}{dt}\right). 
    \]
Another way to state this is the following: saying that $C$ is parallel also means that the covariant derivative of its projection is parallel. Remember that for any endomorphism $F: TM\rarrow TM$ (also $(1,1)$-tensor) its covariant derivative is given by
    \[
    \nabla_X F= \nabla_X\circ F-F\circ \nabla_X.
    \]
    So, if $Pr_C$ is parallel along every geodesic $\gamma$, then $$(\nabla_{\gamma'}Pr_C)=\nabla_{\gamma'}\circ Pr_C-Pr_C\circ \nabla_{\gamma'}=0.$$
    Therefore the above notation gives us $X_{C'}=0$ and $(X_C)'=X_C'$.
In addition, the parallel transport of the spaces $A$ and $B$ along a closed prime geodesic preserves orientation. We are also going to make use of the following relation that holds for the above metrics (cf. \cite{JOST08}):
\begin{equation}
    \label{curvature locally symmetric}
R(X,v)v=-\frac{1}{4}X_{B(x,v)}-X_{A(x,v)}.
\end{equation}

\subsection{Geodesic Flow}
For a given $\theta=(p,v) \in TM$, {we define} $\gamma_{_{\theta}}(t)$ {as} the unique geodesic with initial conditions $\gamma_{_{\theta}}(0)=p$ and 
$\gamma_{_{\theta}}'(0)=v$. For $t\in \mathbb{R}$, the family of diffeomorphisms  $g_t:TM \to TM$ define by 

\[g_{t}(\theta)=(\gamma_{_{\theta}}(t),\gamma_{_{\theta}}'(t))
\]
is called the \textit{geodesic flow}.\\
To study the dynamical behavior of $g_t$, we need to introduce special coordinates in $T(TM)$ (tangent bundle of $TM$).  Let $V:=\operatorname{ker}\, d\pi$ be the \textit{vertical subbundle} of $T(TM)$, where $\pi\colon TM \to M$ is the canonical projection. Let $\mathcal{K}\colon T(TM)\to TM$ be the Levi-Civita connection map of $M$ and $H:=\operatorname{ker} \mathcal{K}$ be \textit{the horizontal subbundle}. It is easy to see that $T_{_{\theta}}TM = H(\theta) \oplus V(\theta)$. Moreover, we can identify $T_{_{\theta}}TM$ and 
$T_pM \times T_pM$ through  the linear isomorphism $j_{_{\theta}}:T_{_{\theta}}TM \rightarrow T_pM \times T_pM$ given by 
	$$ j_{_{\theta}}(\xi) = (d_{_{\theta}}\pi(\xi),\mathcal{K}_{_{\theta}}(\xi)).$$
 With this coordinates, the \textit{Geodesic Vector Field}, i.e. the vector field $G:TM\rightarrow TTM$ that generates the flow $g_t$ is given by the simple expression
 \[
 G((p,v))=(v,0).
 \]
The \textit{Sasaki metric} is a metric that makes $H(\theta)$ and $V(\theta)$ orthogonal and is given by
\[
\wh{g}_{_{\theta}}(\xi,\eta) = g(d_{_{\theta}}\pi(\xi), d_{_{\theta}}\pi(\eta) )+g(\mathcal{K}_{_{\theta}}(\xi),    \mathcal{K}_{_{\theta}}(\eta)  )
\]
Observe that $T^1M$ is invariant by $g_t$, thus,  from now on, we consider $g_t$ restricted to $T^1M$ and $T^1M$ endowed with the  Sasaki metric.\\
\ \\
The Jacobi fields have an important role in the study of the dynamics of the geodesic flow, in fact, they are important geometrical tools to understand the behavior of the differential of the geodesic flow. A vector field $J$ along $\gamma_{_{\theta}}$ is called the \emph{Jacobi field} if it satisfies the equation 
\begin{equation}\label{Jacobi}
J'' + R(J,\gamma'_{_{\theta}})\gamma'_{_{\theta}} = 0,
\end{equation}
where $R$ is the Riemann curvature tensor of $M$ and $``\,'\,"$ denotes the covariant derivative along $\gamma_{_{\theta}}$. \\
For  $\theta=(p,v)$ and  $\xi=(w_{1},w_2) \in T_{_{\theta}}T^1M$, (the horizontal and vertical decomposition) with $w_1, w_2 \in T_{p} M$ and $\langle v, w_2 \rangle =0$. It is known that
\begin{equation}\label{eq4}
(dg_{t})_{\theta}(\xi) = (J_{_{\xi}}(t), J'_{\xi}(t)),
\end{equation}  
where $J_{_{\xi}}$ denotes the unique Jacobi vector field along $\gamma_{_{\theta}}$ such that $J_{_{\xi}}(0) = w_1$ and $J'_{\xi}(0) = w_2$ (see \cite{Paternain}). The equation (\ref{eq4}) enables us to assert that the investigation of the dynamics of the geodesic flow revolves around Jacobi fields. \\
The Riemannian metric $g$ induces a symplectic $2-$form on $TM$ and a contact form on $T^1M$, which are invariant by the geodesic flow, in the following way: given $(x_1,y_1),(x_2,y_2)\in T_\theta TM$ we can define the symplectic form by
\[
\omega_\theta((x_1,y_1),(x_2,y_2))=g(x_1,y_2)-g(y_1,x_2),
\]
and for $\xi\in T_\theta TM$, with $\theta=(p,v)$ we define the $1-$form:
\[
\alpha_\theta(\xi)=\wh{g}(\xi,G(\theta))=g(d_\theta \pi(\xi),v).
\]
When $\alpha$ is restricted to $T^1M$ it  becomes a contact form with \textit{Reeb Vector Field} $G$. Thus, the volume form $\alpha\wedge (d\alpha)^{2n-1}$ induces a probability measure on $T^1M$ called \textit{Liouville Measure}. For $\theta\in T^1M$ we will write $S(\theta)=\ker \alpha_\theta$, which is the set of orthogonal vectors to $G(\theta)$ with respect to the Sasaki metric. The subbundle $S(T^1M)\subset TT^1M$ is called \textit{Contact Structure of the unit tangent bundle}.

\noindent One of the geodesic flows that we discuss in this paper is the Anosov geodesic flow, whose definition follows below.\\
We say that the geodesic flow $g_t:T^1M \rightarrow T^1M$ is Anosov (with {respect to} the Sasaki metric on $T^1M$) if $T(T^1M)$ has a splitting $T(T^1M) = E^{ss} \oplus \langle G \rangle \oplus E^{uu} $ such that 
\begin{flalign*}
&\begin{aligned}
\, \, \,\,\, \hspace{0.5cm} &(dg_t)_{\theta} (E^{ss}(\theta)) =E^{ss}(g_t(\theta)),\\
\, \, \,\,\, &(dg_t)_{\theta} (E^{uu}(\theta)) = E^{uu}(g_t(\theta)),\\
\, \, \,\,\, & ||(dg_t)_{\theta}\big{|}_{E^{ss}}|| \leq C e^{\lambda t},\\
\, \, \,\,\,& ||(dg_{-t})_{\theta}\big{|}_{E^{uu}}|| \leq C e^{\lambda t},\\
\end{aligned}&&
\end{flalign*}

\noindent for all $t\geq 0$ with $ C > 0$ and $\lambda <0$,  where $G$ is the vector field derivative of the geodesic flow.\\
The geodesic flow of a compact manifold with negative curvature is Anosov (cf. \cite{Anosov}). There are also examples of compact manifolds with regions of zero and positive curvature whose geodesic flow is Anosov (\cite{Donnay} and \cite{EBERLEIN73}). Nevertheless, the Anosov condition is inherently connected to the presence of negative curvature.
\begin{proposition}\emph{(Corollary 3.4 in \cite{EBERLEIN73})}
\label{Eberleins criterion}
    Let $(M,g)$  be a Riemannian manifold. If the geodesic flow $g_t$ is of Anosov type, the following holds: Let $\alpha$ be any unit speed geodesic of $M$, and $X(t)$ be any nonzero perpendicular parallel vector field along $\alpha$. Then the sectional curvature $K(X, \alpha')(t) < 0$ for some $t\in \R$.
\end{proposition}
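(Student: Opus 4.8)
The plan is to argue by contradiction and translate the Anosov hypothesis into the matrix Riccati equation along the normal bundle of $\alpha$; the parallel hypothesis on $X$ is precisely what makes this computation collapse to a one-dimensional Riccati inequality.

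First I would set up the contradiction: suppose there is a unit-speed geodesic $\alpha$ and a nonzero perpendicular \emph{parallel} field $X$ along $\alpha$ with $K(X,\alpha')(t)\ge 0$ for every $t\in\R$, normalized so that $g(X,X)=1$. Using the identification (\ref{eq4}) of $T_\theta T^1M$ with Jacobi fields, the Anosov splitting realizes $E^{ss}(\theta)$ and $E^{uu}(\theta)$ along the geodesic as graphs $\{(w,U^sw)\}$ and $\{(w,U^uw)\}$ over the normal space $N(t)=\{\alpha'(t)\}^{\perp}$, where $U^s(t),U^u(t)$ are symmetric endomorphisms solving the Riccati equation $U'+U^2+\mathcal{R}=0$ with $\mathcal{R}(t)w=R(w,\alpha'(t))\alpha'(t)$. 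Invariance of the subbundles under $dg_t$ gives that $U^s$ and $U^u$ are defined for all $t\in\R$. I would also record the two standard facts I intend to use: the splitting is direct, so in particular the Wronskian $U^u-U^s$ is nonsingular, and uniform hyperbolicity forces $U^u-U^s$ to be positive definite.

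The heart of the argument is a short computation. Set $u(t)=g(U^s(t)X(t),X(t))$ and $w(t)=g(U^u(t)X(t),X(t))$. Because $X$ is parallel, $X'\equiv 0$, so differentiating and substituting the Riccati equation gives $u'=g((U^s)'X,X)=-g((U^s)^2X,X)-g(\mathcal{R}X,X)=-|U^sX|^2-K(X,\alpha')$, using symmetry of $U^s$ and $g(\mathcal{R}X,X)=K(X,\alpha')$ for the orthonormal pair $X,\alpha'$; the identical computation holds for $w$. Since $K(X,\alpha')\ge 0$ and $|U^sX|^2\ge u^2$ by Cauchy--Schwarz (as $|X|=1$), this yields the scalar inequalities $u'\le -u^2$ and $w'\le -w^2$. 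A globally defined solution of $y'\le -y^2$ cannot be strictly negative at any point without blowing up to $-\infty$ in finite forward time, nor strictly positive at any point without blowing up to $+\infty$ in finite backward time; since $U^s$ and $U^u$ exist for all $t$, I conclude $u(t)\ge 0$ and $w(t)\le 0$ for all $t$. Hence $g((U^u-U^s)X,X)=w-u\le 0$, which contradicts the positive definiteness of $U^u-U^s$ together with $X\ne 0$. Therefore $K(X,\alpha')(t)<0$ for some $t\in\R$.

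The main obstacle is not the Riccati estimate itself, which is elementary once $X$ is known to be parallel, but the passage from the abstract Anosov splitting to the analytic facts it rests on: that $E^{ss}$ and $E^{uu}$ are graphs of globally defined symmetric Riccati solutions (equivalently, that the flow has no conjugate points, so the solutions never blow up) and that $U^u-U^s$ is uniformly positive definite. These are standard in the theory of geodesic flows without conjugate points and I would cite them rather than reprove them; with them in hand, the contradiction above closes the proof.
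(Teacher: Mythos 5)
Your proposal cannot be checked against an in-paper argument for the simple reason that the paper does not prove this statement at all: it is quoted, with attribution, as Corollary 3.4 of Eberlein's paper \cite{EBERLEIN73}, and the authors use it as a black box to conclude that killing the curvature of one parallel plane along $\gamma$ destroys the Anosov property. So what you have produced is a self-contained proof where the paper has only a citation, and your proof is correct. The core computation is right: since $X'=0$, differentiating $u=g(U^sX,X)$ and substituting the Riccati equation gives $u'=-\abs{U^sX}^2-K(X,\alpha')$, Cauchy--Schwarz yields $u'\le -u^2$ under the contradiction hypothesis $K(X,\alpha')\ge 0$, and the comparison argument for $y'\le -y^2$ on all of $\R$ correctly forces $u\ge 0$ (else blow-up to $-\infty$ forward in time) and $w\le 0$ (else blow-up backward in time), so $g((U^u-U^s)X,X)\le 0$, contradicting positive definiteness of $U^u-U^s$. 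The three facts you defer to the literature are exactly the nontrivial inputs and are all citable: (i) an Anosov geodesic flow has no conjugate points (Klingenberg in the compact setting, which is the setting of this paper); (ii) $E^{ss}$ and $E^{uu}$ avoid the vertical subbundle and are graphs of symmetric, globally defined Riccati solutions (Green, Eberlein); (iii) $U^u-U^s$ is positive definite. One presentational caveat on (iii): your phrase ``uniform hyperbolicity forces $U^u-U^s$ to be positive definite'' is slightly loose, since hyperbolicity by itself only gives the nonsingularity you already extracted from $E^{ss}\cap E^{uu}=\{0\}$; the upgrade to positive definiteness uses the semidefinite inequality $U^u\ge U^s$, which comes from the no-conjugate-points theory (both operators arise as monotone limits of the solutions $S_T$ associated to Jacobi fields vanishing at time $\pm T$). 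With that reference made precise, your argument closes, and it is in fact the same circle of ideas (stable/unstable Jacobi fields and Riccati comparison) that Eberlein's original proof runs through, so nothing is lost relative to the cited source; what is gained is that the reader sees exactly which structural facts about Anosov geodesic flows the proposition rests on.
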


The last proposition will be useful since we need to nullify the sectional curvature along some geodesics to disrupt the Anosov condition.

\subsection{Partial Hyperbolicity}
In this section we are going to present all the definitions in the particular setting of the geodesic flow, however the definitions are the same for the general case. The classical definition of partial hyperbolicity is the following
\begin{definition}
 A geodesic flow $g_t\colon T^1M \to T^1M$ is \emph{partially hyperbolic} if there exist a nontrivial $dg_t$ invariant  splitting $T(T^1M)/ \langle G \rangle = E^{ss} \oplus E^c \oplus E^{uu}$  such that
\begin{flalign*}
&\begin{aligned}
\, \, \,\,\, \hspace{0.5cm} &\|(dg_{t})_{\theta}|_{E^{ss}}\|<C e^{\lambda t},\\
\, \, \,\,\, & \|(dg_t)_{\theta}|_{E^{uu}}\|\leq C e^{-\lambda t},\\
\, \, \,\,\, & C e^{\mu t}\leq  \|(dg_t)_{\theta}|_{E^c}\|\leq C e^{-\mu t},\\
\end{aligned}&&
\end{flalign*}   
for some $\lambda<\mu\leq 0<C$ and for all $\theta \in T^1M$.
\end{definition}
\noindent Another way to present some kind of partial hyperbolicity appear as the so called \textit{dominated splitting} which we present bellow
\begin{definition}
 An $dg_t-$invariant splitting $E\oplus F$ of $T(T^1M)/\langle G \rangle$ is called a dominated splitting if 
 $$\|(dg_t)_{\theta}|_{E(\theta)}\| \cdot \|(dg_{-t})_{g_t(\theta)}|_{F(g_t(\theta))}\|<C e^{-\lambda t}$$
 for some constants $C,\lambda>0$ and $\theta \in T^1M$.
\end{definition}
Dominated splitting and the uniform contraction expansion in $E^{ss}$ and $E^{uu}$ in the definition of partial hyperbolicity are in general different. However in the special case of symplectic dynamic (as the case of geodesic flows), we have a beautiful connection \cite{CONTRERAS2002partially} and \cite{RUGGIERO1991persistently} (see also \cite[Lemma 2.8] {CARNEIRO&PUJALS14}). 
\begin{lemma}
\label{dominated splitting in symplectic}
 Let $(N,\omega)$ be a symplectic manifold, $\omega$  its symplectic $2$-form and $\phi^t \colon N \to N$ a flow on $N$ such that
 $\mathcal{L}_{X}(\omega)=0$, \emph{i.e.}, it preserves the symplectic structure of $N$. If there is a dominated splitting $TN/X =E\oplus E^c \oplus F$
 such that $dim(E) = dim(F)$, then for all $x \in N$ there is a positive real number $C$ and a negative real number $\lambda$ such
 that
 $$\|d\phi_x^t|_E\| \leq Ce^{\lambda t }, \,\,\,\,\,\,\,\, \|d\phi_x^{-t}|_F\| \leq Ce^{\lambda t }.$$
\end{lemma}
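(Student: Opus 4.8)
The plan is to use the symplectic form to upgrade the dominated splitting into genuine hyperbolicity, by exploiting the reciprocity that $\omega$ forces between the extreme bundles $E$ and $F$. Throughout I work on the quotient bundle $TN/\langle X\rangle$, on which $\omega$ descends to a nondegenerate, $d\phi^t$-invariant $2$-form (this is exactly the contact/symplectic reduction relevant to the geodesic flow), and I use that in the intended application $N=T^1M$ is compact, so that all bundle isomorphisms appearing below have norms bounded uniformly in $x$ by a single constant $K$. I also read the three-piece dominated splitting $E\oplus E^c\oplus F$ in the standard sense, namely that the coarser splittings $E\oplus(E^c\oplus F)$ and $(E\oplus E^c)\oplus F$ are dominated in the sense defined above.

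First I would record the \emph{contragredient} relation. Since $\omega$ is nondegenerate, $\Psi_x(v)=\omega_x(v,\cdot)$ is an isomorphism $T_xN/\langle X\rangle \to (T_xN/\langle X\rangle)^*$, and invariance $\omega(d\phi^t u,d\phi^t v)=\omega(u,v)$ translates into $\Psi\circ d\phi^t=(d\phi^{-t})^*\circ\Psi$. Thus $\Psi$ conjugates $d\phi^t$ to its contragredient $(d\phi^t)^{-*}$, which inverts the singular-value spectrum. Consequently $\Psi$ carries the $d\phi^t$-dominated splitting to a $(d\phi^{-t})^*$-dominated splitting of the dual bundle, matching most-contracted piece to most-contracted piece; meanwhile, annihilators furnish a canonical dominated splitting of the dual whose most-contracted piece is $(E\oplus E^c)^0$. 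By uniqueness of the dominated subbundle of a prescribed dimension I get $\Psi(E)=(E\oplus E^c)^0$, and likewise $\Psi(F)=(E^c\oplus F)^0$. Unwinding $\Psi$, these identities say exactly that $E$ and $F$ are $\omega$-isotropic, that $E^\omega=E\oplus E^c$ and $F^\omega=E^c\oplus F$, and hence (using $\dim E=\dim F$) that $\omega$ restricts to a \emph{perfect pairing} $E\times F\to\R$.

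Second I would extract reciprocal growth from this pairing. The perfect, invariant pairing gives a bundle isomorphism $\Lambda_x\colon F(x)\to E(x)^*$ intertwining $d\phi^t|_F$ with the contragredient of $d\phi^t|_E$; since the $\Lambda_x$ have uniformly bounded norms, the singular values of $d\phi^t|_F$ are, up to the factor $K$, the reciprocals of those of $d\phi^t|_E$, so in particular $1/m(d\phi^t|_F)\ge \tfrac1K\|d\phi^t|_E\|$, where $m$ denotes the conorm. Invoking domination of $E\oplus E^c$ by $F$, using $\|d\phi^{-t}|_F(\phi^t x)\|=1/m(d\phi^t|_F(x))$ and $\|d\phi^t|_E\|\le\|d\phi^t|_{E\oplus E^c}\|$, I get $\tfrac1K\|d\phi^t|_E\|^2\le \|d\phi^t|_{E\oplus E^c}\|\cdot\|d\phi^{-t}|_F(\phi^t x)\|\le Ce^{-\lambda t}$, hence $\|d\phi^t|_E\|\le\sqrt{KC}\,e^{-\lambda t/2}$, the desired uniform contraction (with the new negative rate $-\lambda/2$). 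Running the same argument for the time-reversed flow $\phi^{-t}$, which interchanges the roles of $E$ and $F$, yields $\|d\phi^{-t}|_F\|\le\sqrt{KC}\,e^{-\lambda t/2}$.

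I expect the main obstacle to be the first step, the identification that $\omega$ pairs $E$ with $F$, since that is where the symplectic hypothesis and the equal-dimension hypothesis are genuinely used; the delicate point is justifying $\Psi(E)=(E\oplus E^c)^0$, which rests on the uniqueness of the dominated subbundle of a given dimension. Once the isotropy and the perfect pairing are in place, the singular-value reciprocity and its combination with domination are routine. One must also check that the conorm identities and the distortion constant $K$ are uniform in $x$, which is precisely where compactness of $N=T^1M$ enters.
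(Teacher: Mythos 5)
Your proof is correct, but note first that the paper itself does not prove Lemma \ref{dominated splitting in symplectic}: it is quoted from the literature (\cite{CONTRERAS2002partially}, \cite{RUGGIERO1991persistently}, and Lemma 2.8 of \cite{CARNEIRO&PUJALS14}), so the comparison is against those sources rather than against an in-paper argument. Your route is essentially the standard one used there: show that the invariance of $\omega$ forces the extreme bundles to be $\omega$-isotropic with $\omega$ a perfect, invariant pairing $E\times F\to\R$, deduce that the singular values of $d\phi^t|_F$ are (up to uniform constants) reciprocals of those of $d\phi^t|_E$, and then combine this reciprocity with the domination $\|d\phi^t|_{E\oplus E^c}\|\cdot\|d\phi^{-t}|_{F}\|\leq Ce^{-\lambda t}$ to square the norm of $d\phi^t|_E$ and obtain the rate $-\lambda/2$; the time-reversed flow handles $F$. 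The one place where your write-up is arguably cleaner than the references is the derivation of isotropy: conjugating $d\phi^t$ to its contragredient via $\Psi=\omega(\cdot,\,\cdot)$ and invoking uniqueness of the dominated subbundle of prescribed dimension (this is exactly where $\dim E=\dim F$ enters) replaces the more computational estimates one usually sees, at the cost of relying on two standard facts (domination of the annihilator splitting under the contragredient, and uniqueness of dominated splittings) that you should cite or prove. Two caveats you already flag, and which deserve to stay explicit: as literally stated the lemma is sloppy, since for a genuinely symplectic $N$ the quotient $TN/\langle X\rangle$ is odd-dimensional and $\omega$ cannot descend nondegenerately to it, so one must work on an energy level/contact reduction exactly as in the geodesic-flow application; and the uniformity of your constant $K$ (boundedness of $\Psi^{\pm1}$, of the pairing, and of the angle bounds) uses compactness, which the application to $T^1M$ provides and which the pointwise phrasing of the lemma's conclusion tolerates.
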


\subsubsection{Cone invariance criteria for Partially Hyperbolicity}
\label{cone criteria}
This section is similar to \cite[Section 2.2.1]{CARNEIRO&PUJALS14}, but we include it for the completeness of our arguments. Classical results on hyperbolicity and partial hyperbolicity utilize the \emph{cone criteria}, a method used to produce invariant splittings.
Given $\theta \in T^1M$, a subspace $E(\theta)\subset T_{\theta}T^1M$, and a positive number $\delta$, we define the cone at $\theta$ centered around $E(\theta)$ with angle $\delta$ (or opening of the cone) as 
$$C(\theta, E(\theta), \delta):=\{\xi\in T_{\theta}T^1M: \angle (\xi, E(\theta))< \delta\},$$
where $\angle (\xi, E(\theta))$ is the angle between $\xi$ and $E(\theta)$. 
The geodesic flow $g_t$ is partially hyperbolic if there are $\delta>0$, $T>0$, and two continuous families of cones $C(\theta, E_1(\theta), \delta)$  and $C(\theta, E_2(\theta), \delta)$ such that: 
\begin{flalign*}
&\begin{aligned}
\, \, \,\,\, \hspace{0.5cm} &(dg_t)_{\theta} (C(\theta, E_2(\theta), \delta)) \varsubsetneqq C(\theta, E_2(g_t(\theta)), \delta),\\
\, \, \,\,\, &(dg_{-t})_{\theta} (C(\theta, E_1(\theta), \delta)) \varsubsetneqq C(\theta, E_1(g_{-t}(\theta)), \delta),\\
\, \, \,\,\, & ||(dg_t)_{\theta}(\xi_1)|| <  Ke^{\lambda t},\\
\, \, \,\,\,& ||(dg_{-t})_{\theta}(\xi_2)|| <  Ke^{\lambda t},\\
\end{aligned}&&
\end{flalign*}
for all $t>0$, $\xi_1 \in C(\theta, E_1(\theta), \delta)$, $\xi_2 \in C(\theta, E_2(\theta), \delta)$, and some constants $K>0$, $\lambda<0$.\\
In the following, we present a strategy to prove that a family of cones is invariant. Let $E$
be a vector bundle over $T^1M$ which is a subbundle of $TT^1M$ and $\pi_{E}\colon E \to T^1M$ its canonical projection. Let $\text{Pr}_{E} \colon TT^1M \to E$ be the orthogonal projection to $E$. We define the real function $\Theta_{E}\colon TT^1M \to \mathbb{R}$ as 
\begin{equation}
    \Theta_{E}(\xi):= \dfrac{g(\text{Pr}_{E}(\xi), \text{Pr}_{E}(\xi))}{g(\xi,\xi)}.
\end{equation}
\begin{lemma}\emph(\emph{\cite[Lemma 2.10]{CARNEIRO&PUJALS14}})
For a $\delta>0$, and a fixed vector bundle $E$ on $T^1M$, if 
\begin{equation}
    \dfrac{d}{dt}\Theta_{E(g_t(\theta))}((dg_t)_\theta(\xi))>0,
\end{equation}
for $\xi \in \partial C(\theta, E(\theta), \delta):=\{\xi\in T_{\theta}T^1M: \angle (\xi, E(\theta))=\delta\}$, then the family of cones $C(\theta, E(\theta), \delta)$ is invariant for he geodesic flow.  
\end{lemma}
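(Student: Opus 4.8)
The plan is to reinterpret $\Theta_E$ geometrically and then run a first-exit-time (barrier) argument along orbits. First I would observe that, writing $\Theta_E(\xi) = |\mathrm{Pr}_E(\xi)|^2/|\xi|^2$, one has $\Theta_E(\xi) = \cos^2\angle(\xi, E)$, so that the cone is exactly the superlevel set
\begin{equation*}
C(\theta, E(\theta), \delta) = \{\xi : \Theta_{E(\theta)}(\xi) > \cos^2\delta\},
\end{equation*}
and its boundary $\partial C$ is the level set $\{\xi : \Theta_{E(\theta)}(\xi) = \cos^2\delta\}$. With this reformulation, proving invariance of the cone family amounts to showing that for every $\theta$ and every $\xi$ in the closed cone (so $\Theta_{E(\theta)}(\xi) \geq \cos^2\delta$), the function
\begin{equation*}
f(t) := \Theta_{E(g_t(\theta))}((dg_t)_\theta(\xi))
\end{equation*}
satisfies $f(t) > \cos^2\delta$ for all $t > 0$; this is precisely the strict inclusion $(dg_t)_\theta(C(\theta, E(\theta), \delta)) \varsubsetneqq C(g_t(\theta), E(g_t(\theta)), \delta)$.

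The key reduction is to upgrade the hypothesis, which is stated at $t = 0$ but over every base point, into a statement valid whenever the orbit of $\xi$ touches the boundary at any later time. For this I would use the flow cocycle $g_{t+s} = g_t \circ g_s$ together with the chain rule $(dg_{t+s})_\theta = (dg_t)_{g_s(\theta)} \circ (dg_s)_\theta$. Setting $\eta := (dg_s)_\theta(\xi)$, these give
\begin{equation*}
f'(s) = \frac{d}{du}\Big|_{u=0} \Theta_{E(g_u(g_s(\theta)))}((dg_u)_{g_s(\theta)}(\eta)).
\end{equation*}
Thus, if at time $s$ the vector $\eta$ lies on $\partial C(g_s(\theta), E(g_s(\theta)), \delta)$, i.e. $f(s) = \cos^2\delta$, then the hypothesis applied at the base point $g_s(\theta)$ to the boundary vector $\eta$ yields $f'(s) > 0$. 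In words: whenever $f$ attains the critical level $\cos^2\delta$, it is strictly increasing there.

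The conclusion then follows from a standard first-exit argument. Given $\xi$ with $f(0) \geq \cos^2\delta$, suppose for contradiction that $f(t) \leq \cos^2\delta$ for some $t > 0$. If $f(0) = \cos^2\delta$, then $f'(0) > 0$ pushes $f$ strictly above the level immediately, so after shifting the initial time slightly we may assume $f(0) > \cos^2\delta$. Let $s_0 := \inf\{t > 0 : f(t) \leq \cos^2\delta\}$, which is then positive; by continuity $f(s_0) = \cos^2\delta$ while $f(t) > \cos^2\delta$ for $0 < t < s_0$. Approaching $s_0$ from the left forces $f'(s_0) \leq 0$, contradicting the previous paragraph. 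Hence no such time exists and $f(t) > \cos^2\delta$ for all $t > 0$, which is exactly the claimed strict cone invariance.

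The argument is soft, and the points requiring care are bookkeeping rather than a hard estimate. The main thing to get right is the reduction in the second paragraph: one must invoke the hypothesis \emph{uniformly} over all base points $\theta$ and all boundary vectors, so that it can be applied at the shifted configuration $(g_s(\theta), \eta)$, and one must verify that $f$ is genuinely $C^1$ in $t$ so that the first-exit argument and the sign of $f'(s_0)$ are meaningful. This regularity follows from smoothness of the geodesic flow, of the bundle $E$ along orbits, and of $\Theta_E$ away from the zero section. No curvature computation enters at this level; the entire analytic content is the chain rule together with the barrier principle.
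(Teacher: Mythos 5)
The paper does not actually prove this lemma: it is imported verbatim from Carneiro--Pujals (\cite[Lemma 2.10]{CARNEIRO&PUJALS14}), so there is no in-paper argument to compare against; your proposal supplies the missing proof, and it is correct. The two ingredients that carry your argument --- the identification $\Theta_E(\xi)=\cos^2\angle(\xi,E)$, which turns the cone into the superlevel set $\{\Theta_{E(\theta)}>\cos^2\delta\}$, and the cocycle identity $(dg_{t+s})_\theta=(dg_t)_{g_s(\theta)}\circ(dg_s)_\theta$, which converts the hypothesis (stated at $t=0$ but over every base point) into ``$f'(s)>0$ whenever $f(s)=\cos^2\delta$'' --- are exactly the right reduction, and the first-exit/barrier argument then closes it. Two bookkeeping points, both of which you correctly flag, are genuinely needed: (i) the hypothesis must be read as quantified over all $\theta\in T^1M$ and all boundary vectors, which is consistent with how the lemma is used in this paper (the positivity of $\frac{d}{dt}\Uptheta_A^u$ is verified along arbitrary orbits, not just at one base point); (ii) differentiability of $t\mapsto f(t)$ is required for the sign of $f'(s_0)$ to be meaningful, and this holds for the bundles the paper feeds into the lemma, since $P_A^{u,s}$ are built from the parallel (hence smooth along orbits) subbundles $A$ and $B$. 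Finally, regarding the strict inclusion $\varsubsetneqq$ appearing in the paper's definition of cone invariance: your argument in fact shows that the image of the \emph{closed} cone lies in the \emph{open} cone, and after projectivizing (a compact set inside an open set) this yields proper containment, so nothing is missing on that front either.
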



\section{Breaking the Anosov Property}
\label{Breaking the Anosov Property}
We are going to perform a perturbation of the metric $g$ in a tubular neighborhood of a closed geodesic. Let $\gamma$ be a closed geodesic with period $T$ and $\gamma'(0)=\gamma'(T)$ just as in \cite{CARNEIRO&PUJALS14}. For each $x \in M$ and $v\in T_xM$ remember that we have defined
\[
A(x,v):=\{w\in T_xM: K(v,w)=-1\}
\]
\[
B(x,v):=\left\{w\in T_xM: K(v,w)=-\frac{1}{4}\right\}
\]
We consider an orthonormal basis vector field $\{e_0(t):=\gamma'(t),...,e_{n-1}(t)\}$ of $T_{\gamma(t)}M$ such that $\{e_1(t),...,e_{r}(t)\}$ is a basis for $A(\gamma(t),\gamma'(t))$ and $\{e_{r+1}(t),...,e_{n-1}(t)\}$ is basis for $B(\gamma(t),\gamma'(t))$. This can be done by choosing such basis for $T_{\gamma(0)}M$ and then considering its parallel transport, since the metric is locally symmetric the respective vectors are still in the respective spaces $A$ and $B$. Define the Fermi coordinates $\Psi: [0,T]\times (-\ve_0,\ve_0)^{n-1}\rarrow M$ by
$$\Psi(t,x)=\mrm{exp}_{\gamma(t)}(x_1e_1(t)+...+x_{n-1}e_{n-1}(t)),$$
with $\ve_0$ less than the injective radius. In particular, for Fermi coordinates we have that $g_{ij}(t,0)=\delta_{ij}$ and $\Gamma_{ij}^k(t,0)=0$. For each $\ve<\ve_0$ we define the following sets

\begin{itemize}
    \item $U:=[0,T]\times (-\ve_0,\ve_0)^{n-1}$.
    \item $U(\ve):=[0,T]\times (-\ve,\ve)^{n-1}$.
    \item $B(\gamma,\ve)=\Psi(U(\ve))$.
\end{itemize}
The choice of $\ve$ small enough will be important to control the deformation of the metric up to the first derivatives of its component functions. 

We consider a conformal deformation of the initial metric given by $\til{g}=\phi g$, with $\phi=\mrm{e}^h$ and $h$ supported in $U(\ve)$ to be determined. We will construct the function $h$ to maintain $\gamma$ as geodesic for new metric and that $\til{g}$ is $C^1-$close to $g$ and $C^2-$far. It means that we will make a small perturbation on the norms and Christoffel symbols of $g$, but a deformation on the curvature. Notice that if $h$ is $C^1$ small, then $\til{g}$ is $C^1$ close to $g$. Besides that, all pairs of orthogonal vectors for the metric $g$ are still orthogonal for the metric $\til{g}$. Furthermore, if we set $h(t,0)=0$, then $\{e_0(t),....,e_{n-1}(t)\}$ is still an orthonormal basis for $T_{\gamma(t)}M$ considering the metric $\tilde{g}$.

The first property we need to obtain for the new metric is that $\gamma$ is still a $\til{g}-$geodesic. For any curve $\alpha$ we will denote by $\til{D}_t = \til{\nabla}_{\alpha'}(\cdot)$ and $D_t=\nabla_{\alpha'}(\cdot)$ the covariant derivative along  $\alpha$ gave by the Levi-Civita connection of the metrics $\til{g}$ and $g$, respectively. By the equation \eqref{conformal connection} we get for $\gamma$
\[
\til{D}_t\gamma'=D_t\gamma'+\gamma'(h)\gamma'-\nabla h|_{\gamma}=g(\nabla h,\gamma')\gamma'-\nabla h|_{\gamma}
\]
It is sufficient  to have that $\gamma$ is a critical set for $h$, i.e. $\nabla h|_{\gamma'}=0$. Furthermore, in this case, by the same calculation every parallel vector field for the metric $g$ along $\gamma$ is still parallel for the metric $\til{g}$, in particular for every $k=1,...,n-1$ the vector field $e_k(t)$ is parallel along $\gamma$ for the metric $\til{g}$.

We now investigate the new curvature in Fermi coordinates. We are interested in $\til{K}\left(\dfrac{\partial}{\partial x^k},\gamma'\right)$ given by  the equation \eqref{conformal sectional curvature}. In coordinates we have
\[
\nabla h=g^{ij}\dfrac{\partial h}{\partial x^j}\dfrac{\partial}{\partial x^i}
\]
and
\begin{align*}
    \nabla_X(\nabla h)&=X\left(g^{ij}\dfrac{\partial h}{\partial x^j}\right)\dfrac{\partial}{\partial x^i}+g^{ij}\dfrac{\partial h}{\partial x^j}\nabla_X\left(\dfrac{\partial}{\partial x^i}\right)\\
                    &=\left(X(g^{ij})\dfrac{\partial h}{\partial x^j}+g^{ij}X\left(\dfrac{\partial h}{\partial x^j}\right)\right)\dfrac{\partial}{\partial x^i}+g^{ij}\dfrac{\partial h}{\partial x^j}\nabla_X\left(\dfrac{\partial}{\partial x^i}\right)
\end{align*}
If $X_k:=\dfrac{\partial}{\partial x^k}$, then
\begin{align*}
    \nabla_{X_k}(\nabla h)&=\left(X_k(g^{ij})\dfrac{\partial h}{\partial x^j}+g^{ij}X_k\left(\dfrac{\partial h}{\partial x^j}\right)\right)\dfrac{\partial}{\partial x^i}+g^{ij}\dfrac{\partial h}{\partial x^j}\nabla_{X_k}X_i\\
    &=\left(X_k(g^{ij})\dfrac{\partial h}{\partial x^j}+g^{ij}\dfrac{\partial^2 h}{\partial x^k\partial x^j}\right)X_i+g^{ij}\dfrac{\partial h}{\partial x^j}\Gamma_{ki}^lX_l\\
    &=\left(X_k(g^{ij})\dfrac{\partial h}{\partial x^j}+g^{ij}\dfrac{\partial^2 h}{\partial x^k\partial x^j}\right)X_i+g^{lj}\dfrac{\partial h}{\partial x^j}\Gamma_{kl}^iX_i\\
    &=\left(X_k(g^{ij})\dfrac{\partial h}{\partial x^j}+g^{ij}\dfrac{\partial^2 h}{\partial x^k\partial x^j}+g^{lj}\dfrac{\partial h}{\partial x^j}\Gamma_{kl}^i\right)X_i
\end{align*}
Now, using Lemma \ref{derivative of product metric by  inverse} we get
\begin{align*}
g(\nabla_{X_k}\nabla h,X_k)&=\left(X_k(g^{ij})\dfrac{\partial h}{\partial x^j}+g^{ij}\dfrac{\partial^2 h}{\partial x^k\partial x^j}+g^{lj}\dfrac{\partial h}{\partial x^j}\Gamma_{kl}^i\right)g_{ik}\\
&=X_k(g^{ij})g_{ik}\dfrac{\partial h}{\partial x^j}+g^{ij}g_{ik}\dfrac{\partial^2 h}{\partial x^k\partial x^j}+g^{lj}g_{ik}\dfrac{\partial h}{\partial x^j}\Gamma_{kl}^i\\
&=\partial_k(g^{ij})g_{ik}\partial_j h+\partial_{kk}^2 h+g^{lj}g_{ik}\partial_j h\Gamma_{kl}^i\\
&=-g^{ij}\partial_kg_{ik}\partial_j h+\partial_{kk}^2 h+g^{lj}g_{ik}\partial_j h\Gamma_{kl}^i\\
&=-g^{ij}(\Gamma_{ki}^lg_{lk}+\Gamma_{kk}^lg_{li})\partial_j h+\partial_{kk}^2 h+g^{lj}g_{ik}\partial_j h\Gamma_{kl}^i\\
&=-g^{ij}g_{li}\Gamma_{kk}^l\partial_jh+\partial_{kk}^2h\\
&=-\delta_{l}^j\Gamma_{kk}^l\partial_jh+\partial_{kk}^2h\\
&=-\Gamma_{kk}^j\partial_jh+\partial_{kk}^2h.
\end{align*}
In particular
\[
g(D_t\nabla h,\gamma')=\partial_{00}^2h-\Gamma_{00}^k\partial_kh=\partial_{00}^2h(t,x).
\]
Then the new sectional curvatures along $\gamma$ are given by
\[
\phi\til{K}(X_k,\gamma')=K(X_k,\gamma')-\frac{1}{2}\partial_{kk}^2h(t,0)-\frac{1}{2}\partial_{00}^2h(t,0).
\]
If $h$ does not depend on $t$, we get

\begin{enumerate}
    \item $\phi\til{K}(X_k,\gamma')=-1-\frac{1}{2}\partial_{kk}^2h(0)$, for $k=1,...,r$.
    \item $\phi\til{K}(X_k,\gamma')=-\frac{1}{4}-\frac{1}{2}\partial_{kk}^2h(0)$, for $k=r+1,...,n-1$.
\end{enumerate}
\begin{remark}
    Given a geodesic $\alpha$ and a parallel orthonormal vector field $X(t)$ along $\alpha(t)$, we have that $X(t)$ is a Jacobi Field if, and only if, $K(X,\alpha')(t)\equiv 0$.
\end{remark}
By the above remark, if we choose the function  $h$ such that for some $s\in \{r+1,...,n-1\}$ we have $-\frac{1}{2}\partial_{ss}^2h(0)=\frac{1}{4}$, then $e_s(t)$ is a Jacobi Field for the metric $\til{g}$ along $\gamma'$. From Proposition \ref{Eberleins criterion} we conclude that the geodesic flow for $\til{g}$ is not Anosov. Concretely, we have
\[
d\til{g}_t(e_s(0),0)=(e_s(t),0)
\]
and so, for every $t\in \R$,
\[
\norm{d\til{g}_t(e_s(0),0)}=\norm{e_s(t)}=\norm{e_s(0)}.
\]
The last equality is due to the fact that $e_s(t)$ is parallel. It means that the derivative of the geodesic flow $\til{g}_t$ acts as an isometry on the vector $(e_s(t),0)$. In summary, to break the Anosov property of the metric $g$ it  is enough that $h$ satisfies the following conditions
\begin{enumerate}
    \item $h$ does not depend on $t$.
    \item $h(0)=0$ and $\nabla h(0)=0$.
    \item There exist $s\in \{r+1,...,n-1\}$ such that $-\frac{1}{2}\partial_{ss}^2h(0)=\frac{1}{4}$
\end{enumerate}
To avoid the creation of planes with positive curvature along $\gamma$ we are going to choose $h$ such that the property $3.$ above is satisfied by a unique index $s$ and such that $\partial_{kk}^2h(0)=0$ for all other indices. By doing so we guarantee that $\gamma'$ is a partially hyperbolic orbit for $\til{g}_t$ as described below.

We now construct the function $h$ with the above properties. For $n\geq  2$, let $s_{n}$ be the following functions $s_n:\R\rarrow \R$
\[
s_n(x)=
\begin{cases}
1/8(x+1)^{2n}(x-1)^{2n},&x\in[-1,1]\\
0, &\mbox{ otherwise }
\end{cases}
\]
Define also the functions $r_n(x)=x^2s_n(x)$. The family $\{r_n\}_n$ is smooth in $\R\setminus\{\pm1\}$ and is of class $C^{2n}$ in $\{\pm1\}$. It is easy to see that
\begin{itemize}
    \item $r_n(0)=0$.
    \item $r_n'(0)=0$.
    \item $r_n''(0)=\frac{1}{4}$.
\end{itemize}
It is also not difficult to see that $|r_n''(x)|<\frac{1}{4}$ if $x\neq 0$. Now, let $f_n:\R\rarrow\R$ be the function $f_n(x)= x^2 s_n\left(\frac{x}{\ve^2}\right)$ which is smooth in $\R\setminus\{\pm \ve^2\}$ and of class $C^{2n+2}$ in $\{\pm \ve^2\}$. The family $\{f_n\}_n$ satisfies the same properties as $r_n$, we just changed the support of it to be $[-\ve^2,\ve^2]$ and we also have $f_n(x)\leq \ve^4$. The graphs of $r_n, r_n'$, and $r_n''$ for $n=2$ are shown below in Figure 1.
\begin{figure}[!htb]
 \centering
 \label{fig:images}
 \subfloat[Graph of $r_2$]{%
      \includegraphics[width=0.29\textwidth]{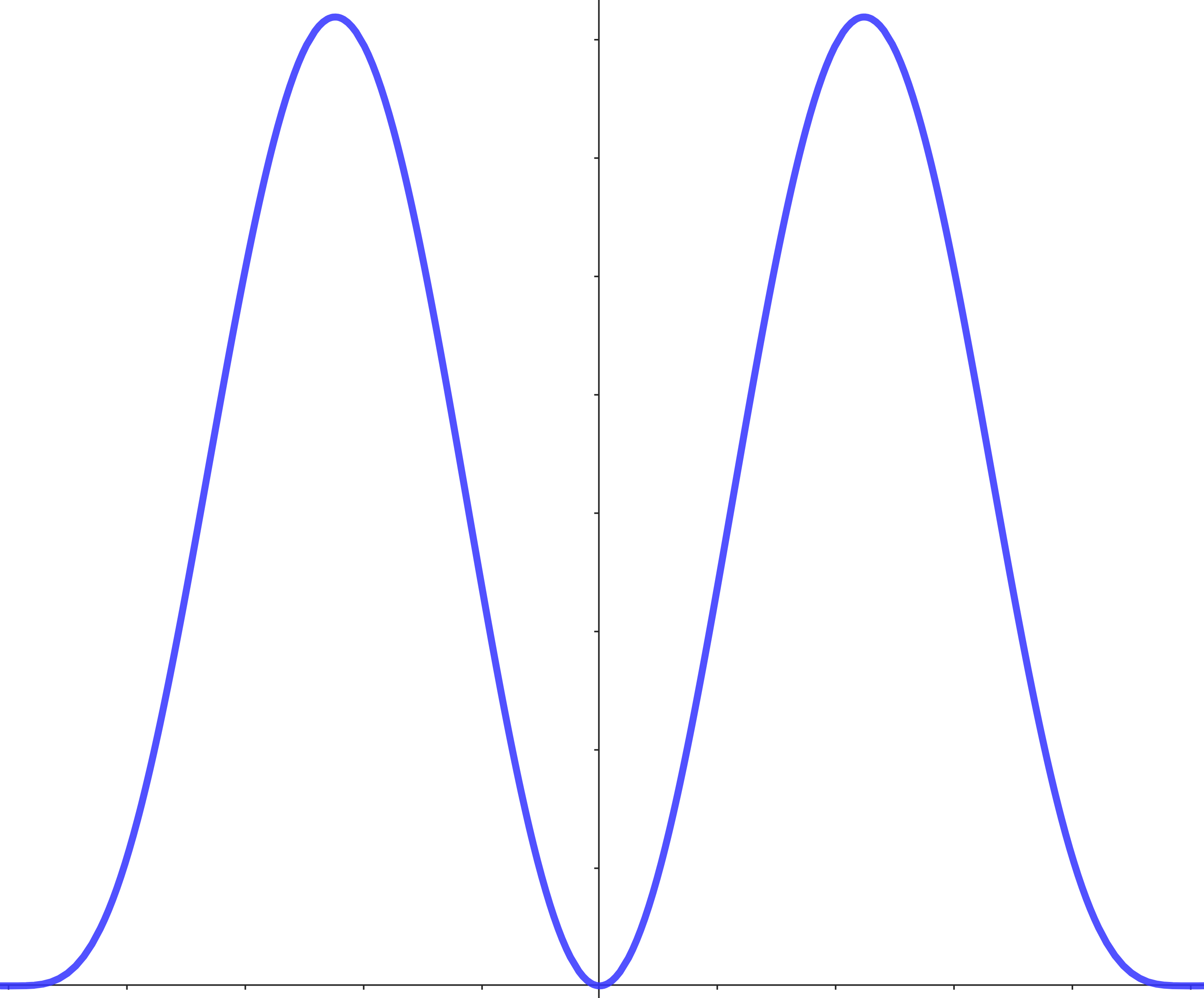}}
      \label{fig:image-a}
 \qquad
 \subfloat[Graph  of  $r'_2$]{%
      \includegraphics[width=0.28\textwidth]{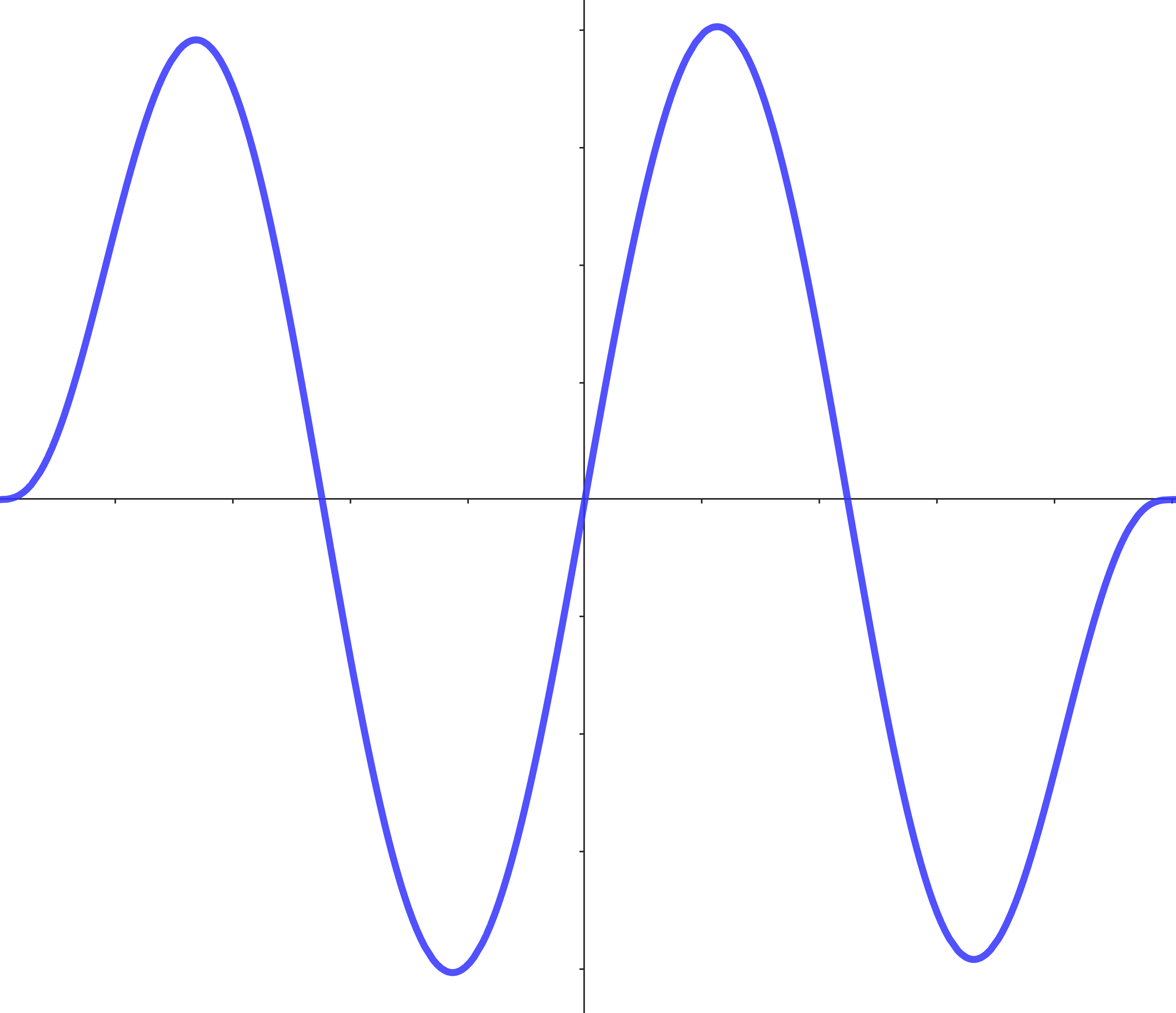}}
      \label{fig:image-b}
 \qquad
 \subfloat[Graph  of  $r''_2$]{%
      \includegraphics[width=0.28\textwidth]{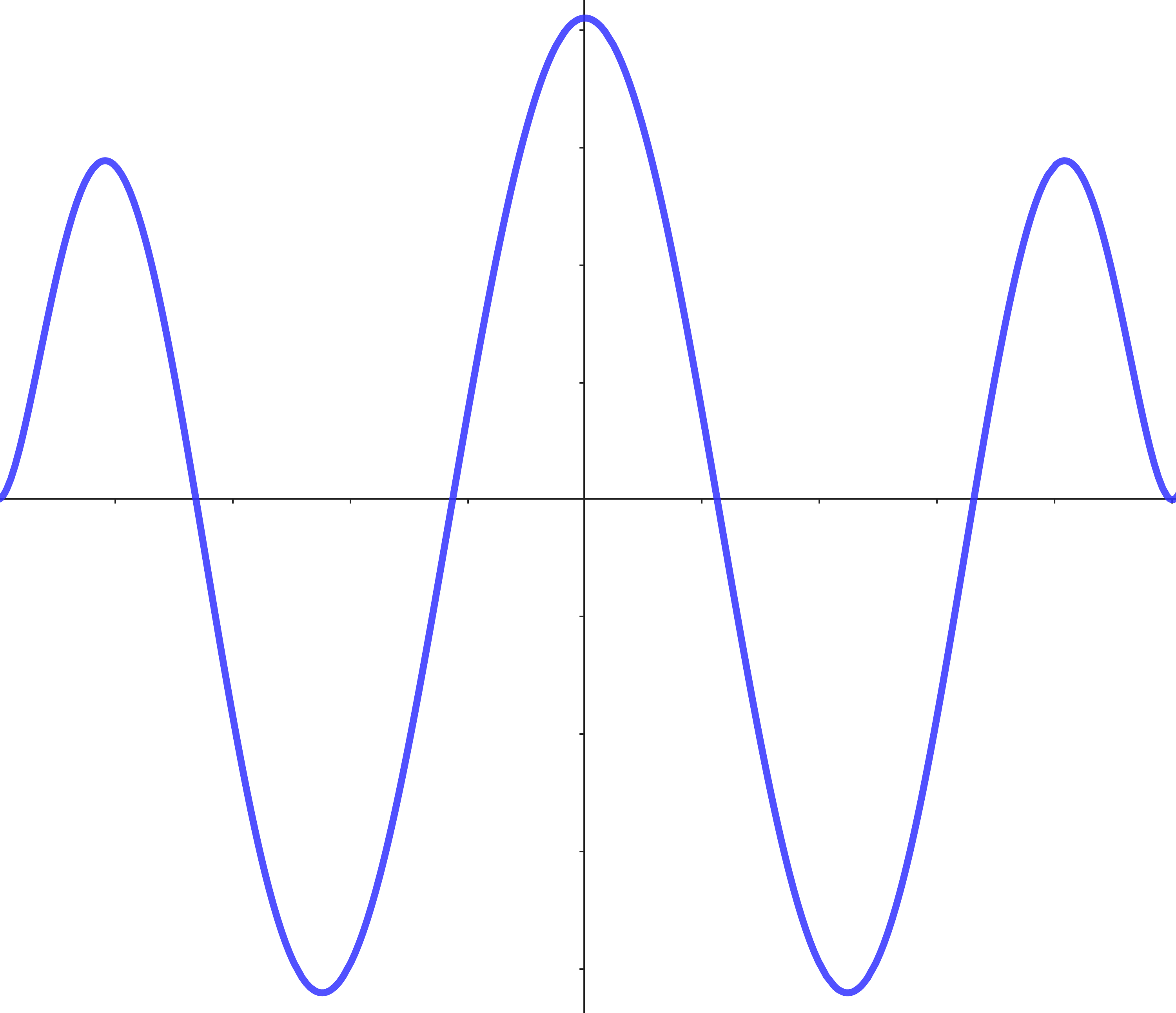}}
      \label{fig:image-c}
      \caption{Behavior of $r_2$}%
\end{figure}

\noindent Finally, for a fixed $s\in \{r+1,...,n-1\}$ we define $h_k$ as
\[
h_k(x_0,x_1,...,x_{n-1}):=\phi\left(\frac{x_1}{\ve}\right)\cdots \left(-2f_k(x_s)\right)\cdots\phi\left(\frac{x_{n-1}}{\ve}\right),
\]
where $\phi(x)$ are bump functions  such that $\phi(0)=1$ and supported in $[-1,1]$. For simplicity, let us denote  $\Phi(x)=\phi\left(\frac{x_1}{\ve}\right)\cdots\phi\left(\frac{x_{n-1}}{\ve}\right)$. Notice  that
$h_k$ is smooth outside  the region $x_s=\pm \ve^2$, where it is of class $C^{2k+2}$.

By doing so, we get the following properties
\begin{lemma}
$h_k$ satisfies the following properties
\begin{enumerate}
    \item $h_k(0)=0$.
    \item $h_k$ does not depend on $t=x_0$.
    \item $\partial_j h_k(0)=0$, for all $j$.
    \item $\partial_{ij}h_k(0)=0$, if $i\neq j$ or $i=j\neq s$.
    \item $\partial_{ss}h_k(0)=-\frac{1}{2}$.
\end{enumerate}
\end{lemma}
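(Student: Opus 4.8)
The plan is to exploit the product structure of $h_k$ together with the vanishing of $f_k$ and its first derivative at the origin. First I would abbreviate $\Phi_s(x):=\prod_{j\neq s,\,1\leq j\leq n-1}\phi(x_j/\ve)$, the product of all the bump factors \emph{except} the $s$-th one, so that $h_k=\Phi_s\cdot(-2f_k(x_s))$. By construction $\Phi_s$ is independent of both $x_0$ and $x_s$, and the factor $-2f_k(x_s)$ is independent of $x_0$; this immediately gives property $2$. Property $1$ is equally direct, since at the origin the factor $-2f_k(x_s)$ equals $-2f_k(0)=0$.

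For the first derivatives I would apply the product rule. Differentiating in a direction $x_j$ with $j\neq s$ keeps the undifferentiated factor $-2f_k(x_s)$ in every resulting term, and this vanishes at the origin; differentiating in the direction $x_s$ produces $\Phi_s\cdot(-2f_k'(x_s))$, which vanishes at the origin because $f_k'(0)=0$. Together with the trivial direction $x_0$ this gives $\partial_j h_k(0)=0$ for all $j$, which is property $3$.

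The second derivatives follow from the same bookkeeping, made simpler by the fact that $\Phi_s$ depends on neither $x_s$ nor $x_0$. When at least one of $i,j$ differs from $s$, every surviving term of the Leibniz expansion of $\partial_{ij}h_k$ retains a factor $-2f_k(x_s)$ or $-2f_k'(x_s)$, both of which vanish at the origin; this yields property $4$. When $i=j=s$, the independence of $\Phi_s$ from $x_s$ leaves only the term $\Phi_s\cdot(-2f_k''(x_s))$, and evaluating at the origin with $\Phi_s(0)=\prod\phi(0)=1$ and $f_k''(0)=\tfrac14$ gives $\partial_{ss}h_k(0)=-2\cdot\tfrac14=-\tfrac12$, which is property $5$.

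The argument is entirely elementary; the only point that requires care is the combinatorial bookkeeping of the product rule, namely checking that every term forced to vanish retains an undifferentiated or once-differentiated copy of $f_k(x_s)$ evaluated at $x_s=0$. The whole lemma rests on the three values $f_k(0)=f_k'(0)=0$ and $f_k''(0)=\tfrac14$ inherited from $r_k$, which are precisely what make the cancellations occur.
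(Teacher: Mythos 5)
Your proof is correct and follows exactly the argument the paper intends: the paper states this lemma without proof, treating it as immediate from the product structure of $h_k$ and the values $f_k(0)=f_k'(0)=0$, $f_k''(0)=\tfrac14$, which is precisely the product-rule bookkeeping you carry out.
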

Notice that
\[
    |-2f_k(x)|\leq 2\ve^4\left|r_k\left(\frac{x}{\ve^2}\right)\right|\leq \frac{2}{100}\ve^4<\ve^4,
\]
\begin{align*}
   |-2f_k'(x)|&=2\left|2xr_k\left(\frac{x}{\ve^2}\right)+x^2 r_k'\left(\frac{x}{\ve^2}\right)\frac{1}{\ve^2}\right|\\
   &\leq 4\ve^2\frac{1}{100}+\ve^2\frac{4}{100}< \ve^2
\end{align*}
We also have $(\phi\left(\frac{x}{\ve}\right))'\leq\frac{2}{\ve}$ and $(\phi\left(\frac{x}{\ve}\right))''\leq \frac{M}{\ve^2}$. Once $|\phi(x)|\leq 1$, it follows that

\begin{lemma}
\label{estimates on h}
The following inequalities hold
    \begin{enumerate}
    \item $|h_k(x)|\leq \ve^4$.
    \item $|\partial_j h_k|\leq \frac{2}{\ve}\ve^4=2\ve^3$, if $j\neq s$.
    \item $|\partial_s h_k|\leq \ve^2$.
    \item $|\partial_{ij}h_k|\leq \frac{2}{\ve}\frac{2}{\ve}\ve^4=4\ve^2$, if $i\neq j$ and non of them are equal to $s$.
    \item $|\partial_{sj}h_k|\leq \ve^2\frac{2}{\ve}=2\ve$, if $i\neq s$.
    \item $|\partial_{ii}h_k|\leq \frac{M}{\ve^2}\ve^4=M\ve^2$, if $i\neq s$.
    \item $|\partial_{ss}h_k|\leq 2|f_k''(x_s)|\leq2\left|r_k''\left(\frac{x_s}{\ve^2}\right)\right| \leq\frac{1}{2}$.
\end{enumerate}
\end{lemma}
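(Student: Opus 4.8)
The plan is to read off every bound from the \emph{variable-separated product structure} of $h_k$. By construction $h_k$ is independent of $t=x_0$ and factors as $h_k=\big(\prod_{j\neq s}\phi(x_j/\ve)\big)\cdot\big(-2f_k(x_s)\big)$, a product of functions each depending on a single one of the coordinates $x_1,\dots,x_{n-1}$. Consequently the Leibniz rule trivializes: differentiating $h_k$ in the direction $x_i$ acts only on the single factor carrying the index $i$, so every partial derivative of $h_k$ is again a product of the \emph{same} factors in which only those factors whose index matches a differentiation variable have been differentiated, and all untouched bump factors remain. Since $|\phi|\le 1$, each surviving bump factor contributes at most $1$ to the bound, and so every estimate collapses to the product of the one-variable bounds of the differentiated factors.

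First I would record, as building blocks, the single-variable estimates already established just before the statement: $|{-2f_k}|\le\ve^4$, $|{-2f_k'}|\le\ve^2$, together with $|(\phi(x_j/\ve))'|\le 2/\ve$ and $|(\phi(x_j/\ve))''|\le M/\ve^2$; and, using $f_k(x)=\ve^4 r_k(x/\ve^2)$ (hence $f_k''(x)=r_k''(x/\ve^2)$) together with the global bound $|r_k''|\le\frac14$, the estimate $|{-2f_k''}|\le\frac12$. With these in hand I would then run through the seven items by simply accounting for how the one or two differentiation indices distribute between the distinguished $s$-slot and the bump slots. For no derivative, $|h_k|\le 1\cdot\ve^4$; for one derivative on a bump slot $j\neq s$, one factor $2/\ve$ times the $s$-slot $\ve^4$ gives $2\ve^3$; for one derivative on the $s$-slot, the factor becomes $-2f_k'$, bounded by $\ve^2$; for two derivatives on distinct bump slots $i\neq j$ (neither $s$), $\frac2\ve\cdot\frac2\ve\cdot\ve^4=4\ve^2$; for one derivative on a bump slot and one on the $s$-slot, $\frac2\ve\cdot\ve^2=2\ve$; for two derivatives on a single bump slot $i\neq s$, $\frac{M}{\ve^2}\cdot\ve^4=M\ve^2$; and for two derivatives on the $s$-slot, the factor becomes $-2f_k''$, bounded by $\frac12$. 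These are precisely items $1$ through $7$.

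I do not expect a genuine obstacle here: the entire lemma is a bookkeeping exercise in the product rule once the separation of variables is observed. The only points requiring a little care are, first, that the $\partial_{ss}$ bound must invoke the \emph{global} estimate $|r_k''|\le\frac14$ rather than the value $\frac14$ attained only at the origin, so that $|{-2f_k''}|\le\frac12$ holds on the whole support; and second, that in item $5$ the stated hypothesis ``$i\neq s$'' should read ``$j\neq s$'', since the free (differentiated bump) index is $j$ while the other derivative falls on the $s$-slot. Keeping the indexing honest and bounding each of the untouched bump factors by $1$ is all that the argument requires.
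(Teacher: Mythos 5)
Your proof is correct and follows essentially the same route as the paper: the text establishes exactly the one-variable bounds $|{-2f_k}|\leq\ve^4$, $|{-2f_k'}|\leq\ve^2$, $|(\phi(x/\ve))'|\leq 2/\ve$, $|(\phi(x/\ve))''|\leq M/\ve^2$, $|\phi|\leq 1$ immediately before the lemma and then deduces the seven items from the separated product structure of $h_k$, which is precisely your bookkeeping argument (including the identity $f_k''(x)=r_k''(x/\ve^2)$ with the global bound $|r_k''|\leq\tfrac14$ for item 7). Your observation that the hypothesis ``$i\neq s$'' in item 5 should read ``$j\neq s$'' correctly identifies a typo in the paper's statement.
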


Now fix some $k_0\geq 2$ and call $h=h_{k_0}$.

\section{The new geodesic flow is partially hyperbolic}
\label{The new geodesic flow is Partially Hyperbolic}
We start this section by illustrating the behavior of the new flow. In the next Lemma, we see that the orbit $\{\til{g}_t(\gamma'(0))\}$ is not hyperbolic, but it is still partially hyperbolic. We also can verify precisely what the invariant splitting for the deformed flow is. It will lead to the analysis of other trajectories $\til{g}_t$.

\begin{lemma}
\label{gamma is partially hyperbolic}
    The orbit $\{\til{g}_t(\gamma'(0))\}$ is partially hyperbolic.
\end{lemma}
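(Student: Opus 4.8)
The plan is to linearize the deformed flow along the closed orbit $\gamma$ and diagonalize the corresponding Jacobi operator. Since $h(t,0)=0$ and $\nabla h$ vanishes along $\gamma$, the factor $\phi=\mrm{e}^h$ equals $1$ on $\gamma$ and the frame $\{e_1(t),\dots,e_{n-1}(t)\}$ stays $\til g$-orthonormal and $\til g$-parallel, as already noted. By the identification \eqref{eq4}, the action of $(d\til g_t)$ on $T(T^1M)/\langle G\rangle$ over $\gamma$ is carried by the perpendicular Jacobi fields $J=\sum_k y_k\, e_k$ solving \eqref{Jacobi} for the deformed curvature, so it suffices to understand the operator $w\mapsto \til R(w,\gamma')\gamma'$ along $\gamma$.

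The decisive step is that $\{e_k\}$ diagonalizes this operator. Evaluating \eqref{conformal curvature} at $X=e_k$, $Y=Z=\gamma'$ on $\gamma$, every term containing $h$ or $\nabla h$ to order zero or one vanishes, leaving only $-\tfrac12\nabla_{e_k}\nabla h$. In Fermi coordinates $\operatorname{Hess}(h)|_\gamma$ has components $\partial^2_{ij}h(t,0)$, and the construction gives $\partial^2_{ij}h(0)=-\tfrac12\,\delta_{is}\delta_{js}$; hence $\nabla_{e_k}\nabla h$ equals $-\tfrac12 e_s$ for $k=s$ and $0$ otherwise. Combining this with the locally symmetric identity \eqref{curvature locally symmetric} yields
\begin{equation*}
\til R(e_k,\gamma')\gamma'=
\begin{cases}
-e_k, & k=1,\dots,r,\\[2pt]
-\tfrac14\,e_k, & k\in\{r+1,\dots,n-1\}\setminus\{s\},\\[2pt]
0, & k=s.
\end{cases}
\end{equation*}
Because $g$ is locally symmetric and $h$ is independent of $t$, these eigenvalues are constant along $\gamma$, so \eqref{Jacobi} decouples into the scalar equations $y_k''=-\lambda_k y_k$ with $\lambda_k\in\{-1,-\tfrac14,0\}$.

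I would then read off the splitting from the explicit solutions. The strong ($A$) directions give $y_k\in\mathrm{span}\{e^{t},e^{-t}\}$, the remaining weak ($B$) directions give $y_k\in\mathrm{span}\{e^{t/2},e^{-t/2}\}$, and the distinguished direction $e_s$ gives $y_s\in\mathrm{span}\{1,t\}$. Setting $E^{ss}$ to be the span of the decaying strong solutions, $E^{uu}$ the span of the growing strong solutions, and $E^c$ the span of all weak and neutral blocks, each is $(d\til g_t)$-invariant because the linearized system is block-diagonal with constant coefficients. Finally one checks $\|(d\til g_t)|_{E^{ss}}\|\le Ce^{-t}$, $\|(d\til g_{-t})|_{E^{uu}}\|\le Ce^{-t}$ and $\|(d\til g_{\pm t})|_{E^c}\|\le Ce^{t/2}$, which give the domination estimates $\|(d\til g_t)|_{E^{ss}}\|\cdot\|(d\til g_{-t})|_{E^c}\|\le Ce^{-t/2}$ and $\|(d\til g_t)|_{E^c}\|\cdot\|(d\til g_{-t})|_{E^{uu}}\|\le Ce^{-t/2}$. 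This is precisely a partially hyperbolic splitting over the orbit; it is non-Anosov because $E^c$ contains the parallel Jacobi field $e_s$, on which $(d\til g_t)$ acts by isometries and the transverse Lyapunov exponent vanishes.

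The main obstacle is the diagonalization: one must verify that the off-diagonal entries $\til g(\til R(e_k,\gamma')\gamma',e_j)$ with $j\neq k$ indeed vanish, which is what makes the Jacobi system decouple and the bundles explicit; this rests on the single-entry form of $\operatorname{Hess}(h)$ on $\gamma$ together with \eqref{curvature locally symmetric}. A secondary difficulty is the neutral block: since $\lambda_s=0$ produces a nilpotent Jordan block with linear (rather than exponential) growth, the center cannot be controlled by naive exponential bounds and must instead be handled through the domination inequalities above, invoking Lemma \ref{dominated splitting in symplectic} if a two-sided exponential statement for $E^{ss}$ and $E^{uu}$ is desired.
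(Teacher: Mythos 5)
Your proposal is correct and follows essentially the same route as the paper: both diagonalize the deformed Jacobi operator $w\mapsto \til R(w,\gamma')\gamma'$ in the parallel frame (using the single nonzero Hessian entry $\partial_{ss}^2h(0)=-\tfrac12$ together with the locally symmetric identity \eqref{curvature locally symmetric}), decouple the Jacobi equation into constant-coefficient scalar ODEs with rates $\pm 1$, $\pm\tfrac12$, $0$, and read off the invariant splitting from the explicit solutions. The only cosmetic differences are that the paper keeps the weak bundles $E^{s}$, $E^{u}$ separate from $E^{c}=\mathrm{span}\{(e_s,0)\}$ whereas you group them into a single center, and that you spell out the exponential and domination estimates that the paper leaves implicit.
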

\begin{proof}
    We will analyze the Jacobi equation for the new metric along $\gamma$. Define by $\mc{J}(\gamma)$ the space of Jacobi fields orthogonal to $\gamma'$. Since $\{e_1(t),...,e_{n-1}(t)\}$ is an orthonormal parallel frame along $\gamma$ we can write $J\in\mc{J}(\gamma)$ as $J=J^i(t)e_i(t)$, then denoting $\til{D}_t^2J=J''$ we get
    \[
J''=(J^i)''e_i=-J^i\til{R}(e_i,\gamma')\gamma'.
    \]
    Then, we get
    \[
    \til{g}(J'',e_j)=(J^i)''\til{g}_{ij}=-J^i\til{K}_{ij},
    \]
    with $\til{K}_{ij}=\til{g}(\til{R}(e_i,\gamma')\gamma',e_j)=\til{R}(e_i,\gamma',\gamma',e_j)$. From the computations above we have the following relation
    \[
\tilde{R}(e_i,\gamma',\gamma',e_j) = R(e_i,\gamma',\gamma',e_j)-\frac{1}{2}\partial_{ij}^2 h.
    \]
    We get
    \begin{enumerate}
        \item $\til{R}(e_i,\gamma',\gamma',e_j)=R(e_i,\gamma',\gamma',e_j)$, for $i\neq j$ and $i=j\neq s$.
        \item $\til{R}(e_s,\gamma',\gamma',e_s)=R(e_s,\gamma',\gamma',e_s)+\frac{1}{4}=0$.
    \end{enumerate}
    
 Since the metric $g$ is locally symmetric of nonconstant negative curvature, then we can apply the relation \eqref{curvature locally symmetric}. Notice that, $e_i=(e_i)_A$ and $(e_i)_B=0$ if $i=1,...,r$ and if $i=r+1,...,n-1$, then  $e_i=(e_i)_B$ and $(e_i)_A=0$ . Hence if $i\neq j$
\[
R(e_i,\gamma',\gamma',e_j) = \lambda g(e_i,e_j)=0,
\]
where $\lambda\in\left\{\frac{1}{4},1\right\}$ depending on $i$ and $j$. For $i=j\in\{1,...,r\}$
\[
\tilde{R}(e_i,\gamma',\gamma',e_i) = R(e_i,\gamma',\gamma',e_i)=-1.
\]
For $i=j\in \{r+1,...,n-1\}\setminus\{s\}$
\[
\tilde{R}(e_i,\gamma',\gamma',e_i) = R(e_i,\gamma',\gamma',e_i)=-\frac{1}{4}.
\]
We conclude that the matrix $\tilde{K}=(\tilde{R}(e_i,\gamma',\gamma',e_j))_{ij}$ is

\[
\tilde{K}= \begin{bmatrix}
-Id_r  & 0 \\
0 & I_{n-r-1}
\end{bmatrix}
\]
with $I_{n-r-1}$ the diagonal matrix with entries $-\frac{1}{4}$ but the entry $(I_{n-r-1})_{s^*s^*}=0$, with $s^*=s-r$. This implies that 
\[
\begin{cases}
    (J^i)''=J^i, &\mbox{for }i=1,...,r\\
    (J^i)''=\frac{1}{4}J^i, &\mbox{for }i=r+1,...,n-1 \mbox{ and } i\neq s\\
    (J^s)''=0
\end{cases}
\]
it implies that the ODE solutions are
\[
\begin{cases}
    J^i=\left(\frac{J^i(0)-(J^i)'(0)}{2}\right)e^{-t}+\left(\frac{J^i(0)+(J^i)'(0)}{2}\right)e^t, &\mbox{for }i=1,...,r\\
    J^i=\left(\frac{J^i(0)-2(J^i)'(0)}{2}\right)e^{-\frac{t}{2}}+\left(\frac{J^i(0)+2(J^i)'(0)}{2}\right)e^{\frac{t}{2}}, &\mbox{for }i=r+1,...,n-1 \mbox{ and } i\neq s\\
    J^s=(J^s)'(0)t+J^s(0)
\end{cases}
\]
From this, we can see that the splitting along $\gamma'$ is given by 

\begin{align*}
    E^{ss}(\gamma')=\,&\{(J,J'): J\in \mc{J}(\gamma), J^i(0)=-(J^i)'(0),  \mbox{for } i=1,...,r\\
                    &\mbox{ and } J^i(0)=(J^i)'(0)=0 \mbox{ otherwise}\},\\
    E^{uu}(\gamma')=\,&\{(J,J'): J\in \mc{J}(\gamma), J^i(0)=(J^i)'(0),  \mbox{for } i=1,...,r\\
                    &\mbox{ and } J^i(0)=(J^i)'(0)=0 \mbox{ otherwise}\},\\
    E^{s}(\gamma')\,\, =\, &\{(J,J'): J\in \mc{J}(\gamma), J^i(0)=-2(J^i)'(0),  \mbox{for } i=r+1,...,n-1, i\neq s\\
    &\mbox{ and } J^i(0)=(J^i)'(0)=0 \mbox{ otherwise}\},\\
    E^{u}(\gamma')\,\, =\,&\{(J,J'): J\in \mc{J}(\gamma), J^i(0)=2(J^i)'(0),  \mbox{for } i=r+1,...,n-1, i\neq s\\
    &\mbox{ and } J^i(0)=(J^i)'(0)=0 \mbox{ otherwise}\},\\
    E^c(\gamma)\,\,=\, \, &\mrm{span}\{(e^s(t),0)\}
\end{align*}
\end{proof}

\subsection{Cone invariance for the initial metric}
In this section, we will analyze the behavior of other obits of the geodesic flow $\til g_t$. For completeness, we first see that the contact structure for the initial metric splits in four invariant subbundles as it is done in \cite{CARNEIRO&PUJALS14}. The contact structure $S(T^1M) \rarrow T^1M$ has naturally the following identification: $S(x,v)=(A(x,v)\oplus B(x,v))^2$. As mentioned before, the locally symmetric property implies that the spaces $A(x,v)$ and $B(x,v)$ are parallel, i.e. if $V_0 \in A(x,v)$ and $V(t)$ is the vector field obtained by parallel transport along a smooth curve $\gamma$, then $V(t)\in A(\gamma(t),\gamma'(t))$. Analogously for $B(x,v)$.

\begin{lemma}[\protect{\cite[Lemma~4.1]{CARNEIRO&PUJALS14}}]
\label{four invariant bundles}
    The geodesic flow of the locally symmetric spaces of nonconstant negative curvature induces a hyperbolic splitting of the contact structure defined on $T^1M$: 
    \[
    S(T^1M)=E^{ss}\oplus E^s\oplus E^u\oplus E^{uu}.
    \]
\end{lemma}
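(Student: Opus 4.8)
The plan is to exploit the pointwise diagonalization of the Jacobi operator provided by \eqref{curvature locally symmetric} together with the fact that $A(x,v)$ and $B(x,v)$ are parallel, which reduces the analysis of $dg_t$ on $S(T^1M)$ to two decoupled second order systems. Recall that under the identification $S(x,v)=(A(x,v)\oplus B(x,v))^2$ and the Jacobi-field description \eqref{eq4}, a tangent vector $\xi=(w_1,w_2)\in S(\theta)$ evolves as $(dg_t)_\theta\xi=(J(t),J'(t))$, where $J$ is the Jacobi field along $\gamma_{\theta}$ with $J(0)=w_1$ and $J'(0)=w_2$. Hence the whole statement is a statement about growth rates of Jacobi fields, and the candidate subbundles should be read off directly from the solutions of the Jacobi equation.

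First I would decouple the Jacobi equation. Writing $J=J_A+J_B$ with $J_A=Pr_A J$ and $J_B=Pr_B J$, the relation \eqref{curvature locally symmetric} gives $R(J,\gamma')\gamma'=-J_A-\frac14 J_B$, so the equation $J''+R(J,\gamma')\gamma'=0$ reads $J''=J_A+\frac14 J_B$. Since $A$ and $B$ are parallel along $\gamma_{\theta}$, covariant differentiation commutes with the projections (this is exactly the identity $(X_C)'=X_C'$ recorded in the Preliminaries), so the equation splits into the two independent systems $(J_A)''=J_A$ and $(J_B)''=\frac14 J_B$. Their solutions have growth exponents $\pm1$ on the $A$-part and $\pm\frac12$ on the $B$-part, which immediately yields the four invariant directions
\begin{align*}
E^{ss}(x,v)&=\{(w,-w):w\in A(x,v)\}, & E^{s}(x,v)&=\{(w,-\tfrac12 w):w\in B(x,v)\},\\
E^{u}(x,v)&=\{(w,\tfrac12 w):w\in B(x,v)\}, & E^{uu}(x,v)&=\{(w,w):w\in A(x,v)\}.
\end{align*}

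Then I would check invariance and the estimates directly. If $(w,-w)\in E^{ss}(\theta)$ with $w\in A$, the associated Jacobi field is $J(t)=e^{-t}P_t w$, where $P_t$ is parallel transport; since parallelism keeps $P_t w\in A$, one gets $(J(t),J'(t))=(e^{-t}P_t w,-e^{-t}P_t w)\in E^{ss}(g_t\theta)$, and in the Sasaki norm $\|(dg_t)_\theta\xi\|=\sqrt2\,e^{-t}|w|$. The same computation on $E^{s},E^{u},E^{uu}$ gives contraction/expansion at the rates $e^{-t/2},e^{t/2},e^{t}$, so the four rates are strictly ordered $-1<-\tfrac12<\tfrac12<1$, which is precisely a hyperbolic (and dominated) splitting with no center. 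A dimension count, $\dim E^{ss}+\dim E^{uu}=2r$ and $\dim E^{s}+\dim E^{u}=2(n-1-r)$, confirms that the four bundles fill out all of $S(T^1M)$, whose fibre has dimension $2(n-1)$.

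The main point to handle with care, and the only place where local symmetry is genuinely used, is the decoupling step: it requires that the $A/B$ splitting be preserved by parallel transport along every geodesic, so that the Jacobi operator stays simultaneously diagonalized for all $t$ and the two systems never interact. This is guaranteed by $\nabla R\equiv 0$. The remaining technical matter is the continuity (indeed smoothness) of the four subbundles; this follows because $A(x,v)$ and $B(x,v)$ are the eigenspaces of the smooth self-adjoint Jacobi operator $w\mapsto R(w,v)v$ for the two distinct eigenvalues $-1$ and $-\tfrac14$, whose spectral gap and constant multiplicities make the corresponding spectral projections depend smoothly on $(x,v)$.
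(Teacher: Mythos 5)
Your proof is correct and takes essentially the same approach as the paper: the paper's proof defines exactly the same four subbundles, $P_A^{u,s}(x,v)=\{(w,\pm w): w\in A(x,v)\}$ and $P_B^{u,s}(x,v)=\left\{\left(w,\pm\frac{1}{2}w\right): w\in B(x,v)\right\}$, and establishes invariance and the exponential rates by the same decoupled Jacobi-equation analysis based on the relation \eqref{curvature locally symmetric} and the parallelism of $A$ and $B$ (delegating the computation to Lemma \ref{gamma is partially hyperbolic}). Your write-up merely spells out the details the paper leaves to that reference (explicit solutions, Sasaki norms, dimension count, and continuity of the bundles via the spectral projections).
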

\begin{proof}
Define the invariant subbundles 
\begin{flalign*}
&\begin{aligned}
\, \, \,\,\, \hspace{0.5cm} P_A^u(x,v)&=\{(w,w)\in S(x,v): w \in A(x,v)\},\\
\, \, \,\,\, P_B^u(x,v)&=\left\{\left(w,\frac{1}{2}w\right)\in S(x,v): w \in B(x,v)\right\},\\
\, \, \,\,\, P_A^s(x,v)&=\{(w,-w)\in S(x,v): w \in A(x,v)\},\\
\, \, \,\,\, P_B^s(x,v)&=\left\{\left(w,-\frac{1}{2}w\right)\in S(x,v): w \in B(x,v)\right\}.\\
\end{aligned}&&
\end{flalign*}

    Proceeding as in Lemma \ref{gamma is partially hyperbolic} one can conclude that
    \begin{align*}
        E^{uu}(x,v)=P_A^u(x,v), \,\,\, &E^{ss}(x,v)=P_A^s(x,v)\\
        E^u(x,v)=P_B^u(x,v),\,\,\, &E^s(x,v)=P_B^s(x,v)
    \end{align*}
\end{proof}
We are now going to define the family of cones that are invariant by the geodesic flow. In essence, the cones we are considering are cones around the strong stable and strong unstable spaces given by Lemma \ref{four invariant bundles}. In the next section, we show that the same family of cones is invariant by the geodesic flow of the deformed metric. Despite our deformation does not make a big change on the spaces with smaller curvature, the computations to show the cone invariance are much more complicated. 
\begin{remark}
    Remember that if $E\subset TM$ is subbundle, by denoting by $Pr_E$ the projection to the subspace $E$, we can define the quantity $\Uptheta_E(v)=\frac{g(P_E(v),P_E(v))}{g(v,v)}$ which is equal to the square of the cosine of the angle between the vector $v$ and the space $E$.
\end{remark}
The family of invariant cone fields we are going to consider is the following
\[
\Scale[0.92] {C(v,P_A^{u,s}(x,v),c)=\left\{ (\xi, \eta) \in S_{(x,v)}T^1M: \Uptheta_A^{u,s}(\xi, \eta)= \dfrac{\wh{g}(Pr_{P_A^{u,s}(x,v)}(\xi,\eta),Pr_{P_A^{u,s}(x,v)}(\xi,\eta))}{\wh{g}((\xi,\eta),(\xi,\eta))}\geq c\right\}},
\]
here $\wh{g}$ is the Sasaki metric. By the remark above, it is enough to see that this amount is increasing to guarantee the invariance. For the geodesic flow $g_t$ we want to compute
\[
\dfrac{d}{dt}\Uptheta_{A}^{u,s}((dg_t)_{(x,v)}(\xi,\eta)).
\]
We have that
\[
(dg_t)_{(x,v)}(\xi,\eta) = (J(t),J'(t)),
\]
where $J$ is the Jacobi Field along the geodesic $\gamma:=\pi(g_t(x,v))$ with initial conditions $J(0)=\xi$ and $J'(0)=\eta$. For simplicity of notation, let us denote $J(t)=\xi$ and $J'(t)=\eta$, so they are related by the equations (see also (\ref{Jacobi}))
\[
\begin{cases}
    \xi'&=\eta\\
    \eta'&=-R(\xi,\gamma')\gamma'
\end{cases}
\]
Hence, we can write
\[
\Uptheta_A^{u,s}(\xi, \eta)=\frac{g(\xi_A\pm\eta_A,\xi_A\pm\eta_A)}{g(\xi,\xi)+g(\eta,\eta)}
\]
We calculate only $\dfrac{d}{dt}\Uptheta_A^u(\xi,\eta)$, because for $\Uptheta_A^s(\xi,\eta)$ it is analogous. The next Lemma is also proved in section 4.1.1 of \cite{CARNEIRO&PUJALS14}. We present the proof here because the computations will be used in the next sections.

\begin{lemma}
    With the notation above, we have that
    \[
    \dfrac{d}{dt}\Uptheta_A^u(\xi,\eta)>0
    \]
\end{lemma}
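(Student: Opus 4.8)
The plan is to write $\Uptheta_A^u(\xi,\eta) = N/D$ with numerator $N := g(\xi_A + \eta_A,\, \xi_A + \eta_A)$ and denominator $D := g(\xi,\xi) + g(\eta,\eta)$, and to prove $\frac{d}{dt}\Uptheta_A^u = (N'D - ND')/D^2 > 0$ by showing $N'D - ND' > 0$. Three ingredients drive the computation: the Jacobi system $\xi' = \eta$ and $\eta' = -R(\xi,\gamma')\gamma'$; the locally symmetric identity \eqref{curvature locally symmetric}, which here reads $-R(\xi,\gamma')\gamma' = \xi_A + \frac{1}{4}\xi_B$; and the parallelism of the subbundles $A$ and $B$, so that $Pr_A$ and $Pr_B$ commute with $\frac{D}{dt}$ (recorded in the Preliminaries as $(X_C)' = X_C'$).

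First I would differentiate the numerator. Parallelism gives $(\xi_A)' = Pr_A(\xi') = \eta_A$ and $(\eta_A)' = Pr_A(\eta') = Pr_A(\xi_A + \tfrac14\xi_B) = \xi_A$, since $\xi_B \in B \perp A$. Hence $(\xi_A + \eta_A)' = \xi_A + \eta_A$, and therefore $N' = 2\,g(\xi_A+\eta_A,\, \xi_A+\eta_A) = 2N$. This is the crucial simplification: it collapses $N'D - ND'$ into $N(2D - D')$. Since on the boundary of the cone the vector $(\xi,\eta)$ is not contained in $E^{uu} = P_A^u$, we have $N > 0$ there, so the entire problem reduces to verifying $2D - D' > 0$.

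Next I would compute $D'$. Differentiating, $\frac{d}{dt}g(\xi,\xi) = 2g(\eta,\xi)$ and $\frac{d}{dt}g(\eta,\eta) = 2g(\xi_A + \tfrac14\xi_B,\, \eta)$; using the orthogonality $A \perp B$ to split every inner product into its $A$- and $B$-parts yields $D' = 4\,g(\xi_A,\eta_A) + \frac{5}{2}\,g(\xi_B,\eta_B)$. Expanding $2D$ in the same orthogonal decomposition, the difference $2D - D'$ separates into an $A$-block and a $B$-block: the $A$-block is $2\,g(\xi_A - \eta_A,\, \xi_A - \eta_A) = 2|\xi_A - \eta_A|^2 \ge 0$ (completing the square), while the $B$-block is $2|\xi_B|^2 + 2|\eta_B|^2 - \frac{5}{2}g(\xi_B,\eta_B) \ge 4|\xi_B||\eta_B| - \frac{5}{2}|\xi_B||\eta_B| = \frac{3}{2}|\xi_B||\eta_B| \ge 0$ by AM--GM and Cauchy--Schwarz. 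Both blocks are nonnegative and vanish simultaneously only when $\xi_A = \eta_A$ and $\xi_B = \eta_B = 0$, i.e.\ exactly when $(\xi,\eta) = (\xi_A,\xi_A) \in P_A^u = E^{uu}$; for boundary cone vectors this is excluded, so $2D - D' > 0$ strictly and $\frac{d}{dt}\Uptheta_A^u > 0$.

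The computation is short once the right structure is exposed, so there is no single deep obstacle; the delicate points are bookkeeping. The essential step is recognizing that parallelism of $A$ forces $N' = 2N$ — this is precisely where the locally symmetric hypothesis is used and what makes the argument clean. The only place requiring genuine care is the cross term in the $B$-block: one must check that the coefficient $\frac{5}{2}$ arising from the eigenvalue $\frac14$ of the curvature operator on $B$ leaves slack in the AM--GM estimate (it does, with margin $\frac{3}{2}|\xi_B||\eta_B|$), and that the degenerate locus $2D - D' = 0$ lands exactly on the cone's core $E^{uu}$, which lies outside the cone boundary. The analogous statement for $\Uptheta_A^s$ follows by the identical computation with the sign of $\eta_A$ reversed.
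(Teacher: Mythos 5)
Your proof is correct and is essentially the paper's own argument in a slightly different packaging: your identity $N'=2N$ and the resulting factorization $\frac{d}{dt}\Uptheta_A^u=N(2D-D')/D^2$ coincide exactly with the paper's expression (your $2D-D'$ is twice the paper's parenthesized factor, and your prefactor $N/D^2$ is half its $M$), with your AM--GM bound on the $B$-block replacing the paper's completion of the square $g\left(\xi_B-\frac{5}{8}\eta_B,\xi_B-\frac{5}{8}\eta_B\right)+\frac{39}{64}g(\eta_B,\eta_B)$. One small repair: on the cone boundary, $N>0$ follows from $\Uptheta_A^u(\xi,\eta)=c>0$ there, not from $(\xi,\eta)\notin E^{uu}$ (a vector such as $(\xi_B,0)$ lies outside $E^{uu}$ yet has $N=0$); your separate use of $(\xi,\eta)\notin E^{uu}$ to get strictness of $2D-D'>0$ is, however, exactly right, and in fact treats the degenerate locus more carefully than the paper, which simply asserts $M>0$ and that the final sum of squares is positive.
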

\begin{proof}
The straightforward computation of the derivative gives us the following equality
\begin{align*}
    \dfrac{d}{dt}\Uptheta_A^u(\xi,\eta)&=\frac{2g((\xi_A)'+(\eta_A)',\xi_A+\eta_A)(g(\xi,\xi)+g(\eta,\eta))}{(g(\xi,\xi)+g(\eta,\eta))^2}\\
    &-2\frac{(g(\xi',\xi)+g(\eta',\eta))g(\xi_A+\eta_A,\xi_A+\eta_A)}{(g(\xi,\xi)+g(\eta,\eta))^2}
\end{align*}
A priori $(\xi_A)'=\xi_{A'}+\xi_A'$, but $\xi_{A'}=0$ since $A$ is parallel. Then we write
\begin{align*}
    \dfrac{d}{dt}\Uptheta_A^u(\xi,\eta)&=2\frac{g(\xi_A'+\eta_A',\xi_A+\eta_A)}{g(\xi,\xi)+g(\eta,\eta)}\\
                                    &-2\frac{(g(\xi',\xi)+g(\eta',\eta))g(\xi_A+\eta_A,\xi_A+\eta_A)}{(g(\xi,\xi)+g(\eta,\eta))^2}
\end{align*}
Because $\xi$ and $\eta$ satisfies the following equations (remember that $\xi$ is a Jacobi field)

\[\begin{cases}
    \xi'&=\eta \\
    \xi_A'&=Pr_A(\xi')=Pr_A(\eta)=\eta_A\\
    \eta'&=-R(\xi,\gamma')\gamma'\\
    \eta_A'&=Pr_A(\eta')=Pr_A(-R(\xi,\gamma')\gamma')=-(R(\xi,\gamma')\gamma')_A
\end{cases}
\]
then
\begin{align}
\label{angle variation initial metric}
    \dfrac{d}{dt}\Uptheta_A^u(\xi,\eta)&=2\frac{g(\eta_A-(R(\xi,\gamma')\gamma')_A,\xi_A+\eta_A)}{g(\xi,\xi)+g(\eta,\eta)}\nonumber\\
                                    &-2\frac{g(\xi_A+\eta_A,\xi_A+\eta_A)}{(g(\xi,\xi)+g(\eta,\eta))^2}(g(\eta,\xi)+g(-R(\xi,\gamma')\gamma',\eta))
\end{align}
Because of the properties $R(X,v)v=-\frac{1}{4}X_B-X_A$ and $A$ orthogonal to $B$, we have that
\[
(R(\xi,\gamma')\gamma')_A=-\xi_A
\]
and
\[
g(-R(\xi,\gamma')\gamma',\eta)=\frac{1}{4}g(\eta_B,\xi_B)+g(\eta_A,\xi_A)
\]
So we get
\begin{align}
\label{final angle variation for initial metric}
    \dfrac{d}{dt}\Uptheta_A^u(\xi,\eta)&=2\frac{g(\eta_A+\xi_A,\xi_A+\eta_A)}{g(\xi,\xi)+g(\eta,\eta)}\nonumber\\
                                    &-2\frac{g(\xi_A+\eta_A,\xi_A+\eta_A)}{(g(\xi,\xi)+g(\eta,\eta))^2}\left(g(\eta,\xi)+\frac{1}{4}g(\eta_B,\xi_B)+g(\eta_A,\xi_A)\right)\nonumber\\
                                    &=2\frac{g(\eta_A+\xi_A,\xi_A+\eta_A)}{(g(\xi,\xi)+g(\eta,\eta))^2}\left(g(\xi,\xi)+g(\eta,\eta)-g(\eta,\xi)-\frac{1}{4}g(\eta_B,\xi_B)-g(\eta_A,\xi_A)\right)                      
\end{align}
Define $M:=2\frac{g(\eta_A+\xi_A,\xi_A+\eta_A)}{(g(\xi,\xi)+g(\eta,\eta))^2}>0$. Then,
\begin{align*}
    \dfrac{d}{dt}\Uptheta_A^u(\xi,\eta)&=M\left(g(\xi,\xi)+g(\eta,\eta)-g(\eta,\xi)-\frac{1}{4}g(\eta_B,\xi_B)-g(\eta_A,\xi_A)\right)\\
                                &=M\Bigg(g(\xi_A,\xi_A)+g(\xi_B,\xi_B)+g(\eta_A,\eta_A)+g(\eta_B,\eta_B)\Bigg.\\
                                &\Bigg.-g(\xi_A,\eta_A)-g(\xi_B,\eta_B)-\frac{1}{4}g(\xi_B,\eta_B)-g(\xi_A,\eta_A)\Bigg)\\
                                &=M\Bigg(g(\xi_A,\xi_A)-2g(\xi_A,\eta_A)+g(\eta_A,\eta_A)+g(\xi_B,\xi_B)-\frac{5}{4}g(\xi_B,\eta_B)+g(\eta_B,\eta_B)\Bigg)\\
                                &=M\left(g(\xi_A-\eta_A,\xi_A-\eta_A)+g\left(\xi_B-\frac{5}{8}\eta_B,\xi_B-\frac{5}{8}\eta_B\right)+\frac{39}{64}g(\eta_B,\eta_B)\right)\\
\end{align*}
The last equality is trivially positive. 
\end{proof}

\subsection{Cone invariance for the deformed metric}
To prove that the geodesic flow $\til{g}_t$ is partially hyperbolic we follow the same idea as in \cite{CARNEIRO&PUJALS14}. We prove that there is a family of invariant cones around the candidates to strong stable and unstable bundles. The strategy is to prove the invariance of the cones for parallel and almost parallel with controlled $s$-component geodesics by computing the angle variation. For transversal geodesics, we prove that by shrinking the deformed region we can guarantee that these geodesics spend just a small time in the deformed region in comparison to the time it spends outside, thus we can conclude the cone invariance for those geodesics too. Here we just need to do the computations for parallel and almost parallel geodesics with controlled components in the $s$-direction once their arguments for the transversal geodesics do not depend on the type of deformation made, so it automatically works in our case.
As before, we are interested in the angle variation between the orbit of the derivative of the geodesic flow and a subbundle of the contact structure for the new metric. Again it is given by the angle of the correspondent Jacobi Field and the subbundle. For this subsection, we will use the same notation for the Jacobi Fields of the new metric, but we need to keep in mind that it is not a Jacobi Field for the initial metric.

Let us again fix some notation. Remember that the angle we are interested in is given by $\tilde{\Uptheta}_{A^{u,s}}((d\tilde{g}_t)_{(x,v)}(\xi,\eta))$. Write $\alpha(t)=\pi\circ \tilde{g}_t(x,v)$ and $(d\tilde{g}_t)_{(x,v)}(\xi,\eta)=(\xi(t),\eta(t))$, where $\xi(t)$ and $\eta(t)$ satisfy
\[
\begin{cases}
    \xi'(t)=\eta(t)\\
    \eta'(t)=-\tilde{R}(\xi(t),\alpha'(t))\alpha'(t)
\end{cases}
\]
Once again we write
\[
\tilde{\Uptheta}_A^{u,s}(\xi, \eta)=\frac{\tilde{g}(\xi_A\pm\eta_A,\xi_A\pm\eta_A)}{\tilde{g}(\xi,\xi)+\tilde{g}(\eta,\eta)}
\]
\begin{remark}
    We need to be careful with the differences in the metric phenomena. For instance, we will deal with a geodesic $\alpha$ for the deformed metric. Then, from equation \eqref{conformal connection} we get
    \[
    0=\dfrac{\tilde{D}}{dt}\alpha'=\dfrac{D}{dt}\alpha'+\alpha'(h)\alpha'-\frac{1}{2}g(\alpha',\alpha')\nabla h.
    \]
    So, if the geodesic $\alpha$ lies outside the deformed region, then it is a geodesic for the initial metric, but inside this region, it may not be a geodesic for the initial metric. The same remark can be made about Jacobi Fields. For simplicity and to differentiate the operators $\dfrac{\tilde{D}}{dt}$ and $\dfrac{D}{dt}$, we will use the notation $\tilde{D}_t=\dfrac{\tilde{D}}{dt}$ and $D_t=\dfrac{D}{dt}$. With this notation
    \[
    \begin{cases}
        \tilde{D}_t\xi&=\eta\\
        \tilde{D}_t\eta&=-\tilde{R}(\xi,\alpha')\alpha'
    \end{cases}
    \]
\end{remark}
 Of course, if the geodesic $\alpha$ does not cross the deformed region, then the angle variation is the same as before.

\subsubsection{Cone invariance for parallel geodesics}
First, we will evaluate the variation of the angle for geodesics that point in the same direction as the central geodesic, i.e. $\alpha'=(\alpha^0,0,\dots,0)$. In general, we can write
\begin{align*}
    \dfrac{d}{dt}\til\Uptheta_A^u(\xi,\eta)&=2\frac{\til g((\xi_A)'+(\eta_A)',\xi_A+\eta_A)}{\til g(\xi,\xi)+\til g(\eta,\eta)}\\
                                    &-2\frac{(\til g(\xi',\xi)+\til g(\eta',\eta))\til g(\xi_A+\eta_A,\xi_A+\eta_A)}{(\til g(\xi,\xi)+\til g(\eta,\eta))^2}\\
                                &=2\frac{\til g(\xi_{A'}+\xi_A'+\eta_{A'}+\eta_A',\xi_A+\eta_A)}{\til g(\xi,\xi)+\til g(\eta,\eta)}\\
                                    &-2\frac{(\til g(\xi',\xi)+\til g(\eta',\eta))\til g(\xi_A+\eta_A,\xi_A+\eta_A)}{(\til g(\xi,\xi)+\til g(\eta,\eta))^2}\\
                                &=2\frac{\til g(\xi_A'+\eta_A',\xi_A+\eta_A)}{\til g(\xi,\xi)+\til g(\eta,\eta)}+2\frac{\til g(\xi_{A'}+\eta_{A'},\xi_A+\eta_A)}{\til g(\xi,\xi)+\til g(\eta,\eta)}\\
                                    &-2\frac{(\til g(\xi',\xi)+\til g(\eta',\eta))\til g(\xi_A+\eta_A,\xi_A+\eta_A)}{(\til g(\xi,\xi)+\til g(\eta,\eta))^2}
\end{align*}
Call $N:=\til g(\xi,\xi)+\til g(\eta,\eta)$ to make our notation simpler.
\begin{align*}
    \dfrac{d}{dt}\til\Uptheta_A^u(\xi,\eta)&=2\frac{\til g(\xi_A'+\eta_A',\xi_A+\eta_A)}{N}+2\frac{\til g(\xi_{A'}+\eta_{A'},\xi_A+\eta_A)}{N}\\
                                    &-2\frac{(\til g(\xi',\xi)+\til g(\eta',\eta))\til g(\xi_A+\eta_A,\xi_A+\eta_A)}{N^2}\\
                                    &=2\frac{\til g(\eta_A- (\til R(\xi,\alpha')\alpha')_A,\xi_A+\eta_A)}{N}+2\frac{\til g(\xi_{A'}+\eta_{A'},\xi_A+\eta_A)}{N}\\
                                    &-2\frac{(\til g(\eta,\xi)+\til g(-\til R(\xi,\alpha')\alpha',\eta))\til g(\xi_A+\eta_A,\xi_A+\eta_A)}{N^2}\\
\end{align*}
The strategy to show that this quantity is positive is to approximate it to \eqref{angle variation initial metric} plus terms that can be made small or do not change its sign. Since $h$ does not depend on $x_0$, we have that $\alpha'(h)=0$. We first deal with the term $\til{g}(\eta_A-(\til{R}(\xi,\alpha')\alpha')_A,\xi_A+\eta_A)$: from the equation \eqref{conformal curvature} and by the definition of the space $A$ we get  the following:

\begin{align*}
-(\til{R}(\xi,\alpha')\alpha')_A&=-(R(\xi,\alpha')\alpha')_A+\frac{1}{2}Hess(h)(\alpha',\alpha')\xi_A\\
&+\frac{1}{2}\norm{\alpha'}^2(\nabla_{\xi}\nabla h)_A+\frac{1}{4}\norm{\alpha'}^2\norm{\nabla h}^2\xi_A-(\xi h)\norm{\alpha'}^2(\nabla h)_A
\end{align*}
Notice that by the estimates in Lemma \ref{estimates on h}
\begin{itemize}
    \item There exists $C_1>0$ such that $|Hess(h)(\alpha',\alpha')|\leq C_1\ve^2$.
    \item There exists $C_2>0$ such that $\frac{1}{4}\norm{\alpha'}\norm{\nabla h}^2\leq C_2\ve^2$.
    \item There exists $C_3>0$ such that $|(\xi h)\norm{\alpha'}^2\norm{(\nabla h)_A}\leq C_3\norm{\xi}\ve^2$.
\end{itemize}
It implies that there exists $C>0$ such that we can write
\begin{equation}
\label{aproximation curvature projected}
-(\til{R}(\xi,\alpha')\alpha')_A=-(R(\xi,\alpha')\alpha')_A+\frac{1}{2}\norm{\alpha'}^2(\nabla_{\xi}\nabla h)_A+v_A+w_A,
\end{equation}
with $v_A,w_A\in A$ such that $\norm{v_A}\leq C\ve^2$ and $\norm{w_A}\leq C\norm{\xi}\ve^2$. We claim that we also can control the norm of $(\nabla_\xi\nabla h)_A$. First, remember that since $A$ is parallel for the initial metric, we can write

\[
(\nabla_\xi\nabla h)_A=\nabla_\xi(\nabla h)_A.
\]
Now,
\[
\nabla_\xi(\nabla h)_A=\nabla_{\xi}\left(\partial_ih\left(\frac{\partial}{\partial x^i}\right)_A\right)=\xi(\partial_ih)\left(\frac{\partial}{\partial x^i}\right)_A+\partial_ih\nabla_\xi\left(\frac{\partial}{\partial x^i}\right)_A.
\]
Because the Christoffel symbols vanish at $x=0$, there exists a constant $D_1>0$ such that the second term has a norm less or equal then $D_1\norm{\xi}\ve^2$. For the first term also by Lemma \ref{estimates on h} we get that there exists $D_2>0$ such that for all $i\neq s$
\[
\norm{\xi(\partial_ih)\left(\frac{\partial}{\partial x^i}\right)_A}\leq D_2\norm{\xi}\ve.
\]
For $i=s$
\begin{equation}
\label{norm covariant derivative projected}
\norm{\xi(\partial_sh)\left(\frac{\partial}{\partial x^s}\right)_A}=\norm{\sum_{j }\xi_j\partial_{js}^2h\left(\frac{\partial}{\partial x^s}\right)_A}\leq \norm{\sum_{j\neq s }\xi_j\partial_{js}^2h\left(\frac{\partial}{\partial x^s}\right)_A}+\norm{\xi_s\partial_{ss}^2h\left(\frac{\partial}{\partial x^s}\right)_A}.
\end{equation}
Then, there exists $D_3>0$ such that
\[
\norm{\sum_{j\neq s }\xi_j\partial_{js}^2h\left(\frac{\partial}{\partial x^s}\right)_A}\leq D_3\norm{\xi}\ve.
\]
For the last term of \eqref{norm covariant derivative projected} notice that $\left(\frac{\partial}{\partial x^s}\right)_{A(\gamma,\gamma')}=0$, then, since $\alpha$ is parallel, by continuity of the subbundle $A$ there exist $D_4>0$ such that
\[
\norm{\xi_s\partial_{ss}^2h\left(\frac{\partial}{\partial x^s}\right)_A}\leq D_4|\xi_s|\ve.
\]
We conclude that there exists $D>0$ such that

\[
\norm{(\nabla_\xi\nabla h)_A}\leq D\norm{\xi}\ve.
\]
Back to equality \eqref{aproximation curvature projected} there exists a constant $\til{C}>0$ such that
\begin{equation}
\label{curvature projected plus small vectors}
    -(\til{R}(\xi,\alpha')\alpha')_A=-(R(\xi,\alpha')\alpha')_A+v_A+w_A,
\end{equation}
with $v_A,w_A\in A$ such that $\norm{v_A}\leq \til{C}\ve^2$ and $\norm{w_A}\leq \til{C}\norm{\xi}\ve$. Finally, using $(R(\xi,\alpha')\alpha')_A=-\xi_A$ we can write
\[
\til{g}(\eta_A-(\til{R}(\xi,\alpha')\alpha')_A,\xi_A+\eta_A) = \til{g}(\xi_A+\eta_A,\xi_A+\eta_A)+\til{g}(v_A,\xi_A+\eta_A)+\til{g}(w_A,\xi_A+\eta_A).
\]
It shows that $2\frac{\til g(\eta_A- (\til R(\xi,\alpha')\alpha')_A,\xi_A+\eta_A)}{N}$ can be made arbitrally close to $2\frac{\til g(\xi_A+\eta_A,\xi_A+\eta_A)}{N}$, which is similar to the first term that appears in \eqref{final angle variation for initial metric}. We now deal with $\til g(-\til R(\xi,\alpha')\alpha',\eta)$: from the relation \eqref{conformal curvature} and the fact that $\eta$ is orthogonal to $\alpha'$ we get

\begin{align}
\label{approximation curvature NOT projected}
    \til g(-\til R(\xi,\alpha')\alpha',\eta) &= \til g(-R(\xi,\alpha')\alpha',\eta)+\frac{1}{2}Hess(h)(\alpha',\alpha')\til{g}(\xi,\eta)\nonumber \\
    &+\frac{1}{2}\norm{\alpha'}^2Hess(h)(\xi,\eta)+\frac{1}{4}\norm{\alpha'}^2\norm{\nabla h}^2\til g(\xi,\eta)+\frac{1}{4}(\xi h)\norm{\alpha'}^2(\eta h)
\end{align}
Proceeding similarly as before and applying the estimates from Lemma \ref{estimates on h}, we have the following:
\begin{itemize}
    \item There exists $C_1>0$ such that $\frac{1}{2}|Hess(h)(\alpha',\alpha')|\leq C_1\ve^2$.
    \item There exists $C_2>0$ such that $\frac{1}{4}\norm{\alpha'}\norm{\nabla h}^2 \leq C_2\ve^2$.
    \item There exists $C_3>0$ such that $\frac{1}{4}\norm{\alpha'}^2|(\xi h)(\eta h)|\leq C_3 \norm{\xi}\norm{\eta}\ve^2$. 
\end{itemize}
We also can have some control on $Hess(h)(\xi,\eta)$: there exists $C_4$ such that
\[
|Hess(h)(\xi,\eta)-\xi_s\eta_s\partial_{ss}^2h|\leq \sum_{(i,j)\neq(s,s)}|\xi^i\eta^j||\partial_{ij}^2h-\Gamma_{ij}^k\partial_kh|\leq C_4\norm{\xi}\norm{\eta}\ve.
\]
Putting everything together it shows that there exists $C>0$ such that we can rewrite equation \eqref{approximation curvature NOT projected} as
\begin{equation}
\label{curvature plus small numbers}
\til g(-\til R(\xi,\alpha')\alpha',\eta) = \til g(-R(\xi,\alpha')\alpha',\eta) -\frac{1}{2}\norm{\alpha'}^2\xi_s\eta_s\partial_{ss}^2h+K_1+K_2,
\end{equation}
with $|K_1|\leq C\ve^2$ and $|K_2|\leq C\norm{\xi}\norm{\eta}\ve$. The last term we need to deal with is $\til g(\xi_{A'}+\eta_{A'},\xi_A+\eta_A)$, but record that from our notation $\xi_{A'}=(\til D_t Pr_A)\eta$ and $\til D_t$ is $\ve$-close to $D_t$ for which we had $D_tPr_A=0$ by the locally symmetric assumption. It implies that there exists $E>0$ such that $\til g(\xi_{A'}+\eta_{A'},\xi_A+\eta_A)\leq E\norm{\xi_A+\eta_A}\ve$. All the above estimates imply, by mimicking what we have done in equation \eqref{final angle variation for initial metric} and using \protect{\cite[Lemma~2.10]{CARNEIRO&PUJALS14}}, that there exist constants $K>0$ and $K'>0$ such that we can write the angle variation as

\begin{align}
\label{final angle variation for parallel geodesics}
    \dfrac{d}{dt}\til\Uptheta_A^u(\xi,\eta)=2\frac{\til g(\eta_A+\xi_A,\xi_A+\eta_A)}{(\til g(\xi,\xi)+\til g(\eta,\eta))^2}&\Big(\til g(\xi,\xi)+\til g(\eta,\eta)-\til g(\eta,\xi)-\frac{1}{4}\til g(\eta_B,\xi_B)-\til g(\eta_A,\xi_A)\nonumber \\
    &-\frac{1}{2}\norm{\alpha'}^2\xi_s\eta_s\partial_{ss}^2h\Big)+U_1+U_2+U_3+U_4,
\end{align}
with
\begin{itemize}
    \item $|U_1|\leq K\frac{\norm{\xi_A+\eta_A}}{N}\ve^2\leq K'\ve^2$.
    \item $|U_2|\leq K\frac{\norm{\xi_A+\eta_A}\norm{\xi}}{N}\ve\leq K'\ve$.
    \item $|U_3|\leq K\frac{\norm{\xi_A+\eta_A}^2}{N^2}\ve^2\leq K'\ve^2$.
    \item $|U_4|\leq K\frac{\norm{\xi_A+\eta_A}\norm{\xi}\norm{\eta}}{N^2}\ve\leq K'\ve$.
\end{itemize}
Thus, it is sufficient to prove that the quantity inside the parentheses is positive and then choose $\ve>0$ sufficiently small such that the expression \eqref{final angle variation for parallel geodesics} is positive. To see that, notice that since $\alpha'$ is a unitary geodesic for the metric $\til g$, then $\norm{\alpha'}^2=g(\alpha',\alpha')=\mrm{e}^{-h}$ does not exceeds $\mrm{e}^{2\ve^4}$. It follows by the construction of the function $h$ that 
\[
-\frac{1}{2}\norm{\alpha'}^2\partial_{ss}^2h\leq \frac{\mrm{e}^{2\ve^4}}{4}\leq \frac{1}{2}.
\]
If $\xi_s\eta_s\geq0$, then by the same computations for the initial metric we can conclude that the expression \eqref{final angle variation for parallel geodesics} is positive. If we have that $\xi_s\eta_s<0$ then we must have 
\[
\frac{1}{2}\xi_s\eta_s\leq -\frac{1}{2}\norm{\alpha'}^2\partial_{ss}^2h\xi_s\eta_s.
\]
So, it is sufficient to show that the following function is positive 
\[
f(x):=\til g(\xi,\xi)+\til g(\eta,\eta)-\til g(\eta,\xi)-\frac{1}{4}\til g(\eta_B,\xi_B)-\til g(\eta_A,\xi_A)\nonumber -\frac{1}{2}\xi_s\eta_s.
\]
Notice that $f$ does not depend on $\ve$, so we prove that $f(0)>0$, and by shrinking the deformed neighborhood if necessary we guarantee that $f(x)>0$.

\begin{align*}
    f(0)&= g(\xi,\xi)+ g(\eta,\eta)- g(\eta,\xi)-\frac{1}{4} g(\eta_B,\xi_B)- g(\eta_A,\xi_A)\nonumber -\frac{1}{2}\xi_s\eta_s\\
        &=\sum_{i=1}^{n-1}(\xi_i^2+\eta_i^2)-\sum_{i=1}^{n-1}\xi_i\eta_i-\frac{1}{4}\sum_{i=r+1}^{n-1}\xi_i\eta_i-\sum_{i=1}^r\xi_i\eta_i-\frac{1}{2}\xi_s\eta_s\\
        &=\sum_{i=1}^r(\xi_i^2-2\xi_i\eta_i+\eta_i^2)+\sum_{\substack{i=r+1\\i\neq s}}^{n-1}\left(\xi_i^2-\frac{5}{4}\xi_i\eta_i+\eta_i^2\right)+\left(\xi_s^2-\frac{7}{4}\xi_s\eta_s+\eta_s^2\right)\\
        &=\sum_{i=1}^r(\xi_i-\eta_i)^2+\sum_{\substack{i=r+1\\i\neq s}}^{n-1}\left(\xi_i-\frac{5}{8}\eta_i\right)^2+\frac{39}{64}\sum_{\substack{i=r+1\\i\neq s}}^{n-1}\eta_i^2+\left(\xi_s-\frac{7}{8}\eta_s\right)^2+\frac{15}{64}\eta_s^2\\
        &>0
\end{align*}
We have proven the following proposition:
\begin{proposition}
\label{conclusion of invariance for parallel geodesics}
    There exists $\ve>0$ such that the family of cones $C(v,P_A^{u,s}(x,v),c)$ is invariant along parallel geodesics that crosses $U(\ve)$. 
\end{proposition}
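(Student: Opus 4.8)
The plan is to invoke the cone criterion \cite[Lemma~2.10]{CARNEIRO&PUJALS14}: to prove that the family $C(v,P_A^{u,s}(x,v),c)$ is invariant it suffices to show that the angle function strictly increases along orbits of $\til g_t$, that is, $\frac{d}{dt}\til\Uptheta_A^{u,s}((d\til g_t)_{(x,v)}(\xi,\eta))>0$ for $(\xi,\eta)$ on the boundary of the cone. I would carry out the computation only for the unstable cone $P_A^u$; the stable case $P_A^s$ is entirely analogous after the sign change $\eta\mapsto-\eta$, so it follows by the same argument. Since outside $U(\ve)$ the metric $\til g$ agrees with $g$ and the variation is already positive by the previous lemma, the content of the statement is to control what happens while a parallel geodesic traverses the deformed region.

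First I would differentiate $\til\Uptheta_A^u(\xi,\eta)=\frac{\til g(\xi_A+\eta_A,\xi_A+\eta_A)}{\til g(\xi,\xi)+\til g(\eta,\eta)}$ using the deformed Jacobi relations $\til D_t\xi=\eta$ and $\til D_t\eta=-\til R(\xi,\alpha')\alpha'$. Restricting to geodesics parallel to the central one, $\alpha'=(\alpha^0,0,\dots,0)$, and using that $h$ is independent of $x_0$, the directional term $\alpha'(h)$ vanishes, so $\alpha$ remains a reparametrized geodesic behaviour compatible with the computation. The objective is to rewrite this derivative as the positive sum of squares obtained for the initial metric in \eqref{final angle variation for initial metric}, plus correction terms generated by the conformal factor $\phi=\mrm{e}^{h}$.

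The heart of the proof is estimating these corrections through the conformal curvature formula \eqref{conformal curvature} and the bounds of Lemma \ref{estimates on h}. Expanding $-(\til R(\xi,\alpha')\alpha')_A$ and $\til g(-\til R(\xi,\alpha')\alpha',\eta)$, every contribution involving $Hess(h)$, $\norm{\nabla h}^2$, $(\xi h)(\eta h)$, the projection $(\nabla_\xi\nabla h)_A$, or the discrepancy between $\til D_t$ and $D_t$ (for which $D_t\mrm{Pr}_A=0$ by local symmetry) turns out to be $O(\ve)$ or $O(\ve^2)$ and can be absorbed into error terms $U_1,\dots,U_4$. The single term that is \emph{not} small is $-\frac{1}{2}\norm{\alpha'}^2\xi_s\eta_s\,\partial_{ss}^2h$, coming precisely from the flattened $s$-direction where curvature was raised to zero. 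Isolating this one dangerous term while verifying that all others are negligible is the main technical obstacle, as it requires the careful bookkeeping of the projected Hessian done through \eqref{norm covariant derivative projected} and the continuity of the subbundle $A$.

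Having reduced the derivative to the positive expression from the initial metric plus this term and the negligible errors, I would split on the sign of $\xi_s\eta_s$. If $\xi_s\eta_s\ge 0$ the extra term has the favourable sign and positivity follows as in the undeformed case. If $\xi_s\eta_s<0$, I use $\norm{\alpha'}^2=\mrm{e}^{-h}\le\mrm{e}^{2\ve^4}$ together with $\partial_{ss}^2h(0)=-\frac{1}{2}$ to bound $-\frac{1}{2}\norm{\alpha'}^2\partial_{ss}^2h\le\frac{1}{2}$, which reduces matters to the positivity of the fixed function $f(x)=\til g(\xi,\xi)+\til g(\eta,\eta)-\til g(\eta,\xi)-\frac{1}{4}\til g(\eta_B,\xi_B)-\til g(\eta_A,\xi_A)-\frac{1}{2}\xi_s\eta_s$. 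Since $f$ does not depend on $\ve$, completing squares at $x=0$ gives $f(0)>0$, so by continuity $f>0$ on a sufficiently small neighbourhood. Finally, because $U_1,\dots,U_4$ are $O(\ve)$, choosing $\ve$ small makes the whole angle variation strictly positive, which proves the proposition.
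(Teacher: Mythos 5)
Your proposal is correct and follows essentially the same route as the paper's proof: the cone criterion of \cite[Lemma~2.10]{CARNEIRO&PUJALS14}, the expansion of $\frac{d}{dt}\til\Uptheta_A^u$ via the conformal curvature formula \eqref{conformal curvature} with the error terms controlled by Lemma \ref{estimates on h}, the isolation of the single non-negligible term $-\frac{1}{2}\norm{\alpha'}^2\xi_s\eta_s\partial_{ss}^2h$, and the reduction (splitting on the sign of $\xi_s\eta_s$ and using $\norm{\alpha'}^2=\mrm{e}^{-h}$) to the positivity of the $\ve$-independent function $f$, established by completing squares at $x=0$. No meaningful divergence from the paper's argument.
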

\subsubsection{Cone invariance for almost parallel geodesics}
Suppose that $\alpha'=\alpha^i\dfrac{\partial}{\partial x^i}$ is such that $|\alpha^s|\leq \theta$. We will prove that the cone family is invariant along such geodesics if $\theta$ is small enough. The proof follows the same lines as the proof for parallel geodesics, the difference here is that for some estimates we can not assume that $\alpha'$ has no components in other directions, but it is possible to obtain essentially the same estimates for the first derivatives of $h$ in the $\alpha'$ direction and $\theta$ small will control the Hessian of $h$ applied to $\alpha'$. Indeed, for a almost parallel geodesic, the same analysis holds but for terms that are controlled by $\theta$: by equation \eqref{conformal curvature} we can see that
\begin{align*}
-(\til R(\xi,\alpha')\alpha')_A=-(R(\xi,\alpha')\alpha')_A+v_A+w_A-\frac{1}{4}(\alpha'(h))^2\xi_A+\frac{1}{4}(\xi h)(\alpha'(h))(\nabla h)_A\\
-\frac{1}{2}Hess(h)(\alpha',\alpha')(h)\xi_A,
\end{align*}
With $v_A$ and $w_A$ as before. However, there exists $C_1>0$ and $C_2>0$ such that
\[
\norm{-\frac{1}{4}(\alpha'(h))^2\xi_A-\frac{1}{4}(\xi h)(\alpha'(h))(\nabla h)_A}\leq C_1\norm{\xi}\ve^2
\]
and
\[
|Hess(h)(\alpha',\alpha')|\leq C_1\norm{\alpha'}^2\ve+C_2\theta^2
\]
So, without losing generality, there exists a constant $C>0$ such that we still can write the relation \eqref{curvature projected plus small vectors} for almost parallel geodesics plus a term that is controlled by $\theta$, i.e.
\begin{equation}
\label{curvature projected plus small vectors controlled by theta}
    -(\til{R}(\xi,\alpha')\alpha')_A=-(R(\xi,\alpha')\alpha')_A+v_A+w_A+u_A,
\end{equation}
where $\norm{u_A}\leq C\norm{\xi}\theta^2$. Analogously, there exists a constant $D>0$ such that we can rewrite the equation \eqref{curvature plus small numbers} for almost parallel geodesics as follows:
\begin{align*}
\til g(-\til R(\xi,\alpha')\alpha',\eta)& = \til g(-R(\xi,\alpha')\alpha',\eta) -\frac{1}{2}\norm{\alpha'}^2\xi_s\eta_s\partial_{ss}^2h+K_1+K_2+K_3\\
&-\frac{1}{4}(\alpha'(h))^2\til{g}(\xi,\eta)+\frac{1}{4}(\xi h)(\alpha'(h))(\eta h).
\end{align*}
Again, we have that the extra terms satisfy the same properties as $K_1$ and $K_2$ and $|K_3|\leq D\norm{\xi}\norm{\eta}\theta^2$. Then we can write, without losing generality, the equation \eqref{final angle variation for parallel geodesics} for almost parallel geodesics plus a term controlled by $\theta^2$. Since we already proved that this expression is positive, we conclude the following proposition:

\begin{proposition}
\label{conclusion cone invariance for s controlled geodesics}
    There exist numbers $\ve>0$ and $\theta>0$ such that the family of cones $C(v,P_A^{u,s}(x,v),c)$ is invariant along any geodesic $\alpha$ with $|\alpha^s|<\theta$ that crosses $U(\ve)$.
\end{proposition}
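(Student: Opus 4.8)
The plan is to run the almost-parallel case as a perturbation of the parallel case treated in Proposition \ref{conclusion of invariance for parallel geodesics}, quantifying every new contribution in terms of the transversality parameter $\theta$ bounding $|\alpha^s|$. Concretely, I would again begin from the exact expression for $\dfrac{d}{dt}\til\Uptheta_A^u(\xi,\eta)$ and expand the two curvature quantities $-(\til R(\xi,\alpha')\alpha')_A$ and $\til g(-\til R(\xi,\alpha')\alpha',\eta)$ via the conformal formula \eqref{conformal curvature}. The only structural difference from the parallel situation is that $\alpha'(h)$ no longer vanishes and $Hess(h)(\alpha',\alpha')$ acquires a genuine contribution from the $s$-direction, so I must show these extra pieces are either $O(\ve)$ or $O(\theta^2)$.

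First I would reassemble, exactly as before, all the terms that are $O(\ve)$ or $O(\ve^2)$ using Lemma \ref{estimates on h}; these are harmless and are absorbed into the error terms $U_1,\dots,U_4$ after invoking \cite[Lemma~2.10]{CARNEIRO&PUJALS14}. The genuinely new terms are those built from $\alpha'(h)$ and from the part of $Hess(h)(\alpha',\alpha')$ carrying $\partial_{ss}^2 h$. The key observation is that, although $|\partial_{ss}^2 h|\le \tfrac12$ is \emph{not} small, it always appears multiplied by $(\alpha^s)^2$, while every cross term $\alpha^s\alpha^j\partial_{sj}^2 h$ is $O(\theta\ve)$ and the remaining second derivatives $\partial_{ij}^2 h$ with $i,j\ne s$ are $O(\ve^2)$ by Lemma \ref{estimates on h}; hence $|Hess(h)(\alpha',\alpha')|\le C_1\norm{\alpha'}^2\ve+C_2\theta^2$, and similarly $\alpha'(h)=\alpha^s\partial_s h+\sum_{i\ne s}\alpha^i\partial_i h$ is $O(\ve^2)$. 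This produces the modified identity \eqref{curvature projected plus small vectors controlled by theta} together with its companion for $\til g(-\til R(\xi,\alpha')\alpha',\eta)$, i.e. precisely the parallel-case expressions augmented by additional terms bounded by $C\norm{\xi}\theta^2$ and $D\norm{\xi}\norm{\eta}\theta^2$.

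Substituting these into the derivative, I would recover the bracketed expression of \eqref{final angle variation for parallel geodesics} plus the four $\ve$-controlled errors $U_1,\dots,U_4$ plus one new error controlled by $\theta^2$. The dominant bracket is unchanged, and we already established, through the auxiliary function $f$ with $f(0)>0$, that it is bounded below by a strictly positive constant independent of both $\ve$ and $\theta$. Therefore I would first fix $\ve$ small enough to make $|U_1|+\dots+|U_4|$ smaller than, say, one third of that lower bound, and then shrink $\theta$ so that the residual $\theta^2$-term is likewise dominated. Positivity of $\dfrac{d}{dt}\til\Uptheta_A^u(\xi,\eta)$ on the cone boundary $\partial C(v,P_A^{u,s}(x,v),c)$ then follows, and \cite[Lemma~2.10]{CARNEIRO&PUJALS14} yields invariance of the cone family; the analogous computation for $\til\Uptheta_A^s$ is identical after reversing time.

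The main obstacle is exactly the $\partial_{ss}^2 h$ contribution, the single second derivative of $h$ that remains of order one under the deformation. Unlike every other error term it cannot be absorbed by shrinking $\ve$: it enters $Hess(h)(\alpha',\alpha')$ only through the factor $(\alpha^s)^2$, so its size is governed purely by the transversality bound $|\alpha^s|<\theta$. This is precisely why a second smallness parameter $\theta$, chosen independently of $\ve$, is indispensable. Once one isolates this term and recognizes that it is quadratic in $\alpha^s$, the remainder of the argument is a bookkeeping of lower-order contributions identical to the parallel case.
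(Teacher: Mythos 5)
Your proposal is correct and follows essentially the same route as the paper: expand both curvature terms via the conformal formula \eqref{conformal curvature}, observe that the only order-one second derivative $\partial_{ss}^2h$ enters $\mathrm{Hess}(h)(\alpha',\alpha')$ only through the factor $(\alpha^s)^2\leq\theta^2$ (giving exactly the paper's bound $|\mathrm{Hess}(h)(\alpha',\alpha')|\leq C_1\norm{\alpha'}^2\ve+C_2\theta^2$, with $\alpha'(h)=O(\ve^2)$), and thereby recover the parallel-case identity \eqref{final angle variation for parallel geodesics} up to errors of size $O(\ve)+O(\theta^2)$, which are absorbed by the strictly positive bracket established via the function $f$. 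The paper's proof is precisely this argument, recorded in equation \eqref{curvature projected plus small vectors controlled by theta} and its companion with the extra term $K_3$ bounded by $D\norm{\xi}\norm{\eta}\theta^2$.
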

Notice that the above proposition works for any $\ve^*\leq\ve$ and $\theta^*\leq\theta$.
\subsubsection{Cone invariance for $s$-transversal geodesics}
Fix $\ve_1>0$ and $\theta_1>0$ given by Proposition \ref{conclusion of invariance for parallel geodesics} and Proposition \ref{conclusion cone invariance for s controlled geodesics}. Since $\Gamma_{ij}^k(t,0)=0$, then by relation \eqref{conformal christoffel symbols} we get that $\til{\Gamma}_{ij}^k(t,0)=0$ and also by the same relation we can find a constant $C>0$ such that $|\til{\Gamma}_{ij}^k(t,x)|\leq C\ve_1$. Then, denoting by $\til{G}$ the geodesic vector field for the metric $\til g$ we can find a constant $D>0$ such that
\[
\norm{\til G(v)-(v_1,...,v_n,0,...,0)}\leq D\ve_1.
\]
So, any geodesic $\alpha'(0) = (v_0,...,v_{n-1})$ in $U(\ve_1)$ is $\ve_1-$close to the curve $\beta(s)=(sv_0,...,sv_{n-1})$. If $|v_s|>\theta_1$, then for $s\geq\frac{2\ve_1}{\theta_1}$ the curve $\beta(s)$ escapes the neighborhood $U(2\ve_1)$, so the geodesic $\alpha$ escapes the neighborhood $U(\ve_1)$. Since $\theta_1$ is fixed, let $\ve_2<\ve_1$ such that we have $\frac{2\ve_2}{\theta_1}<\ve_1$. From here the same arguments in \cite{CARNEIRO&PUJALS14} Section 5.3.6. it implies
\begin{proposition}
\label{cone invariance for the deformed geodesic flow}
    There exists $\ve>0$ and $b>0$ such that if $h$ is supported in $U(\ve)$, the family of cones $C(v,P_A^{u,s}(x,v),b)$ is invariant by the action of $d\til g_t$.
\end{proposition}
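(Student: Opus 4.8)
The plan is to prove the proposition by assembling the three geodesic types treated above into a single invariance statement and then invoking the symplectic upgrade of Lemma \ref{dominated splitting in symplectic}. The guiding principle is that outside $U(\ve)$ the deformed metric $\til g$ coincides with the original locally symmetric metric $g$, for which the computation giving $\frac{d}{dt}\Uptheta_A^u(\xi,\eta)>0$ shows that the cones $C(v,P_A^{u,s}(x,v),b)$ are \emph{strictly} invariant with a definite, uniformly positive rate. Thus all the deformation can do is temporarily degrade this rate inside the tube, and the whole argument reduces to controlling how much degradation a single passage through $U(\ve)$ can produce.

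First I would dispose of the easy orbits. Any orbit whose base geodesic never meets $U(\ve)$ sees only the undeformed metric, so cone invariance is exactly that of the original metric. For orbits whose geodesic enters $U(\ve)$ while remaining parallel or almost parallel to $\gamma$, Proposition \ref{conclusion of invariance for parallel geodesics} and Proposition \ref{conclusion cone invariance for s controlled geodesics} already furnish $\frac{d}{dt}\til\Uptheta_A^u>0$ along the whole crossing, hence cone invariance by the criterion of Section \ref{cone criteria}. I would fix once and for all the thresholds $\ve_1$ and $\theta_1$ produced by these two propositions.

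The main work, and the main obstacle, is the $s$-transversal case $|\alpha^s|>\theta_1$, where I cannot guarantee that $\frac{d}{dt}\til\Uptheta_A^u$ stays positive inside the tube. Here I would use the Christoffel bound $|\til\Gamma_{ij}^k(t,x)|\le C\ve_1$ to compare the geodesic with the straight segment $\beta(s)=(sv_0,\dots,sv_{n-1})$, concluding that such a geodesic is trapped in $U(\ve_1)$ for a time no longer than $2\ve_1/\theta_1$. On this short interval the angle function varies at a rate uniformly bounded by some $L>0$ (by compactness of $T^1M$ and continuity of $\til R$), so one crossing can decrease $\til\Uptheta_A^u$ by at most $2L\ve_1/\theta_1$; immediately before entering and after leaving the tube the orbit travels in the undeformed region, where the rate is bounded below by a fixed $c_0>0$ on the cone boundary. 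Choosing $\ve$ (hence the crossing time) small enough relative to $c_0/L$ and enlarging the opening to a suitable $b$, I would arrange that the transient loss inside the tube is dominated by the gain accumulated in the undeformed region, so that $C(v,P_A^{u,s}(x,v),b)$ is carried strictly into itself. This is precisely the bookkeeping of \cite[Section~5.3.6]{CARNEIRO&PUJALS14}, which applies verbatim because it uses only the smallness of the transversal crossing time and not the specific form of the deformation.

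Finally I would assemble the pieces: the three cases exhaust every orbit, so the family $C(v,P_A^{u,s}(x,v),b)$ is $d\til g_t$-invariant for a common $\ve>0$ and $b>0$. The symmetric computation for $\til\Uptheta_A^s$ yields the complementary invariant cone family, and together with the accompanying growth estimates they produce a dominated splitting of the contact structure of the form $E\oplus E^c\oplus F$ with $\dim E=\dim F=r$ (the strong bundles $E^{ss}$ and $E^{uu}$ arising from the $A$-directions, mirroring the undeformed splitting of Lemma \ref{four invariant bundles}). Since the geodesic flow preserves the symplectic form, Lemma \ref{dominated splitting in symplectic} upgrades this dominated splitting to the uniform contraction on $E$ and expansion on $F$ required by the definition of partial hyperbolicity, completing the proof.
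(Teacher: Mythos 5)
Your proposal follows essentially the same route as the paper: fix $\ve_1$, $\theta_1$ from the parallel and almost-parallel propositions, use the Christoffel bound $|\til{\Gamma}_{ij}^k(t,x)|\leq C\ve_1$ to compare $s$-transversal geodesics with straight segments and bound their crossing time by $2\ve_1/\theta_1$, shrink the support of $h$ so this crossing time is small, and defer the final loss-versus-gain bookkeeping to \cite[Section~5.3.6]{CARNEIRO&PUJALS14}. Your closing paragraph on the dominated splitting and Lemma \ref{dominated splitting in symplectic} goes beyond what the proposition asserts (the paper places that step in Corollary \ref{the geodesic flow has dominated splitting} and the proof of Theorem \ref{there exist conformal partially hyperbolic}), but it is consistent with the paper's argument.
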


\begin{corollary}
\label{the geodesic flow has dominated splitting}
    The geodesic flow $\til g_t: T^1M \rarrow T^1M$ admits a dominated splitting of the form $ST^1M=E^{uu}\oplus E^c\oplus E^{ss}$.
\end{corollary}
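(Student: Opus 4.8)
The plan is to convert the strictly invariant cone families of Proposition \ref{cone invariance for the deformed geodesic flow} into the invariant splitting and then read off the domination inequalities, following the classical equivalence between invariant cone fields and dominated splittings. First I would define the strong bundles as the maximal invariant subbundles trapped inside the corresponding cones:
\[
E^{uu}(\theta):=\bigcap_{t\geq 0}(d\til g_t)_{\til g_{-t}(\theta)}\big(C(\til g_{-t}(\theta),P_A^u,b)\big),\qquad
E^{ss}(\theta):=\bigcap_{t\geq 0}(d\til g_{-t})_{\til g_{t}(\theta)}\big(C(\til g_{t}(\theta),P_A^s,b)\big).
\]
Strict invariance makes each of these families of cones nested and decreasing along the orbit, so by compactness of $T^1M$ and continuity of the cone fields the intersections are continuous, $d\til g_t$-invariant subbundles, of dimension $\dim P_A^u=\dim P_A^s=r$. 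They are transverse, since for the chosen opening $b$ the cones around $P_A^u$ and $P_A^s$ meet only at the origin.

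Next I would produce the center. The same cone construction applied to the complementary cones yields an invariant center-stable bundle $E^{cs}$ with $E^{uu}\oplus E^{cs}=S(T^1M)$ and $E^{uu}$ dominating $E^{cs}$, and an invariant center-unstable bundle $E^{cu}$ with $E^{ss}\oplus E^{cu}=S(T^1M)$ and $E^{cu}$ dominating $E^{ss}$; one has $E^{ss}\subset E^{cs}$ and $E^{uu}\subset E^{cu}$. I then set $E^c:=E^{cs}\cap E^{cu}$, which is $d\til g_t$-invariant. Since $E^{uu}\subset E^{cu}$ and $E^{uu}\oplus E^{cs}=S(T^1M)$, we get $E^{cs}+E^{cu}=S(T^1M)$, so a dimension count gives $\dim E^c=2\dim E^{cs}-\dim S(T^1M)=2(n-1)-2r$ together with the direct-sum decomposition
\[
S(T^1M)=E^{uu}\oplus E^c\oplus E^{ss}.
\]
The required domination is inherited: $E^{uu}$ dominates $E^c\oplus E^{ss}\subseteq E^{cs}$, and $E^{uu}\oplus E^c\subseteq E^{cu}$ dominates $E^{ss}$.

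The heart of the matter is not this linear algebra but the step that turns the \emph{pointwise} strict positivity $\frac{d}{dt}\til\Uptheta_A^{u,s}>0$ of Proposition \ref{cone invariance for the deformed geodesic flow} into a \emph{uniform} exponential contraction of cone angles, which is what actually yields the domination estimate $\|(d\til g_t)|_{E}\|\cdot\|(d\til g_{-t})|_{F}\|<Ce^{-\lambda t}$ and guarantees the intersections above are genuine subbundles of constant dimension. Here I would invoke compactness of $T^1M$ together with the scale invariance and continuity of the angle function: strict positivity on the compact unit boundary of each cone field forces a uniform lower bound, hence a definite rate at which the cone angle shrinks under $d\til g_t$; this is exactly the classical criterion producing a dominated splitting from an invariant cone field. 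Finally I would record that $\dim E^{uu}=\dim E^{ss}=r$, since this equality of dimensions is precisely the hypothesis under which Lemma \ref{dominated splitting in symplectic} later promotes this dominated splitting to the full partial hyperbolicity of $\til g_t$.
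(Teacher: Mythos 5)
Your proposal is correct and follows essentially the same route as the paper, which states this corollary without further proof precisely because it is the classical ``strictly invariant cone field $\Rightarrow$ dominated splitting'' criterion applied to Proposition \ref{cone invariance for the deformed geodesic flow}; your construction of $E^{uu}$, $E^{ss}$ as nested intersections of iterated cones, the center $E^c=E^{cs}\cap E^{cu}$, and the observation that $\dim E^{uu}=\dim E^{ss}=r$ feeds exactly into the paper's subsequent use of Lemma \ref{dominated splitting in symplectic}. One minor caveat: your transversality justification (``the cones around $P_A^u$ and $P_A^s$ meet only at the origin'') requires $b>\tfrac12$ and need not hold for the $b$ produced by the proposition, but this is harmless since transversality already follows from the inclusions $E^{ss}\subset E^{cs}$ and $E^{uu}\cap E^{cs}=\{0\}$ that you also establish.
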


\subsubsection{Proof of Theorem \ref{there exist conformal partially hyperbolic}}
The proof of Theorem \ref{there exist conformal partially hyperbolic} goes as follows: We have proved that in Corollary \ref{the geodesic flow has dominated splitting} that the geodesic flow $d\tilde{g}_t: ST^1M \rightarrow ST^1M$ is a symplectic flow with dominated splitting of the form  $ST^1M=E^s\oplus E^c\oplus E^u$. Now Lemma \ref{dominated splitting in symplectic} implies that the vectors in $E^{u}$ and $E^{s}$ are uniformly expanding and contracting, respectively, thus $\til g_t$ is partially hyperbolic by the cone criterium.

We conclude this section with two remarks:

\begin{remark}
    We do not know a prior if the new metric has conjugate points. The best we can expect to answer in this direction is the following: since $-\frac{1}{2}\partial_{ss}^2h\leq \frac{1}{4}$, $K(X,Y)\leq -\frac{1}{4}$, the Lemma \ref{estimates on h} and the estimates on the Christoffel Symbols for the initial metric, it follows that $\tilde K(X,Y)\leq M\ve$. So, the resultant sectional curvatures could be positive, but it is still controlled by $\varepsilon$ and can be made as small as we want. If we have uniform control over the amount of time a geodesic can spend in a region of possible positive curvature, then we can conclude the non existence of conjugate points by the arguments in \cite{GULLIVER1975}.
\end{remark}

\begin{remark}
    For our construction, we fixed the $s-$direction for which we perform the largest curvature deformation. However, under small adaptations to the arguments, it is possible to produce partially hyperbolic examples by deforming the curvature in several directions. To do this, consider some subcollection of indices $\mathcal{I}\subset \{r+1,...,n-1\}$ and let $h$ be given by
    \[
    h(x)=\sum_{s\in\mathcal{I}} h^s(x),
    \]
    where $h^s$ is the function constructed in Section \ref{Breaking the Anosov Property} for the $s-$direction. However, by doing so we certainly create regions of positive curvature. More precisely, in the Kähler setting we have that  $A(x,v)=\text{span}\{Jv\}$ and for some $s,l\in \mathcal{I}$ we may get 
    \[
    \til K(e_s(t),e_l(t))=K(e_s(t),e_l(t))-\frac{1}{2}\partial_{ss}h^s(0)-\frac{1}{2}\partial_{ll}h^l(0)=-\frac{1}{4}+\frac{1}{4}+\frac{1}{4}=\frac{1}{2}.
    \]
\end{remark}
\section{Proof of Theorem \ref{ergodic partially hyperbolic geodesic flow}}
\label{Proof of Theorem B}
The proof of Theorem \ref{ergodic partially hyperbolic geodesic flow} is similar to Theorem \ref{there exist conformal partially hyperbolic}. We work in the same context and with the same tubular neighborhood of a closed geodesic. The difference relies on the different deformations we make to have better control of the resulting sectional curvatures. Here we present only the estimates on the curvature once the proof of partial hyperbolicity is the same as presented in the previous section since the arguments do not depend on the deformation itself but on the curvature tensor estimates, in this case, the same estimates hold. The deformation goes as follows: let $g_{ij}(t,x)$ and $g^{ij}(t,x)$ be the components of the initial locally symmetric metric and its inverse in the Fermi's coordinates defined in the previous section. Considering $h$ as the same function as before, the new metric $\til{g}$ is given by
$$\til{g}_{00}(t,x)=e^hg_{00}(t,x)$$
$$\til{g}_{ij}(t,x)=g_{ij}(t,x), \,\, (i,j)\neq (0,0).$$
The new deformation is almost a conformal deformation of the initial metric, however, we only multiply by a conformal factor the components of the metric in the geodesic direction. By doing so we prevent the curvature from becoming positive as is shown above. Notice that in this case we have $\tilde{g}^{ij}=g^{ij}$ for $(i,j)\neq (0,0)$ and $\tilde{g}^{00}=e^{-h}g_{00}$.

\begin{remark}
\label{remark derivative g00}
  We have the following identities for the derivatives of the new metric:
\begin{enumerate}
    \item $\partial_i(\til{g}_{00})=\partial_ih\til{g}_{00}+e^h\partial_ig_{00}$.
    \item $\partial^2_{ij}(\til{g}_{00})=\partial_{ij}^2h\til{g}_{00}+\partial_ih\partial_jh\til{g}_{00}+e^h\partial_ih\partial_jg_{00}+e^h\partial_jh\partial_ig_{00}+e^h\partial_{ij}^2g_{00}$
\end{enumerate}
\end{remark}
 We will first evaluate the Christoffel Symbols of the new metric. We will split it into cases
\begin{enumerate}
    \item $i,j\neq 0$
    \begin{enumerate}
        \item $k=0$
        \begin{align*}
            \til{\Gamma_{ij}^0}&=\frac{\til{g}^{l0}}{2}(\partial_i\til{g}_{jl}+\partial_j\til{g}_{il}-\partial_l\til{g}_{ij})\\
            &=\sum_{l\neq0}\frac{\til{g}^{l0}}{2}(\partial_i\til{g}_{jl}+\partial_j\til{g}_{il}-\partial_l\til{g}_{ij})+\frac{\til{g}^{00}}{2}(\partial_i\til{g}_{j0}+\partial_j\til{g}_{i0}-\partial_0\til{g}_{ij})\\
            &=\sum_{l\neq0}\frac{{g}^{l0}}{2}(\partial_i g_{jl}+\partial_j g_{il}-\partial_l g_{ij})+\frac{e^{-h}g^{00}}{2}(\partial_i g_{j0}+\partial_j g_{i0}-\partial_0 g_{ij}).
        \end{align*}
        \item $k\neq 0$
        \[
        \til{\Gamma_{ij}^k}=\Gamma_{ij}^k.
        \]
    \end{enumerate}
    \item $i=0$, $j\neq 0$
    \begin{enumerate}
        \item $k=0$
        \[
        \til{\Gamma}_{0j}^0=\sum_{l\neq 0}\frac{g^{0l}}{2}(\partial_0g_{jl}+\partial_jg_{0l}-\partial_lg_{0j})+e^{-h}\frac{g^{00}}{2}(\partial_0g_{j0}+e^h\partial_jg_{00}-\partial_0g_{0j})+\frac{g^{00}}{2}g_{00}\partial_jh.
        \]
        \item $k\neq 0$
        \[
        \til{\Gamma}_{0j}^k=\sum_{l\neq 0}\frac{g^{kl}}{2}(\partial_0g_{jl}+\partial_jg_{0l}-\partial_lg_{0j})+\frac{g^{k0}}{2}(\partial_0g_{j0}+e^h\partial_jg_{00}-\partial_0g_{0j})+\frac{g^{k0}}{2}g_{00}e^h\partial_jh.
        \]
    \end{enumerate}
    \item $i=j=0$
    \begin{enumerate}
        \item $k=0$
        \[
        \til{\Gamma}_{00}^0=\sum_{l\neq 0}\left(\frac{g^{0l}}{2}(\partial_0g_{0l}+\partial_0g_{0l}-e^h\partial_lg_{00})-e^h\frac{g^{0l}}{2}g_{00}\partial_lh\right)+\frac{g^{00}}{2}\partial_0g_{00}.
        \]
        \item $k\neq 0$
        \[
        \til{\Gamma}_{00}^k=\sum_{l\neq 0}\left(\frac{g^{kl}}{2}(\partial_0g_{0l}+\partial_0g_{0l}-e^h\partial_lg_{00})-e^h\frac{g^{kl}}{2}g_{00}\partial_lh\right)+\frac{g^{k0}}{2}\partial_0g_{00}.
        \]
    \end{enumerate}
\end{enumerate}
The properties of $h$ and the computations above provide that $\til{\Gamma}_{ij}^k(t,0)=\Gamma_{ij}^k(t,0)=0$ and $\gamma$ is still a geodesic for the metric $\til g$. Furthermore, we also have by the properties of $h$ that the contribution of the new Christoffel symbols to the curvature $\til{R}_{s00s}$ that appears in the equation \eqref{NEW curvature in coordinates} below is close to the contribution of the initial Christoffel symbols would be and with the same sign inside the deformed region. To see this, consider for instance the functions $\til f_{nm}(x)=\til\Gamma_{0s}^m\til\Gamma_{0s}^n$ and $f_{nm}(x)=\Gamma_{0s}^m\Gamma_{0s}^n$. A straightforward computation shows that at $x=0$ we get that $\til f_{nm}$ and $f_{nm}$ are equal up to the second derivative. Indeed we have $\til{\Gamma}_{0s}^m\til{\Gamma}_{0s}^n\leq\Gamma_{0s}^m\Gamma_{0s}^n$ by computing higher order derivatives of the same functions for some indices $(m,n)$ and shrinking the deformed region if necessary. By doing the same kind of comparison with the terms $\til\Gamma_{00}^m\til\Gamma_{ss}^n$ we get that $-\til\Gamma_{ss}^m\til\Gamma_{00}^n\leq-\Gamma_{ss}^m\Gamma_{00}^n$.
\subsection{Curvature over the central geodesic}
Remember that from Lemma \ref{curvature in coordinates} we can write
\begin{equation}
\label{NEW curvature in coordinates}
\til{R}_{ijkr}=\til{R}_{ijk}^l\til{g}_{lr}=\frac{1}{2}(\partial_{jr}^2\til{g}_{ik}+\partial_{ik}^2\til{g}_{jr}-\partial_{ir}^2\til{g}_{jk}-\partial_{jk}^2\til{g}_{ir})+\til{g}_{mn}(\til{\Gamma}_{ki}^m\til{\Gamma}_{jr}^n-\til{\Gamma}_{ri}^m\til{\Gamma}_{jk}^n).
\end{equation}
In particular for $j=k$ and $i=r$ we get
\begin{equation*}
\label{sectional curvature}
    \til{R}_{ikki}=\frac{1}{2}(\partial_{ki}^2\til{g}_{ik}+\partial_{ik}^2\til{g}_{ki}-\partial_{ii}^2\til{g}_{kk}-\partial_{kk}^2\til{g}_{ii})+\til{g}_{mn}(\til{\Gamma}_{ki}^m\til{\Gamma}_{ki}^n-\til{\Gamma}_{ii}^m\til{\Gamma}_{kk}^n).
\end{equation*}
For $k=0$ at $x=0$, we get by Remark \ref{remark derivative g00}
\begin{align*}
    \til{R}_{i00i}&=\frac{1}{2}(\partial_{0i}^2\til{g}_{i0}+\partial_{i0}^2\til{g}_{0i}-\partial_{ii}^2\til{g}_{00}-\partial_{00}^2\til{g}_{ii})\\
    &=\frac{1}{2}(\partial_{0i}^2g_{i0}+\partial_{i0}^2g_{0i}-e^h\partial_{ii}^2g_{00}-\partial_{00}^2g_{ii})\\
    &-\frac{1}{2}\partial_{ii}^2he^hg_{00}-\frac{1}{2}(\partial_ih)^2e^hg_{00}-e^h\partial_ih\partial_ig_{00}\\
    &=R_{i00i}-\frac{1}{2}\partial^2_{ii}h
\end{align*}
Then, for $i=s$ we get $\til{R}_{s00s}(t,0)=0$ and for $i\neq s$ $\til{R}_{i00i}(t,0)=R_{i00i}(t,0)<0$.
For $Y=Y^iX_i$ a vector field orthonormal to $X_s$ at $x=0$ such that $Y\neq X_0$ we get 
 \begin{align*}
     \til{R}(X_s,Y,Y,X_s)=Y^iY^j\til{R}_{sijs}&=\frac{1}{2}\sum_{(i,j)\neq(0,0)}Y^iY^j(\partial_{is}^2g_{sj}+\partial_{sj}^2g_{is}-\partial_{ss}^2g_{ij}-\partial_{ij}^2g_{ss})\\
     &+\frac{1}{2}Y_0^2(\partial_{0s}^2g_{s0}+\partial_{s0}^2g_{0s}-\partial_{ss}^2\til{g}_{00}-\partial_{00}^2g_{ss})\\
     &=R(X_s,Y,Y,X_s)-\frac{1}{2}\partial_{ss}^2hY_0^2\\
     &\leq-\frac{1}{4}+\frac{1}{4}Y_0^2\\
     &<0   
 \end{align*}
 The inequality above shows that every sectional curvature is negative over the central geodesic but the sectional curvature of the plane $\Pi=\text{span}\{\gamma',e_s\}$, which is zero.
 \subsection{Curvature outside the central geodesic}
 Notice that outside the central geodesic, we can write
 \[
 \til{R}_{s00s}=R_{s00s}^*-\frac{1}{2}\partial_{ss}^2he^hg_{00}-\frac{1}{2}(\partial_sh)^2e^hg_{00}-e^h\partial_sh\partial_sg_{00},
 \]
 where
 \[
 R_{s00s}^*=\frac{1}{2}(\partial_{0s}^2g_{s0}+\partial_{s0}^2g_{0s}-e^h\partial_{ss}^2g_{00}-\partial_{00}^2g_{ss})+\til{g}_{mn}(\til{\Gamma}_{0s}^m\til{\Gamma}_{0s}^n-\til{\Gamma}_{ss}^m\til{\Gamma}_{00}^n),
 \]
 which is similar to the expression for $R_{s00s}$ but for the terms $-e^h\partial_{ss}^2g_{00}$ and the small perturbation of the Christoffel Symbols, which does not change the sign of the contribution of this part as mentioned before. In Fermi coordinates we have that $g_{s0}\neq0$ and $g_{ss}\neq 1$ for $x\neq 0$, then we can write
 \[
 \til{K}(X_s,X_0)=\til{K}\left(\frac{X_s}{g_{ss}},Y\right)
 \]
 where $Y$ is the orthonormal (to $X_s$) vector field we obtain by applying Gram–Schmidt process for the metric $\til{g}$:
 \[
 Y=\left(\frac{g_{ss}}{e^hg_{00}g_{ss}-g_{s0}^2}\right)X_0-\left(\frac{g_{s0}}{e^hg_{00}g_{ss}-g_{s0}^2}\right)X_s=\til{\alpha}X_0+\til{\beta}X_s.
 \]
 Notice that since $Y$ is a normal vector, we get $\til{\alpha}< 1$ (for $x\neq 0)$. Besides that, since $h\leq 0$ an easy computation shows that $\til{\alpha}\geq \alpha$, where $\alpha$ is the component we would obtain by the Gram-Schmidt process for the metric $g$, namely $\alpha=g_{ss}(g_{00}g_{ss}-g_{s0}^2)^{-1}$. Then, we can write
 \begin{align*}
 \til{K}(X_s,X_0)&=\til{K}\left(\frac{X_s}{g_{ss}},Y\right)=\til{R}\left(\frac{X_s}{g_{ss}},Y,Y,\frac{X_s}{g_{ss}}\right)\\
                &=\frac{\til{\alpha}^2}{g_{ss}^2}\til{R}(X_s,X_0,X_0,X_s)=\frac{\til{\alpha}^2}{g_{ss}^2}R_{s00s}^*\\
                &+\frac{\til{\alpha}^2}{g_{ss}^2}\left(-\frac{1}{2}\partial_{ss}^2he^hg_{00}-\frac{1}{2}(\partial_sh)^2e^hg_{00}-e^h\partial_sh\partial_sg_{00}\right).
 \end{align*}
By the observations above and since $R_{s00s}^*\leq R_{s00s}<0$, we get
 \[
 \frac{\til{\alpha}^2}{g_{ss}^2}R_{s00s}^*\leq \frac{\alpha^2}{g_{ss}^2}R_{s00s}=K(X_s,X_0)\leq- \frac{1}{4}
 \]
 On the other hand, one has
 \begin{align*}
     -\frac{1}{2}\partial_{ss}^2he^hg_{00}-\frac{1}{2}(\partial_sh)^2e^hg_{00}-e^h\partial_sh\partial_sg_{00}\leq-\frac{1}{2}\partial_{ss}^2h-e^h\partial_sh\partial_sg_{00},
 \end{align*}
 and for this inequality, we used the Taylor expansion of $g_{00}$ in Fermi Coordinates (cf. \cite{MANASSE63fermi} and ajust the indices) in our case is given by
 \[
 g_{00}=1-2\sum_{k}R_{k00k}|_\gamma x_k^2+\mc{O}(x^3),
 \]
 so, for $\ve>0$ small enough we guarantee that $e^hg_{00}\leq1$. This expansion also give us that $\partial_sg_{00} = \frac{1}{2}x_s+\mc{O}(x_s^2)$, then the term of order 1 determines the sign of $\partial_sg_{00}$. Hence, we need to study the sign of the following function
 \[
 g(x):=-\frac{1}{2}\partial_{ss}^2h-e^h\partial_sh\partial_sg_{00}=\left(f''(x_s)+x_sf'(x_s)e^h\right)\Phi(x)
 \]
  In fact, it is enough to analyze the behavior of $p(x_s)=f''(x_s)+x_sf'(x_s)$, once the delicate part is analyzing for $x_s$ around $0$ where $x_sf'(x_s)>0$, so since $e^h\leq 1$ the worst scenario is given by $p$. Notice that 
 \begin{enumerate}
     \item $p(0)=\frac{1}{4}$.
     \item $p'(0)=0$.
     \item $p''(0)=-\frac{12}{\ve^4}+\frac{1}{2}<0$.
 \end{enumerate}
 We conclude that $x=0$ is a point of maximum value for $p$, indeed it is a global maximum by checking other critical points. See its graph in Figure \ref{fig:Figure2} bellow.

 \begin{figure}[!htb]
     \centering
     \includegraphics[width=0.7\linewidth]{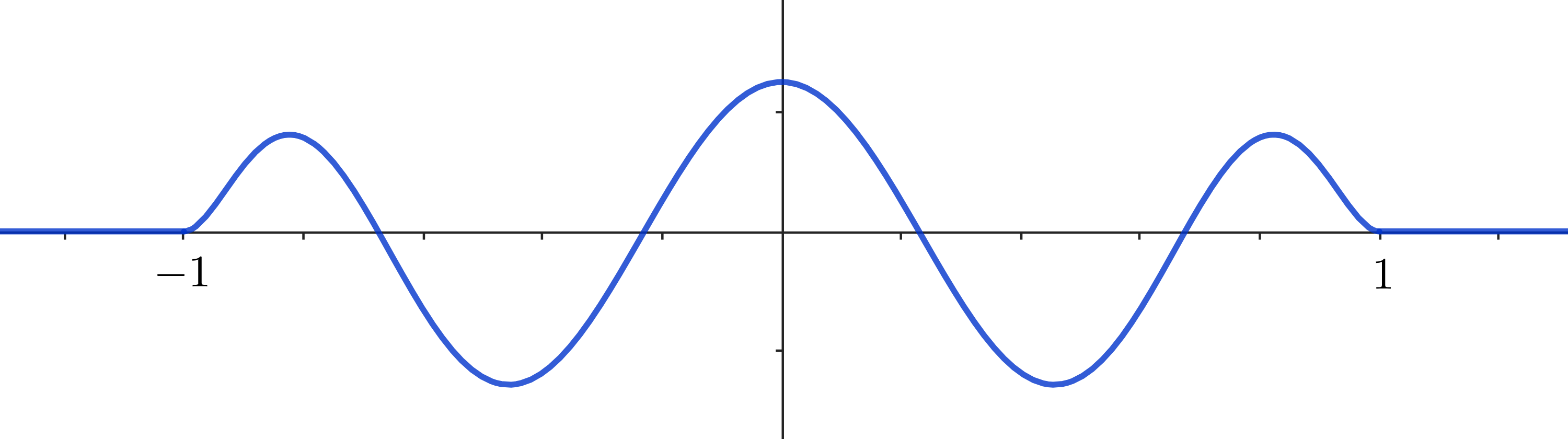}
     \caption{Graph  of  the function $p$ for $\ve=1$}
     \label{fig:Figure2}
 \end{figure}

We conclude that $|g(x)|\leq \frac{1}{4}$ and equality holds if, and \noindent only if, $x=0$. Finally, $\frac{\til{\alpha}^2}{g_{ss}^2}|g(x)|<\frac{1}{4}$. Then, $\til{K}(X_s,X_0)\leq 0$ with equality if, and only if, $x=0$.  

\noindent More generally, for any other orthonormal (to $X_s$) vector field $Y$ not equal to $X_0$ we write for $x\neq 0$
\begin{align*}
    \til{K}\left(\frac{X_s}{g_{ss}},Y\right)&=
\frac{1}{g_{ss}^2}\til{R}(X_s,Y,Y,X_s)\\
                &=\frac{1}{g_{ss}^2}R^*(X_s,Y,Y,X_s)+Y_0^2\left(-\frac{1}{2}\partial_{ss}^2he^hg_{00}-\frac{1}{2}(\partial_sh)^2e^hg_{00}-e^h\partial_sh\partial_sg_{00}\right)\\
    &< K(X_s,Y)+\frac{Y_0^2}{4}\\
    &<0
\end{align*}
\noindent We have proved the following proposition:
\begin{proposition}
    All sectional curvatures for $\til{g}$ are negative but the sectional curvature $\til{K}(\gamma',e_s)\equiv 0$.
\end{proposition}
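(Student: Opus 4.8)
The plan is to establish the claim pointwise, splitting the analysis according to whether we sit on the central geodesic $\gamma$ (that is, $x=0$ in Fermi coordinates) or off it ($x\neq 0$), and in each case reducing a general sectional curvature to the coordinate curvature components controlled by the formula \eqref{NEW curvature in coordinates}.

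First I would treat the central geodesic. Here $g_{ij}(t,0)=\delta_{ij}$ and $\Gamma_{ij}^k(t,0)=0$, so in \eqref{NEW curvature in coordinates} the quadratic Christoffel term drops out and only the second-derivative part survives. Using Remark \ref{remark derivative g00} to evaluate $\partial^2_{ii}\tilde g_{00}$ together with the construction of $h$, one gets $\tilde R_{i00i}(t,0)=R_{i00i}(t,0)-\tfrac12\partial^2_{ii}h$, which vanishes exactly when $i=s$ and is strictly negative otherwise. To cover every plane and not just the coordinate ones, I would compute $\tilde R(X_s,Y,Y,X_s)$ for an arbitrary unit $Y\perp X_s$ with $Y\neq X_0$, obtaining $\tilde R(X_s,Y,Y,X_s)=R(X_s,Y,Y,X_s)-\tfrac12\partial^2_{ss}h\,Y_0^2\le -\tfrac14+\tfrac14 Y_0^2<0$; since planes avoiding the $0$-direction are unaffected by the deformation, this shows that along $\gamma$ every sectional curvature is negative except $\tilde K(\gamma',e_s)=\tilde K(X_0,X_s)$, which is zero.

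Off the central geodesic the Christoffel symbols no longer vanish, so the quadratic term in \eqref{NEW curvature in coordinates} must be retained. The strategy is to write $\tilde R_{s00s}=R^*_{s00s}+(\text{deformation terms})$, where $R^*_{s00s}$ differs from $R_{s00s}$ only through the factor $e^h$ and the perturbed Christoffel products; the comparisons $\tilde\Gamma^m_{0s}\tilde\Gamma^n_{0s}\le\Gamma^m_{0s}\Gamma^n_{0s}$ and $-\tilde\Gamma^m_{ss}\tilde\Gamma^n_{00}\le-\Gamma^m_{ss}\Gamma^n_{00}$, valid after shrinking the deformed region, give $R^*_{s00s}\le R_{s00s}<0$. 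I would then pass to a $\tilde g$-orthonormal frame via Gram--Schmidt (noting $\tilde\alpha\ge\alpha$ because $h\le 0$, and $\tilde\alpha<1$ since the resulting vector is a unit normal), so that $\tilde K(X_s,X_0)=\tfrac{\tilde\alpha^2}{g_{ss}^2}R^*_{s00s}+\tfrac{\tilde\alpha^2}{g_{ss}^2}g(x)$ with $g(x)=-\tfrac12\partial^2_{ss}h-e^h\partial_sh\,\partial_s g_{00}$.

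The main obstacle is controlling the sign of the deformation contribution $g(x)$ off $\gamma$, and this is where the Fermi expansion and the polynomial bookkeeping enter. Using the Taylor expansion $g_{00}=1-2\sum_k R_{k00k}|_\gamma x_k^2+\mathcal O(x^3)$ (cf.\ \cite{MANASSE63fermi}), which both forces $e^h g_{00}\le 1$ for small $\ve$ and gives $\partial_s g_{00}=\tfrac12 x_s+\mathcal O(x_s^2)$, so that the sign of $x_s f'(x_s)$ is pinned down, the question reduces to showing that the one-variable function $p(x_s)=f''(x_s)+x_s f'(x_s)$ attains its global maximum $p(0)=\tfrac14$; the checks $p'(0)=0$ and $p''(0)=-12/\ve^4+\tfrac12<0$, together with inspection of the remaining critical points, give this, whence $|g(x)|\le\tfrac14$ with equality only at $x=0$. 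Combining $\tfrac{\tilde\alpha^2}{g_{ss}^2}R^*_{s00s}\le-\tfrac14$ with $\tfrac{\tilde\alpha^2}{g_{ss}^2}|g(x)|<\tfrac14$ forces $\tilde K(X_s,X_0)\le 0$, strict for $x\neq 0$. Finally, for a general unit $Y\perp X_s$ the same estimates yield $\tilde K(X_s/g_{ss},Y)<K(X_s,Y)+\tfrac14 Y_0^2<0$, which closes the off-geodesic case and establishes the proposition.
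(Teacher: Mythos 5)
Your proposal is correct and follows essentially the same route as the paper: the same on/off-geodesic case split, the reduction $\tilde R_{i00i}=R_{i00i}-\tfrac12\partial^2_{ii}h$ along $\gamma$, the $R^*_{s00s}$ decomposition with the Christoffel comparisons and Gram--Schmidt off $\gamma$, the Fermi expansion of $g_{00}$, and the maximization of $p(x_s)=f''(x_s)+x_sf'(x_s)$. The only imprecision is that $\tilde K(X_s,X_0)$ equals $\tfrac{\tilde\alpha^2}{g_{ss}^2}R^*_{s00s}$ plus the full deformation term $-\tfrac12\partial^2_{ss}h\,e^hg_{00}-\tfrac12(\partial_sh)^2e^hg_{00}-e^h\partial_sh\,\partial_sg_{00}$, which is only bounded above by your $g(x)$ (using $e^hg_{00}\le 1$ and discarding the nonpositive middle term), so your displayed relation should be an inequality rather than an equality; the subsequent estimate $|g(x)|\le\tfrac14$ then closes the argument exactly as in the paper.
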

\noindent From this proposition, we conclude that $(M,\til{g})$ has non-positive sectional curvatures and just one plane with zero curvature along a single geodesic. We prove the following corollaries that imply Corollary \ref{new geodesic flow has all the good properties}:
\begin{corollary}
      The metric $\til{g}$ has no conjugate points.  
\end{corollary}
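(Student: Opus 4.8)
The plan is to reduce the statement to the classical fact that a Riemannian manifold all of whose sectional curvatures are non-positive has no conjugate points, which the preceding Proposition makes immediately applicable: it asserts $\til K\le 0$ everywhere, with equality only on the plane $\mathrm{span}\{\gamma',e_s\}$ along $\gamma$. Since the deformation is $C^2$, the curvature tensor $\til R$ is continuous and the Jacobi equation along any $\til g$-geodesic is well posed, so it suffices to rule out conjugate points along a fixed geodesic. Recall also that $M$ is compact, hence $(M,\til g)$ is complete and geodesics are defined on all of $\R$.

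Concretely, I would fix a $\til g$-geodesic $\alpha$ and a Jacobi field $J$ along $\alpha$ with $J(0)=0$ and $J\not\equiv 0$, and study the function $f(t):=\til g(J(t),J(t))\ge 0$. Differentiating twice and using the Jacobi equation $J''=-\til R(J,\alpha')\alpha'$ gives
\[
f''(t)=2\,\til g(J',J')-2\,\til g(\til R(J,\alpha')\alpha',J)=2\norm{J'}^2-2\,\til K(J,\alpha')\,|J\wedge\alpha'|^2 .
\]
Because $\til K\le 0$, the quantity $\til K(J,\alpha')\,|J\wedge\alpha'|^2$ is non-positive (and it is simply $0$ in the degenerate case where $J$ is parallel to $\alpha'$, since then $\til R(J,\alpha')\alpha'=0$), so the second term is non-negative. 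Hence $f''(t)\ge 2\norm{J'}^2\ge 0$, that is, $f$ is convex.

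The conclusion then follows from elementary convexity. Since $f\ge 0$, $f(0)=0$, and $f$ is convex, if $f(t_0)=0$ for some $t_0>0$ then for $t\in[0,t_0]$ one has $f(t)\le (1-t/t_0)f(0)+(t/t_0)f(t_0)=0$, forcing $f\equiv 0$ on $[0,t_0]$. This yields $J\equiv 0$ on $[0,t_0]$, and in particular $J'(0)=0$; by uniqueness of Jacobi fields with $J(0)=J'(0)=0$ we get $J\equiv 0$, contradicting the choice of $J$. Therefore no point $\alpha(t_0)$ with $t_0>0$ is conjugate to $\alpha(0)$ along $\alpha$, and since $\alpha$ was arbitrary, $(M,\til g)$ has no conjugate points.

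I do not expect a serious obstacle here. The only point that genuinely needs care is that the degenerate plane where $\til K=0$ must not spoil the argument, but the decisive inequality $f''\ge 2\norm{J'}^2$ uses only $\til K\le 0$ and never strict negativity, so the vanishing of curvature along $\gamma$ is harmless. The regularity is also sufficient: a $C^2$ metric has continuous curvature, which is all that the Jacobi-field analysis requires.
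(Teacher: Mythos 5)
Your proof is correct and follows exactly the route the paper takes: the paper's own proof simply says the corollary ``is immediate from non-positive curvature,'' citing the preceding Proposition that $\til K\le 0$, and you have merely spelled out the classical convexity argument (the second variation of $\til g(J,J)$ for a Jacobi field vanishing at a point) that makes this implication immediate. The details you supply, including the handling of the degenerate plane where $\til K=0$ and the regularity remark, are all sound.
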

\begin{proof}
    This is immediate from non-positive curvature.
\end{proof}
\begin{corollary}
    The geodesic flow $\til{g}_t$ is expansive.
\end{corollary}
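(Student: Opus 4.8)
The plan is to argue by contradiction using the Flat Strip Theorem, which applies since we have just shown that $(M,\til g)$ is non-positively curved. As $M$ is compact, the metric $\til g$ is complete, so its universal cover endowed with the lifted metric is a Hadamard manifold and the covering map $q\colon\til M\to M$ is a local isometry. Suppose $\til g_t$ were not expansive. Then, as recalled in the introduction, there would exist two distinct geodesics remaining at bounded distance for all $t\in\R$; lifting to $\til M$ produces two distinct bi-asymptotic geodesics, and the Flat Strip Theorem yields an isometrically embedded, totally geodesic flat strip $S\cong[0,a]\times\R$ with $a>0$.

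First I would exploit that $S$ is simultaneously flat and totally geodesic. Since the second fundamental form of $S$ vanishes, the Gauss equation identifies the intrinsic curvature of $S$ with the ambient sectional curvature of its tangent planes $T_pS$; as $S$ is intrinsically Euclidean this forces the ambient sectional curvature of $T_pS$ to vanish for every $p\in S$. Because $q$ is a local isometry, each plane $q_*(T_pS)$ is then a plane of zero $\til g$-sectional curvature in $M$.

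Next I would invoke the Proposition proved above: the only plane of vanishing $\til g$-sectional curvature is $\Pi=\mrm{span}\{\gamma',e_s\}$, and it occurs solely along the central geodesic $\gamma$. Hence $q(p)\in\gamma$ for every $p\in S$, i.e.\ $q(S)\subseteq\gamma$. But $S$ is a $2$-dimensional submanifold of $\til M$, while $\gamma$ is $1$-dimensional, and $q$ is a local diffeomorphism, so the image $q(S)$ cannot be contained in a $1$-dimensional set. This contradiction shows that no flat strip exists, and therefore $\til g_t$ is expansive.

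The step I expect to be the main obstacle is verifying that the hypotheses of the Flat Strip Theorem are genuinely met and correctly transferred between $M$ and its universal cover: one must confirm completeness (immediate from compactness), that non-expansivity truly yields a pair of bi-asymptotic geodesics that stay distinct after lifting, and above all that the resulting strip has strictly positive width $a>0$ --- a degenerate strip of width zero is just a single geodesic and produces no contradiction. Once these points are secured, the sharp curvature dichotomy from the previous subsection, where the zero-curvature locus is as small as it can possibly be, closes the argument immediately.
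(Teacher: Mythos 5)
Your proof is correct and takes essentially the same approach as the paper: both argue by contradiction via the Flat Strip Theorem and rule out the flat strip using the preceding Proposition, which says that the only vanishing $\til{g}$-sectional curvature is that of $\mathrm{span}\{\gamma',e_s\}$ along the single central geodesic. Your write-up simply makes explicit, via the Gauss equation for the totally geodesic strip and the dimension count for its projection, the step that the paper's one-line proof leaves implicit.
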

\begin{proof}
    If the geodesic flow was not expansive, then by the well-known Flat Strip Theorem \cite{EBERLEIN&ONEIL73visibility} there would exist flat strips on the universal cover of $(M,\til{g})$, which is impossible since $(M,\til{g})$ has only one closed geodesic with zero curvature and negative curvature elsewhere.
\end{proof}
\begin{corollary}
    The dynamical system $(S(T^1M),\tilde{g_t},\widetilde{\mrm{Liou})}$ is ergodic.
\end{corollary}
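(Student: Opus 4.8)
The plan is to establish ergodicity through the \emph{Hopf argument} in the non-uniformly hyperbolic setting of Pesin, the decisive geometric input being that $(M,\til g)$ is non-positively curved and that its \emph{singular set} reduces to a single closed orbit. First I would identify this singular set. A non-trivial orthogonal parallel Jacobi field $J$ along a geodesic $\sigma$ satisfies $\til R(J,\sigma')\sigma'=0$, and since $\til g$ is non-positively curved this forces $\til K(J,\sigma')\equiv 0$ along all of $\sigma$; by the previous proposition the only plane of vanishing curvature is $\mrm{span}\{\gamma',e_s\}$ along $\gamma$, so $\sigma$ must coincide with $\gamma$ and $J$ with a multiple of $e_s$. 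Hence the set of vectors of rank larger than one is exactly the closed orbit $\{\til g_t(\pm\gamma'(0))\}$, which is a single periodic orbit and therefore has zero Liouville measure. In other words $(M,\til g)$ is a rank one manifold whose regular (rank one) set $\mc R$ has full measure.

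On $\mc R$ the geodesic flow is non-uniformly hyperbolic: for Liouville-a.e. vector the transverse Lyapunov exponents are non-zero, which here follows from the theory of non-positively curved rank one manifolds of Ballmann--Brin \cite{BALMANN&BRIN1982ergodicity} and Burns \cite{BURNS1983hyperbolic}, now unobstructed because the higher-rank locus is the negligible orbit above. Since the deformation can be taken of class $C^{2k_0+2}$ with $k_0\ge 2$, the flow is of class $C^{1+\alpha}$ and preserves the smooth measure $\widetilde{\mrm{Liou}}$, so Pesin's theory provides, at a.e. point of $\mc R$, stable and unstable manifolds with \emph{absolutely continuous} holonomies. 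I would then run the Hopf argument: the forward and backward Birkhoff averages of a continuous observable are constant along unstable and stable manifolds respectively, and absolute continuity promotes this to a.e.-constancy on local product neighbourhoods, whence the ergodic components of $\widetilde{\mrm{Liou}}$ are open modulo a null set. Here the non-positive curvature is again helpful, since the weak stable and weak unstable horospherical foliations are globally defined and continuous on all of $S(T^1M)$, which is what allows the local product structure to be exploited even near the singular orbit.

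To pass from local to global ergodicity I would invoke transitivity. We have shown that $\til g_t$ has no conjugate points and is expansive, so by Ruggiero's theorem \cite{RUGGIERO1997expansive} the flow is topologically transitive. A topologically transitive flow preserving the fully supported measure $\widetilde{\mrm{Liou}}$ and whose ergodic components are open modulo null sets admits a single component, and therefore $\til g_t$ is ergodic with respect to $\widetilde{\mrm{Liou}}$. I expect the main obstacle to be exactly the non-uniform hyperbolicity and the closing-up of the Hopf argument: although the singular set is here a single tame closed orbit, one must still certify that Pesin's absolute continuity, together with the global horospherical foliations and topological transitivity, genuinely upgrade the a.e.-local constancy of Birkhoff averages to global constancy.
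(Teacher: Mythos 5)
Your proposal is correct and follows essentially the same route as the paper: the paper's entire proof is the observation that the set of vectors of rank greater than one has zero Liouville measure (here it is just the closed orbit of $\pm\gamma'$), followed by a citation of the Ballmann--Brin and Burns arguments, which are precisely the Pesin/Hopf machinery you unpack. Your additional globalization step via Ruggiero's transitivity theorem is harmless but not needed, since those cited rank-one arguments already yield ergodicity on the regular set, which has full Liouville measure in this construction.
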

\begin{proof}
    The set of vectors with a rank bigger than one has measure zero, so it follows from the arguments in \cite{BALMANN&BRIN1982ergodicity} and \cite{BURNS1983hyperbolic}.
\end{proof}

\begin{corollary}
    The geodesic flow $\tilde g_t$ has a unique measure of maximal entropy.
\end{corollary}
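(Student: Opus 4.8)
The plan is to deduce uniqueness from Knieper's theorem on the uniqueness of the measure of maximal entropy for compact rank one manifolds of non-positive curvature (G. Knieper, \emph{Annals of Mathematics}, 1998). By the previous proposition we already know that $(M,\til g)$ has non-positive sectional curvature, so the only point that requires verification is that $(M,\til g)$ is a rank one manifold in the sense of \cite{BALMANN&BRIN1982ergodicity}.

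Recall that the rank of a unit vector $v$ is the dimension of the space of parallel Jacobi fields along $\gamma_v$, and that the rank of $(M,\til g)$ is the infimum of these dimensions over all unit vectors; this infimum is always at least one, since $\gamma_v'$ itself is a parallel Jacobi field. By the remark stated in Section \ref{Breaking the Anosov Property}, a parallel orthonormal field $X$ along a geodesic $\alpha$ is a Jacobi field if and only if $\til K(X,\alpha')\equiv 0$. Since we have shown that all sectional curvatures of $\til g$ are strictly negative off the central geodesic $\gamma$ and vanish only on the single plane $\mathrm{span}\{\gamma',e_s\}$ along $\gamma$, any geodesic $\alpha$ whose orbit is not the closed orbit of $\gamma'$ must enter the region of strictly negative curvature, where no nonzero perpendicular parallel field can keep its sectional curvature identically zero. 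Consequently every unit vector not tangent to $\gamma$ has rank exactly one, and the set of higher-rank vectors is contained in the single periodic orbit $\{\til g_t(\gamma'(0))\}$. In particular $(M,\til g)$ has rank one.

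With non-positive curvature and the rank one property in hand, Knieper's theorem applies directly and yields a unique measure of maximal entropy for $\til g_t$. As an alternative route, one could instead combine the expansivity proved in the previous corollary with the density of periodic orbits and the local product structure of \cite{RUGGIERO1997expansive} to run a Bowen-type specification argument, but the rank one result delivers the conclusion immediately.

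The main obstacle is precisely the verification of the rank one condition; once the curvature estimates of the previous proposition are available this is straightforward, and the deep input---the actual uniqueness statement---is supplied entirely by Knieper's theorem.
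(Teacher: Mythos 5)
Your proof is correct and rests on exactly the same key input as the paper's: Knieper's uniqueness theorem \cite{KNIEPER1998uniqueness} for compact rank one manifolds of non-positive curvature, with non-positivity supplied by the preceding proposition. The only divergence is how the rank one hypothesis is established. The paper simply cites Theorem 7.2 of \cite{CARNEIRO&PUJALS14}, whereas you derive it directly from the curvature proposition: if a unit vector had higher rank, subtracting the $\gamma_v'$-component of a second parallel Jacobi field gives a nonzero \emph{perpendicular} parallel Jacobi field $X$, which forces $\til K(X,\alpha')\equiv 0$ along the geodesic; but any geodesic not tangent to the central geodesic $\gamma$ must leave the image of $\gamma$ (by uniqueness of geodesics), where all sectional curvatures of $\til g$ are strictly negative, a contradiction. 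This argument is sound, self-contained, and in fact yields slightly more than what the paper records, namely that the higher-rank vectors are confined to the orbits of $\pm\gamma'$, so the rank one set has full measure. The paper's citation is shorter but imports the rank one property from outside; your route makes the corollary depend only on results proved in the paper itself. The alternative specification-type argument you sketch via expansivity is not needed and is left undeveloped, but the main line of your proof stands on its own.
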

\begin{proof}
    By Theorem 7.2 of $\cite{CARNEIRO&PUJALS14}$, we know that the metric $\tilde g$ is rank $1$. Now, since $\tilde K\leq 0$, then the main theorem from \cite{KNIEPER1998uniqueness} implies that $\tilde g_t$ admits a unique measure of maximal entropy.
\end{proof}

{\textbf{Acknowledgements:} The authors express their gratitude to Prof. Enrique Pujals for the insightful discussions during the course of this work. The first author thanks his PhD advisor Prof. Gabriel Ponce for his guidance and for presenting the problem. He also thanks Penn State’s Department of Mathematics for their hospitality during the development of this work. In particular, he is tremendously grateful to Prof. Federico Rodriguez-Hertz with whom he had many productive discussions. The second author, supported by Instituto Serrapilheira, grant F0265 “Jangada Dinâmica: Impulsionando Sistemas Dinâmicos na Região Nordeste”, wants to thank the Universidade Federal do Ceará, Brazil, for their hospitality along this project. He also wants to thank Rafael Potrie and Martín Sambarino for many discussions and fruitful conversations. The third author also thanks the Department of Mathematics at SUSTech, China, for its warm hospitality throughout this project.}

\addcontentsline{toc}{section}{References} 
\bibliographystyle{plain}
\bibliography{references}

\end{document}